\def\R{\mathbb{R}}
\def\a{\chi}
\def\b{\beta}
\def\k{\chi}
\def\ii{\int_0^\infty}
\def\be{\begin{equation}}
\def\ee{\end{equation}}
\newtheorem{lemma}{\bf Lemma}[section]
\newtheorem{theorem}{\bf Theorem}[section]
\newtheorem{proposition}{\bf Proposition}[section]
\newtheorem{remark}{\bf Remark}[section]
\numberwithin{equation}{section}
\newcommand{\abs}[1]{\left\vert#1\right\vert}
\begin{document}
\author{Jose A Carrillo}
\address{Department of Mathematics\\ Imperial College London, London SW7 2AZ, United Kingdom}
\email{carrillo@imperial.ac.uk}

\author{Jingyu Li}
\address{School of Mathematics and Statistics, Northeast Normal University, Changchun 130024, P.R. China}
\email{lijy645@nenu.edu.cn}

\author{Zhian Wang}
\address{Department of Applied Mathematics, Hong Kong Polytechnic University, Hung Hom, Kowloon, Hong Kong}
\email{mawza@polyu.edu.hk}
\title[Existence and stability of boundary spike-layer steady state]{Boundary spike-layer solutions of the singular Keller-Segel system: existence and stability}

\begin{abstract}
We exploit the existence and nonlinear stability of boundary spike/layer solutions of the Keller-Segel system with logarithmic singular sensitivity in the half space, where the physical zero-flux and Dirichlet boundary conditions are prescribed. We first prove that, under above boundary conditions, the Keller-Segel system admits a unique boundary spike-layer steady state where the first solution component (bacterial density) of the system concentrates at the boundary as a Dirac mass and the second solution component (chemical concentration) forms a boundary layer profile near the boundary as the chemical diffusion coefficient tends to zero. Then we show that this boundary spike-layer steady state is asymptotically nonlinearly stable under appropriate perturbations. As far as we know, this is the first result obtained on the global well-posedness of the singular Keller-Segel system with nonlinear consumption rate. We introduce a novel strategy of relegating the singularity, via a Cole-Hopf type transformation, to a nonlinear nonlocality which is resolved by the technique of ``taking antiderivatives'', i.e. working at the level of the distribution function. Then, we carefully choose weight functions to prove our main results by suitable weighted energy estimates with Hardy's inequality that fully captures the dissipative structure of the system.

\vspace{0.2cm}

\noindent
{\sc MSC 2010}: {35A01, 35B40, 35K57, 35Q92, 76D10, 92C17}
\vspace{0.2cm}

\noindent
{\sc Keywords}: Keller-Segel model, Logarithmic singularity, Steady states, Boundary spike/layer, Anti-derivative
\vspace{0.2cm}
\end{abstract}
\maketitle

%
%

\section{Introduction}
In their seminal work \cite{KS}, Keller and Segel proposed the following singular chemotaxis system
\begin{eqnarray} \label{KS}
\left\{
\begin{array}{lll}
u_{t}=u_{xx}-\chi[u(\ln w)_{x}]_{x}, \\[1mm]
w_{t}=\varepsilon w_{xx}-uw^{m},
\end{array}
\right.
\end{eqnarray}
to describe the propagation of traveling bands of chemotactic bacteria observed in the celebrated experiment of Adler \cite{Adler66}, where $u(x,t)$ denotes the bacterial density and $w(x,t)$ the oxygen/nutrient concentration. $\varepsilon\geq 0$ is the chemical diffusion coefficient, $\chi> 0$ denotes the chemotactic coefficient and $m\geq0$ the oxygen consumption rate. The system \eqref{KS} has been well-known as the singular Keller-Segel model nowadays as a cornerstone for the modeling of chemotactic movement in chasing nutrient.

The prominent feature of the Keller-Segel system \eqref{KS} is the use of a logarithmic sensitivity function $\ln w$, which was experimentally verified later in \cite{Kalinin}. This logarithm results in a mathematically unfavorable singularity which, however, has been proved to be necessary to generate traveling wave solutions (cf. \cite{LuiWang}) that were the first kind results obtained for the Keller-Segel system \eqref{KS}.  When $0\leq m<1$, Keller and Segel \cite{KS} have shown that the model (\ref{KS}) with $\varepsilon=0$ can generate traveling bands qualitatively in agreement with the experiment findings of \cite{Adler66}, and later the existence results of traveling wave solutions were extended to any $\varepsilon\geq 0$ and $0\leq m\leq 1$ (cf. \cite{LuiWang, NagaiIkeda, Odell75, Hartmut}), where the wave profile of $(u,w)$ is of (pulse, front) for $0\leq m<1$ and of (front, front) for $m=1$. {When $m>1$, it was proved that the system \eqref{KS} did not admit any type of traveling wave solutions (e.g., see \cite{Hartmut, Wang-TWS-DCDSBrev}). Though the Keller-Segel model \eqref{KS} with $m=1$ can not reproduce the pulsating wave profile to interpret the experiment of \cite{Adler66}, it was later employed to describe the boundary movement of bacterial chemotaxis \cite{Nossal72} and migration of endothelial cells toward the signaling molecule vascular endothelial growth factor (VEGF) during the initiation of angiogenesis (cf. \cite{LSN}).

Aside from the existence of traveling wave solutions, the logarithmic singularity become a source of difficulty in studying the Keller-Segel system \eqref{KS}, such as stability of traveling waves, global well-posedness and so on. When $m=1$, a Cole-Hopf type transformation was cleverly used to remove the singularity, which consequently led to a lot of interesting analytical works, for instance the stability of traveling waves  (cf. \cite{Davis, JLW, LW09, LW10, LW11, LW12, LLW, Chae, Choi2}),   global well-posedness and/or asymptotic behavior of solutions (see \cite{Choi1, GXZZ,Li-Pan-Zhao,peng-ruan-zhu2012global,MWZ,zhang-zhu2007,LPZ,LZ,Wang-Zhao13,TWW} in one dimensional bounded or unbounded space and \cite{LLZ,Hao,DL,PWZ,Rebholz,WXY,LPZ,WWZ} in multidimensional spaces) and boundary layer solutions \cite{HWZ,HLWW,HWJMPA}. However as far as we know no results have been available for the case $m\ne 1$ except the existence of traveling wave solutions as mentioned above. The main issue is that the Cole-Hopf type transformation used to resolving the logarithmic singularity worked effectively for the case $m=1$, but generated new analytical barriers hard to handle.  The purpose of this paper is to develop a novel strategy to break down these barriers and make some progress on the global dynamics (global existence and large-time behavior of solutions) of the singular Keller-Segel system \eqref{KS} for any $m\geq 0$.

We shall consider the Keller-Segel system \eqref{KS} in the half-space $\R_+=[0, \infty)$ with the following initial value
\begin{equation}\label{initial}
(u,w)(x,0)=(u_{0}(x),w_{0}(x)),\ \ x\in\R_+,\end{equation}
and boundary conditions
\begin{equation}\label{bou-cond}
\begin{cases}
(u_{x}-\chi u(\ln w)_{x})(0,t)=0, \ w(0,t)=b, \\
(u,w)(+\infty,t)=(0,0),
\end{cases}
\end{equation}
where $b>0$ is a constant denoting the boundary value of $w(x,t)$. That is we prescribe the zero-flux boundary conditions for $u$ and non-homogeneous Dirichlet boundary condition for $w$. Indeed such boundary conditions as \eqref{bou-cond} have been used in the chemotaxis-fluid model to reproduce the boundary accumulation layers formed by aerobic bacteria in the experiment of \cite{Tuval}. They are also consistent with the experimental conditions of Adler \cite{Adler66} where the nutrient was placed at one end of capillary tube.  It is worthwhile to note that boundary conditions \eqref{bou-cond} are different from Neumann boundary conditions that were often used in the literature for chemotaxis models. Hence no empirical results/methods are directly available for our concerned problem. Indeed with non-homogeneous Dirichlet boundary condition on $w$, the basic $L^2$-estimate becomes elusive in contrast to Neumann boundary conditions. In this paper, we shall develop some new ideas to  establish the existence, uniqueness and stability of steady states to the Keller-Segel system \eqref{KS}-\eqref{bou-cond} with $m\geq 0$. Specifically we show that
\begin{enumerate}
\item[(i)] The problem \eqref{KS}-\eqref{bou-cond} admits a unique non-constant steady state $(U,W)$, where $U$ forms a Dirac mass at the boundary $x=0$ as $\chi \to \infty$ or $\varepsilon \to 0$ and $W$ forms a boundary-layer profile as $\varepsilon \to 0$ (see Theorem \ref{prop2}).
\item[(ii)] The unique boundary spike/-layer steady state $(U,W)$ obtained above is asymptotically stable. Actually, we show that if the initial value $(u_0, w_0)$ is a small perturbation of the steady state $(U,W)$ in some topological sense, then the solution of \eqref{KS}-\eqref{bou-cond} will converge to $(U,W)$ point-wisely as time tends to infinity (see Theorem \ref{thm-1}).
\end{enumerate}
Resorting to the special structure of \eqref{KS} under the boundary conditions \eqref{bou-cond}, we are able to find the explicit steady state solution $(U,W)$ whose asymptotic profile as $\chi \to \infty$ or $\varepsilon \to 0$ can be determined. Therefore the result (i) above can be obtained without too much analytical effort. However, when proving the asymptotic stability of $(U,W)$ stated in (ii), we have to deal with the challenge of the logarithmic singularity. Our new idea of settling this difficulty is to transform the singular Keller-Segel system into a system with a nonlinear nonlocal term via a  Cole-Hopf type transformation (simply speaking we relegate the singularity to a nonlocality).   By fully exploiting the system structure and employing the ``technique of taking antiderivatives'', we convert this nonlocality into an exponential nonlinearity and then prove our desired results via the method of weighted energy estimates by carefully choosing weight functions. As far as we know, the results and ideas described above are new, and we achieve an understanding of the long time asymptotics for the singular Keller-Segel system \eqref{KS} with $m\ne 1$.

Though we consider the singular Keller-Segel system \eqref{KS} with any $m\geq 0$ in one dimension, the ideas developed in this paper may be applicable to multi-dimensional spaces. However, one has to face new difficulties. On one hand, the steady state $(U,W)$ can not be explicitly expressed in multi-dimensions, and on the other hand, the technique of ``taking antiderivatives'' ought to be associated with gradient and/or divergence operators. Moreover, the procedure of carrying out weighted energy estimates with appropriate weight functions will be sophisticated.

The paper is organized as follows. In section \ref{Sect.2}, we shall
derive the explicit formula of spiky-layer steady states, and state the main result of this paper
 on the asymptotic stability of spiky-layer steady states. Section \ref{sec.3}
is devoted to the proofs of our main results.

\section{Boundary spike/layer steady states}\label{Sect.2}
In this section, we first study the steady state problem of system
\eqref{KS}. The steady state can be solved
explicitly, and behaves like a (spike, layer) profile as $\varepsilon$ is small. We then present some
elementary calculations and state our main results on the asymptotic
stability of the spike.

With the zero-flux boundary condition on $u$, we immediately find that the bacterial mass is conserved, namely
\begin{eqnarray} \label{constrain}
\lambda:=\int_0^\infty u(x,t)dx=\int_0^\infty u_0(x)dx
\end{eqnarray}
which can be obtained directly by integrating the first equation of \eqref{KS} over $\R_+$. Therefore hereafter $\lambda>0$ is a prescribed number denoting cell mass.

The steady state of \eqref{KS} satisfying boundary condition \eqref{bou-cond} satisfies
\begin{eqnarray} \label{diff-equ}
\left\{
\begin{array}{lll}
U_{xx}-\chi (U(\ln  W)_x)_x=0,\\[1mm]
\varepsilon  W_{xx}-U W^m=0,\\[1mm]
\int_0^\infty U(x)dx=\lambda>0,
\end{array}
\right.
\end{eqnarray}
with boundary conditions
\begin{eqnarray} \label{boundary}
(U_x-\chi U(\ln  W)_x)(0)=0, \  W(0)=b, \ (U, W)(+\infty)=(0,0).
\end{eqnarray}

We first solve \eqref{diff-equ}-\eqref{boundary} explicitly.
\begin{proposition}\label{prop1}
Assume that $m\geq0$ and $\chi >|1-m|$. Then system
\eqref{diff-equ}-\eqref{boundary} has a unique solution $(U, W)$
satisfying $U'(x)<0$, $ W'(x)<0$, and
\begin{equation}\label{U}
U(x)=\frac{\lambda^{2}(\chi+1-m)^{2}}{2\varepsilon(\chi+m+1)b^{1-m}}
\left(1+\frac{\lambda(\chi+m-1)(\chi+1-m)}{2\varepsilon(\chi+m+1)b^{1-m}}\,
x\right)^{\frac{-2\chi}{\chi+m-1}},\end{equation}
\begin{equation}\label{C}
 W(x)=b\left(1+\frac{\lambda(\chi+m-1)(\chi+1-m)}{2\varepsilon(\chi+m+1)b^{1-m}}\, x\right)^{\frac{-2}{\chi+m-1}}.\end{equation}
\end{proposition}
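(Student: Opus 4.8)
The plan is to reduce the system \eqref{diff-equ}--\eqref{boundary} to a single autonomous second-order ODE for $W$ and integrate it explicitly. First I would integrate the first equation of \eqref{diff-equ}, which states that $U_x-\chi U(\ln W)_x$ is constant in $x$; the zero-flux boundary condition in \eqref{boundary} forces this constant to vanish, so that $U_x=\chi U(\ln W)_x$ on all of $\R_+$. Dividing by $U>0$ and integrating $(\ln U)_x=\chi(\ln W)_x$ yields $U=A W^\chi$ for some constant $A>0$, a relation which also guarantees $U(+\infty)=0$ automatically once $W(+\infty)=0$ is known. Substituting this into the second equation of \eqref{diff-equ} gives the scalar problem $\varepsilon W_{xx}=A W^{\chi+m}$ on $\R_+$ with $W(0)=b$ and $W(+\infty)=0$.

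Next I would extract a first integral. Since the right-hand side $A W^{\chi+m}$ is positive, $W$ is convex; being positive, decreasing and convergent to $0$, it must satisfy $W_x\to 0$ as $x\to\infty$ (otherwise $W$ would be driven to $-\infty$). Multiplying $\varepsilon W_{xx}=A W^{\chi+m}$ by $W_x$ and integrating from $x$ to $\infty$, the boundary terms drop by the decay just noted, and I obtain
\be
\frac{\varepsilon}{2}W_x^2=\frac{A}{\chi+m+1}W^{\chi+m+1},
\ee
where $\chi+m+1>0$ is immediate. Taking the decreasing branch $W_x=-\sqrt{2A/(\varepsilon(\chi+m+1))}\,W^{(\chi+m+1)/2}$ and separating variables, the integral of $W^{-(\chi+m+1)/2}$ produces a power $W^{-(\chi+m-1)/2}$, which is legitimate precisely because $\chi+m-1\neq 0$; integrating from $0$ with $W(0)=b$ then gives the explicit profile \eqref{C}, and $U=A W^\chi$ gives \eqref{U} up to the value of $A$.

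Finally I would fix $A$ by the mass constraint. With $W$ as in \eqref{C}, $U=A W^\chi$ is an explicit power of the form $(1+\alpha x)^{-2\chi/(\chi+m-1)}$, and $\int_0^\infty U\,dx$ converges if and only if the exponent exceeds $1$ in magnitude, i.e. $2\chi/(\chi+m-1)>1$, equivalently $\chi+1-m>0$. This is exactly where the hypothesis $\chi>|1-m|$ enters: it guarantees simultaneously $\chi+m-1>0$ (needed above for the integration and for the signs of the exponents) and $\chi+1-m>0$ (needed here for integrability). Evaluating the resulting elementary integral and solving the single algebraic equation $\int_0^\infty U\,dx=\lambda$ for $A$ yields $A b^\chi=\lambda^2(\chi+1-m)^2/(2\varepsilon(\chi+m+1)b^{1-m})$, which upon substitution reproduces the prefactor and the interior constant appearing in \eqref{U}--\eqref{C}. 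Uniqueness is built into this derivation, since every step — the vanishing of the flux constant, the first integral with zero boundary contribution, the choice of the decreasing branch, and the value of $A$ — is forced by the data; the stated monotonicity $U'(x)<0$, $W'(x)<0$ is then read off from $W_x<0$ and $U=A W^\chi$.

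I expect the only genuinely delicate point to be the rigorous justification that $W_x\to 0$ at infinity, so that the first-integral constant vanishes; the remainder is bookkeeping to match the explicit constants, together with the observation that $\chi>|1-m|$ is the sharp condition separating the convergent from the divergent mass integral.
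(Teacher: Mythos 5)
Your proposal is correct and follows essentially the same route as the paper's proof: integrate the flux equation to get $U=c_0W^{\chi}$, reduce to the scalar ODE $\varepsilon W_{xx}=c_0W^{\chi+m}$, derive the first integral $\tfrac{\varepsilon}{2}W_x^2=\tfrac{c_0}{\chi+m+1}W^{\chi+m+1}$, take the decreasing branch, integrate with $W(0)=b$, and fix the constant from the mass constraint, with $\chi>|1-m|$ entering exactly where you say (positivity of $\chi+m-1$ and integrability via $\chi+1-m>0$). The only difference is cosmetic: you justify $W_x\to 0$ at infinity by the convexity argument, whereas the paper simply asserts it.
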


\begin{proof}
The first equation of \eqref{diff-equ} and the boundary condition
\eqref{boundary} at $x=0$ give
$$ U_{x}=\chi U(\ln  W)_{x}.$$
Then there is a constant $c_{0}>0$ such that
\begin{eqnarray} \label{new1}
U(x)=c_{0} W^{\chi}.
\end{eqnarray}
Substituting \eqref{new1} into the second equation of
\eqref{diff-equ} leads to
$$\varepsilon  W_{xx}=c_{0} W^{\chi+m}.$$
Owing to the second equation of \eqref{diff-equ}, $ W_{xx}\geq0$, and
noting $ W_x(+\infty)=0$, we get
$$ W_x(x)\leq0, \text{ for } x\in[0,\infty).$$
Multiplying this equation by $ W_{x}$, and using the boundary
condition \eqref{boundary} at $x=+\infty$, we have
\begin{equation} \label{new2}
\frac{\varepsilon  W_{x}^{2}}{2}=\frac{c_{0}
W^{\chi+m+1}}{\chi+m+1}.
\end{equation}
It then follows from \eqref{new2} that
$$ W_{x}=-\left(\frac{2c_{0}}{\varepsilon(\chi+m+1)}\right)^{\frac{1}{2}}\, W^{\frac{\chi +m+1}{2}}.$$
For convenience, we denote
$$A:=\left(\frac{2}{\varepsilon(\chi+m+1)}\right)^{\frac{1}{2}},\ r:=\frac{\chi+m-1}{2}>0.$$
Then
\[\frac{1}{r}( W^{-r})_x=Ac_0^{\frac{1}{2}}.\]
This directly yields from \eqref{boundary} that
\begin{eqnarray} \label{newc}
 W(x)=\left(b^{-r}+rAc_{0}^{\frac{1}{2}}\, x\right)^{-\frac{1}{r}}.
\end{eqnarray}
We next determine the value of $c_{0}$. By \eqref{new1} and the
third equation of \eqref{diff-equ}, we have
$$c_{0}\int_0^\infty \left(b^{-r}+rAc_{0}^{\frac{1}{2}}\, x\right)^{-\frac{\chi}{r}}dx=\lambda.$$
Note that $-\frac{\chi}{r}+1=-\frac{\chi+1-m}{\chi+m-1}<0$ (due to $\chi>|1-m|$) gives
$-\frac{\chi}{r}<-1$. Then a simple computation yields
$$c_{0}=\frac{\lambda^{2}(\chi+1-m)^{2}}{2\varepsilon(\chi+m+1)b^{\chi+1-m}}.$$
Now substituting $c_{0}$ into \eqref{newc} and \eqref{new1}, we get
\eqref{U} and \eqref{C}, and thus finish the proof.
\end{proof}

Next we derive the asymptotic profile of the unique steady state $(U,W)$ given by formulas \eqref{U} and \eqref{C}, which turns out that the bacteria density $U$ forms a boundary spike as $\chi \to \infty$ or $\varepsilon \to 0$ and $W$ forms a boundary layer as $\varepsilon \to 0$.
\begin{theorem}\label{prop2}
Let $m\geq 0$ and $\chi>|1-m|$ and $(U, W)$ be the unique solution of \eqref{diff-equ}-\eqref{boundary} obtained in Proposition
\ref{prop1}. Then the following results hold.
\begin{enumerate}
\item[(i)] As $\chi\rightarrow\infty$, $U$
concentrates at $x=0$ and $W$ converges to the boundary value $b$ on any bounded interval. That is
\[\begin{split}
&U(x)\rightarrow\lambda\delta(x) \text{ in the sense of distribution as} \ \chi\rightarrow\infty,
\\& W(x)\rightarrow b \text{ uniformly in} \ [0,N] \ \text{for any} \ 0<N<\infty \ \text{as}\ \chi\rightarrow\infty.
\end{split}\]
\item[(ii)] As $\varepsilon \to 0$, $U$  concentrates at $x=0$ and $W(x)$ forms a (boundary) layer near $x=0$. Namely
$$
U(x)\rightarrow\lambda\delta(x) \text{ in the sense of distribution as}\ \varepsilon \to 0
$$
and there is a constant $\eta=\eta(\varepsilon)$ satisfying $\varepsilon/\eta(\varepsilon)\to 0$ as $\varepsilon \to 0$ such that
$$
\lim\limits_{\varepsilon\rightarrow0}\| W\|_{L^\infty[\eta, \infty]}=0, \ \ \ \liminf\limits_{\varepsilon\rightarrow 0} \|
W\|_{L^\infty[0,\infty)}>0.
$$
\end{enumerate}
\end{theorem}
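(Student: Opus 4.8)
The plan is to read off the two relevant scales directly from the explicit formulas \eqref{U}--\eqref{C} and then recognize $U$ as an approximate identity. First I would introduce the abbreviations
\[
\alpha:=\frac{\lambda(\chi+m-1)(\chi+1-m)}{2\varepsilon(\chi+m+1)b^{1-m}},\qquad p:=\frac{\chi+1-m}{\chi+m-1}>0,
\]
so that $\tfrac{2\chi}{\chi+m-1}=1+p$ and the formulas collapse to
\[
U(x)=\alpha\lambda p\,(1+\alpha x)^{-(1+p)},\qquad W(x)=b\,(1+\alpha x)^{-\frac{2}{\chi+m-1}}.
\]
A one-line computation confirms $\int_0^\infty U\,dx=\lambda$, consistent with \eqref{constrain}, while $U$ attains its maximum $\alpha\lambda p$ at $x=0$ and both $U,W$ are decreasing, matching Proposition \ref{prop1}. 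The crucial observation is that $\alpha\to\infty$ in both regimes ($\chi\to\infty$ and $\varepsilon\to0$), so $1/\alpha$ is the spike/layer length scale.

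For the concentration of $U$ I would exploit that it is a mollifier of fixed mass $\lambda$ with shrinking width $1/\alpha$. For a test function $\phi$, write $\int_0^\infty U\phi\,dx=\lambda\phi(0)+\int_0^\infty U(x)\bigl(\phi(x)-\phi(0)\bigr)\,dx$, rescale $y=\alpha x$ to obtain
\[
\int_0^\infty U(x)\bigl(\phi(x)-\phi(0)\bigr)\,dx=\lambda p\int_0^\infty(1+y)^{-(1+p)}\bigl(\phi(y/\alpha)-\phi(0)\bigr)\,dy,
\]
and let $\alpha\to\infty$. Since $1+p>1$, the kernel $(1+y)^{-(1+p)}$ is integrable uniformly over the relevant parameter range (in case (i) one has $p\to1$, so $(1+y)^{-(1+p)}\le(1+y)^{-3/2}$ for $\chi$ large), and dominated convergence together with the continuity of $\phi$ at the origin yields $U\to\lambda\delta$ in the distributional sense in both cases.

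For the profile of $W$ in case (i) I would argue by monotonicity: on $[0,N]$ one has $\|W-b\|_{L^\infty[0,N]}=b-W(N)$, and since $\alpha\sim \lambda\chi/(2\varepsilon b^{1-m})$ while the exponent $2/(\chi+m-1)\to0$, taking logarithms gives $\tfrac{2}{\chi+m-1}\ln(1+\alpha N)=O(\ln\chi/\chi)\to0$, whence $W(N)\to b$ and the convergence is uniform on $[0,N]$. For the boundary layer in case (ii) the upper norm $\|W\|_{L^\infty[0,\infty)}=W(0)=b$ is constant, giving the $\liminf$ bound at once, while $\|W\|_{L^\infty[\eta,\infty)}=W(\eta)=b(1+\alpha\eta)^{-2/(\chi+m-1)}$. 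Here $\alpha=C/\varepsilon$ with $C$ an $\varepsilon$-independent constant (and $\chi$ fixed), so the decay length of $W$ is $O(\varepsilon)$; I would therefore take $\eta=\sqrt{\varepsilon}$, for which $\varepsilon/\eta=\sqrt{\varepsilon}\to0$ while $\alpha\eta=C/\sqrt{\varepsilon}\to\infty$, forcing $W(\eta)\to0$.

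All computations are elementary, so there is no deep analytic obstacle. The points demanding the most care are (a) correctly calibrating $\eta$ in case (ii) so that it lies strictly outside the $O(\varepsilon)$ layer yet still collapses to the boundary — precisely the role of the constraint $\varepsilon/\eta\to0$ — and (b) justifying the limit in the distributional pairing uniformly as the exponent $p$ itself varies in case (i), which the explicit dominating kernel $(1+y)^{-3/2}$ handles.
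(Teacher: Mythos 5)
Your proposal is correct, and it follows the same overall strategy as the paper: exploit the explicit formulas, identify $1/\alpha$ as the spike/layer width, use monotonicity of $W$ to reduce the sup-norm statements to the single values $W(N)$, $W(\eta)$ and $W(0)=b$, and take $\eta=\varepsilon^{1/2}$ (the paper takes $\eta=O(\varepsilon^{a})$ with $0<a<1$, which contains your choice). The one place you genuinely diverge is the proof that $U\to\lambda\delta(x)$. The paper splits the pairing $\int_0^\infty U(x)\bigl(\zeta(x)-\zeta(0)\bigr)\,dx$ into $[0,h]$ and $[h,\infty)$: on $[h,\infty)$ it shows $U\to0$ uniformly and applies dominated convergence, while on $[0,h]$ it uses the Lipschitz bound $|\zeta(x)-\zeta(0)|\le C_0x$ together with the mass bound $\int_0^h U\,dx\le\lambda$ to get an $O(h)$ error, and finally lets $h\to0$. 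You instead rescale $y=\alpha x$ to exhibit $U$ as a fixed-mass approximate identity $\lambda p\,(1+y)^{-(1+p)}$ and apply dominated convergence once. Your route is somewhat cleaner and handles the two limits $\chi\to\infty$ and $\varepsilon\to0$ by a single computation; the only point requiring care --- which you correctly flag --- is that in case (i) the exponent $p$ varies with $\chi$, so one needs the uniform dominating kernel $(1+y)^{-3/2}$ (valid for large $\chi$ since $p\to1$) before invoking dominated convergence. The paper's splitting avoids any parameter-dependent domination, at the cost of the extra limit $h\to0$. Both arguments are elementary and complete, and your treatment of the $W$-profiles (monotonicity plus the logarithmic estimate $\tfrac{2}{\chi+m-1}\ln(1+\alpha N)\to0$ in case (i), and $\alpha\eta\to\infty$ in case (ii)) coincides with the paper's in substance.
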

\begin{proof}
We first prove (i). For any $\zeta(x)\in C_0^\infty[0,\infty)$ and any $h>0$, we have
\begin{equation}\label{2.9}
\begin{split}\ii U(x)\zeta(x)dx-\lambda\zeta(0)&=\ii U(x)(\zeta(x)-\zeta(0))dx\\
&=\int_0^h U(x)(\zeta(x)-\zeta(0))dx+\int_h^\infty
U(x)(\zeta(x)-\zeta(0))dx.\end{split}\end{equation}
On  one hand, for any $x>0$ we can rewrite \eqref{U} as
\begin{equation*}
\begin{split}
U(x)=\frac{\lambda^{2}}{2\varepsilon
b^{1-m}}\,\frac{(1+\frac{1-m}{\chi})^{2}}{1+\frac{m+1}{\chi}}\,\left(\frac{1}{\chi
x} +\frac{\lambda}{2\varepsilon
b^{1-m}}\,\frac{1-\frac{(1-m)^2}{\chi^2}}{1+\frac{m+1}{\chi}}\right)^{\frac{-2\chi}{\chi+m-1}}\, \frac{\chi^{-1-\frac{2(1-m)}{\chi+m-1}}}{
x^{\frac{2\chi}{\chi+m-1}}}.\end{split}
\end{equation*}
It is easy to see that $U(x)\rightarrow0$ uniformly on $[h,\infty)$
as $\chi\rightarrow\infty$. It then follows from Lebesgue
Dominated Convergence Theorem that
\[\int_h^\infty U(x)(\zeta(x)-\zeta(0))dx\rightarrow0 \text{ as } \chi\rightarrow\infty.\]
On the other hand, since $\zeta(x)\in C_0^\infty[0,\infty)$, there is a constant $C_0$ such that $|\zeta(x)-\zeta(0)|=|\zeta'(\theta)|x\leq C_0x$. Thus it follows that
\[\left|\int_0^h U(x)(\zeta(x)-\zeta(0))dx\right|\leq C_0\int_0^h xU(x)dx \leq C_0\lambda h.\]
It hence follows from \eqref{2.9}  that
\[\underset{\chi\rightarrow\infty}{\overline{\lim}}\left|\ii U(x)\zeta(x)dx-\lambda\zeta(0)\right|\leq C_0\lambda h, \ \forall h>0,\]
which implies
\[U(x)\rightarrow\lambda\delta(x) \text{ as }\chi\rightarrow\infty.\]
To derive the limit of $ W(x)$, we note that there exists a constant
$C_1>0$, such that for all $x\in[0,N]$ and large $\chi$, it holds that
\[\begin{split}
1&\leq\left(1+\frac{\lambda(\chi+m+1)(\chi+1-m)}{2\varepsilon(\chi+m+1)b^{1-m}}\,
x\right)^{\frac{1}{\chi+m-1}}=\left(1+\frac{\lambda}{2\varepsilon
b^{1-m}}\,\frac{1-\frac{(1-m)^2}{\chi^2}}{1+\frac{m+1}{\chi}}\,
\chi
x\right)^{\frac{1}{\chi+m-1}}\\&\leq(1+C_1N\chi)^{\frac{1}{\chi+m-1}} \rightarrow1
\text{ as }\chi\rightarrow\infty.\end{split}\] This implies $
W(x)\rightarrow b$ uniformly on any bounded interval.

Next we prove (ii). To this end,  we rewrite $(U,W)(x)$ as
\begin{equation}\label{UWnew}
U(x)=\frac{\theta \sigma \xi}{2\chi \varepsilon} \Big(1+\frac{\sigma}{\varepsilon}x\Big)^{-\xi}, \ W(x)=b\Big(1+\frac{\sigma}{\varepsilon}x\Big)^{-\xi/\chi}
\end{equation}
with $\theta=\lambda(\chi+1-m)>0, \ \sigma=\frac{\lambda(\chi+1-m)(\chi+m-1)}{2(\chi+m+1)b^{1-m}}>0, \ \xi=\frac{2\chi}{\chi+m-1}$.
Note that $\xi>1$ since $\chi>|1-m|$. Then one can verify that $U(x)\rightarrow0$ uniformly on $[h,\infty)$
as $\varepsilon\rightarrow\infty$ for $h>0$. By the same argument as proving case (i), we have that $U(x)\rightarrow\lambda\delta(x) \text{ in the sense of distribution as}\ \varepsilon \to 0$. Now we proceed to prove $W(x)$ forms a boundary layer near $x=0$. Indeed it can be directly checked from \eqref{UWnew} that for $\eta(\varepsilon)=O(\varepsilon^\alpha)$ with $0<\alpha <1$, $W(x) \to 0$ uniformly on $[\eta(\varepsilon), \infty)$ as $\varepsilon \to 0$ (namely $\lim\limits_{\varepsilon\rightarrow0}\|W\|_{L^\infty[\eta(\varepsilon), \infty]}=0$).
On the other hand, it is obvious that $\liminf\limits_{\varepsilon\rightarrow 0} \|W\|_{L^\infty[0,\infty)}=b>0$. This implies $W(x)$ develops a boundary layer on $[0,\eta(\varepsilon)]$ as $\varepsilon \to 0$ and hence completes the proof.
\end{proof}
To illustrate our results, we numerically plot the asymptotic profiles of $(U,W)$ in Fig.\ref{fig1} for $\chi \to \infty$ and in Fig.\ref{fig2} for $\varepsilon \to 0$.  From Fig.\ref{fig1}, we see that the value of $U(0)$ increases as $\chi$ increases and $U$ behaves like a spike (Dirac delta function) concentrating at the boundary $x=0$, while $W$ is elevated towards the boundary value $b=1$ as $\chi$ increases. This verifies the results of Theorem \ref{prop2}(i). Fig.\ref{fig2} demonstrates the asymptotic profile of $U$ and $W$ as $\varepsilon$ decreases to zero, where we observe that $U$ tends to aggregate at the boundary $x=0$ like a Dirac delta function while $W$ tends to vanish in the interior of the domain (outer-layer region) but remains positive  in the region close to the boundary $x=0$ (inner-layer region) as $\varepsilon$ decreases. In particular, the slope of curve $W$ becomes increasingly steeper at $x=0$ as $\varepsilon$ decreases. This implies that $W(x)$ develops a boundary layer profile as $\varepsilon$ is small, which is well consistent with the results of Theorem \ref{prop2}(ii).

\begin{figure}[htbp]
\centering
\includegraphics[width=7.5cm,angle=0]{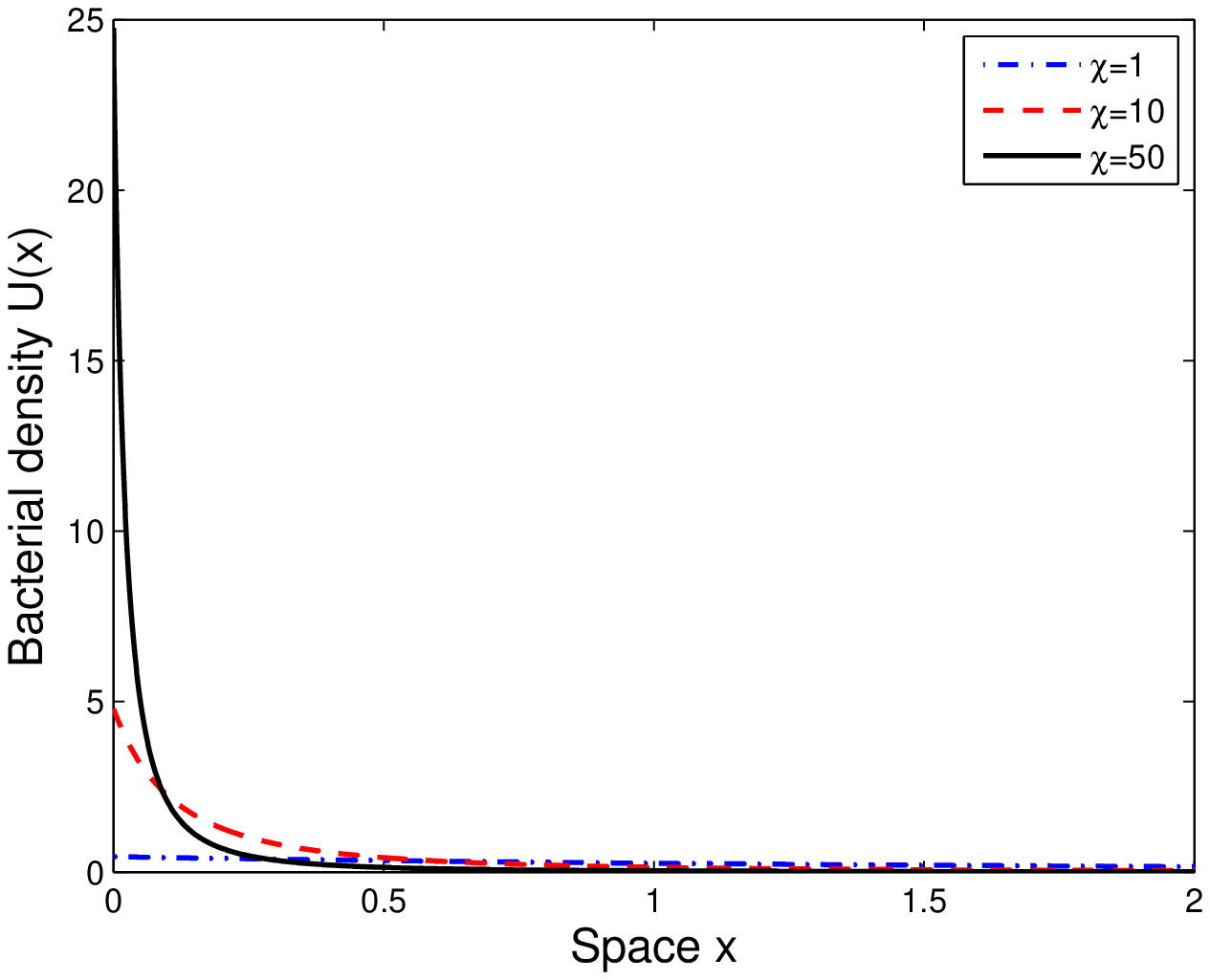}
\includegraphics[width=7.5cm,angle=0]{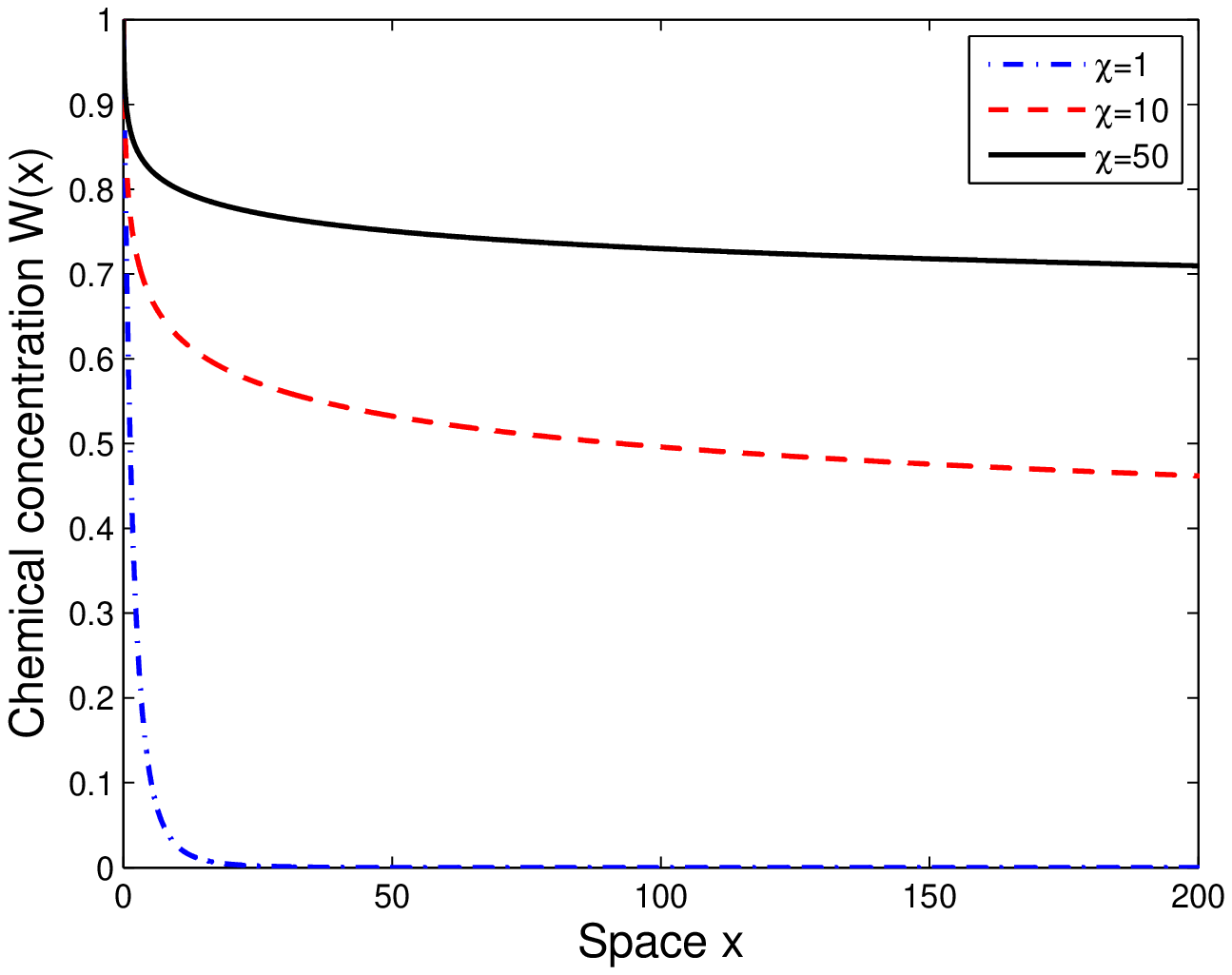}
\caption{Profiles of steady state $(U,W)(x)$ with $b=\lambda=\varepsilon=1$, $m=0.5$ for different values of $\chi>0$.}
\label{fig1}
\end{figure}

\begin{figure}[htbp]
\centering
\includegraphics[width=7.5cm,angle=0]{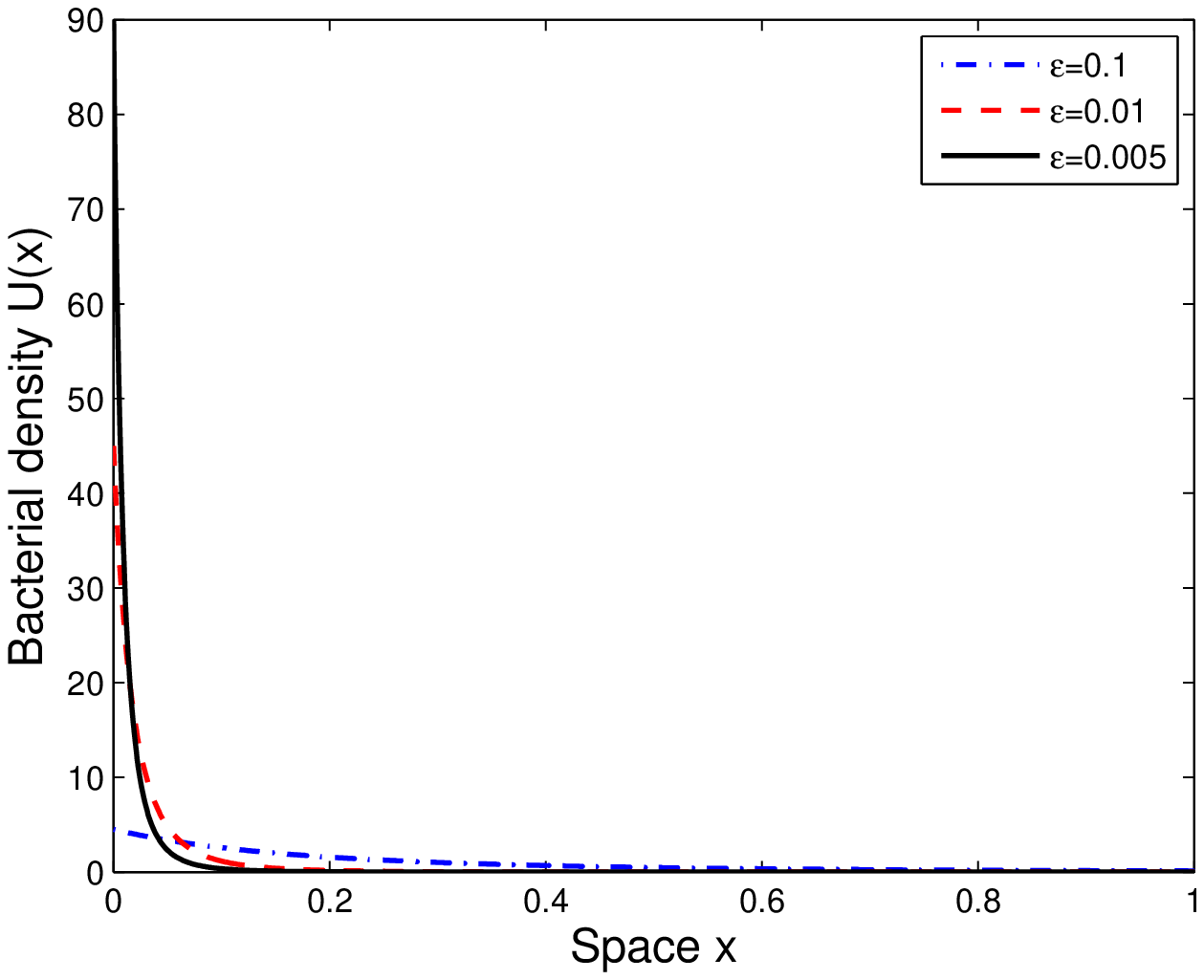}
\includegraphics[width=7.5cm,angle=0]{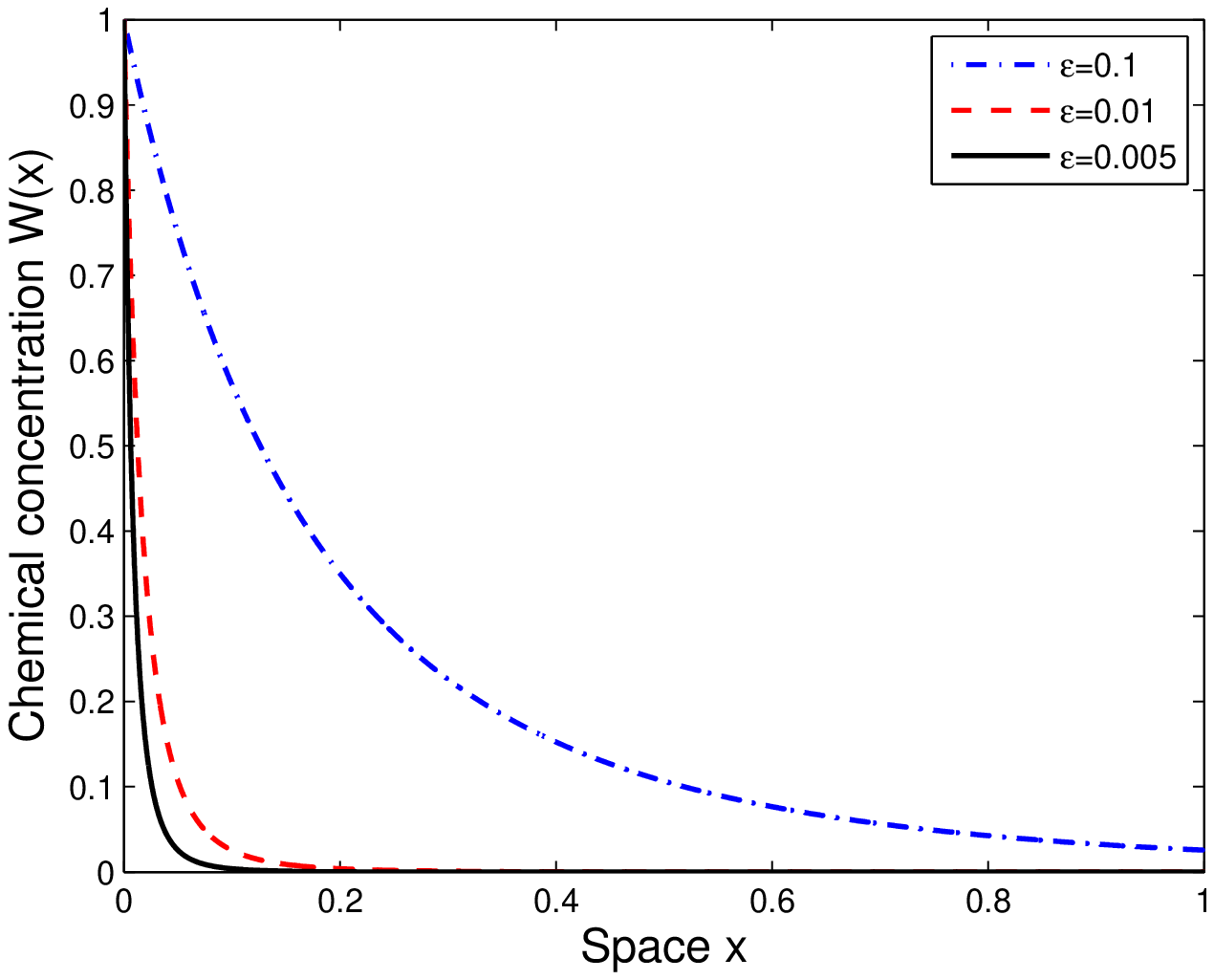}
\caption{Asymptotic profiles of steady state $(U,W)(x)$ with respect to $\varepsilon>0$, where $b=\lambda=\chi=1$, $m=0.5$.}
\label{fig2}
\end{figure}

We next study the asymptotic stability of the steady state $(U,W)$ to the system
\eqref{KS}-\eqref{constrain}. Because the chemical concentration
$w(x,t)$ has a vacuum end state, the first equation of Keller-Segel system \eqref{KS}
encounters a singularity at $x=\infty$ which makes a very difficult task to work with \eqref{KS} directly. To overcome such difficulty, we employ a Cole-Hopf type transformation
\begin{equation} \label{trans}
v:=-\frac{w_{x}}{w},\text { i.e. }(\ln w)_{x}=-v,
\end{equation}
which gives
\begin{equation}\label{eqc}
w(x,t)=be^{-\int_0^x v(y,t)dy}
\end{equation}
due to \eqref{trans} and boundary condition $w(0,t)=b$, and hence transforms system \eqref{KS} into a nonlocal parabolic-parabolic system of conservation laws as follows
\begin{eqnarray}\label{newuv}
\left\{
\begin{array}{lll}
u_{t}=u_{xx}+\chi(uv)_{x}, & \ (x,t)\in\R_+\times\R_+\\[1mm]
v_{t}=\varepsilon v_{xx}-(\varepsilon v^{2}-uw^{m-1})_{x}, & \ (x,t)\in\R_+\times\R_+\\[1mm]
w(x,t)=be^{-\int_0^x v(y,t)dy},\\[1mm]
(u,v)(x,0)=(u_{0}(x),v_{0}(x))
\end{array}
\right.
\end{eqnarray}
where $v_0=\frac{w_{0x}}{w}$. Before proceeding, we should remark that although the singularity is removed via the Cole-Hopf transformation \eqref{trans}, the price we pay is that the transformed system \eqref{newuv} has a nonlocal term and quadratic advection term which also bring tremendous difficulty to mathematical analysis.  However in the case $m=1$, the nonlocal term naturally vanishes and the system \eqref{newuv} becomes more tractable. There have been a large amount of results available to \eqref{newuv} with $m=1$ as recalled in the Introduction. We particulary remark that when Dirichlet boundary conditions are imposed to \eqref{newuv} with $m=1$, the existence and stability of boundary layer solutions have been shown recently in \cite{HWZ,HLWW,HWJMPA} where, however, the original Keller-Segel system \eqref{KS} was found to have no boundary layer solutions when reversing the results of \eqref{newuv} to $v$ via \eqref{trans}.  In this paper, we shall consider entirely different boundary conditions so that boundary spike and layer solutions can develop from the Keller-Segel system \eqref{KS} for any $m\geq 0$. When $m\ne 1$, then the second equation of \eqref{newuv} contains an advection including both quadratic nonlinearity and a nonlocal term, which leads to a very challenging problem.  As far as we know, there was not any result available for \eqref{newuv} with $m\ne 1$. In this paper, we shall develop some novel ideas to exploit the system \eqref{newuv} and hence obtain the first results on the original Keller-Segel model \eqref{KS} with $m\neq 1$ subject to the boundary condition \eqref{bou-cond} by studying the transformed nonlocal system  \eqref{newuv}. Next to state our main results, we derive the boundary conditions of $v(x,t)$. The
second equation of \eqref{KS} also gives
\begin{equation*}
\begin{split}
(\ln w)_{t}=\varepsilon\left(\frac{w_{x}}{w}\right)_{x}+\varepsilon\left(\frac{w_{x}}{w}\right)^{2}-uw^{m-1}
= -\varepsilon v_{x}+\varepsilon v^{2}-uw^{m-1}.
\end{split}
\end{equation*}
Because $b$ is a constant, for smooth solutions $(\ln w)_{t}=0\text{
at }x=0$, it then follows that
$$\varepsilon v_{x}-(\varepsilon v^{2}-uw^{m-1})=0\text{ at }x=0.$$
Denote by $(U,V)(x)$ the steady state of \eqref{newuv} where $U(x)$ is explicitly given in \eqref{U}.  Then by
\eqref{trans} and Proposition \ref{prop1}, we find $V$ given as
\begin{equation*}\label{V}
V(x)=-\frac{W_x}{W}=\frac{\lambda(\chi+1-m)}{\varepsilon
(\chi+m+1)b^{1-m}}\,\left(1+\frac{\lambda(\chi+m-1)(\chi+1-m)}{2\varepsilon
(\chi+m+1)b^{1-m}}\, x \right)^{-1}.
\end{equation*}
It can be easily verified that
\[V(x)\rightarrow0 \text{ as }x\rightarrow+\infty.\]
Since we are devoted to proving that $v(x,t) \to V(x)$ as $t \to \infty$, the following  condition is naturally imposed:
$v(+\infty,t)=0$,
which requires that $\frac{w_x}{w} \to 0$ as $x \to \infty$. Therefore the boundary conditions for \eqref{newuv} relevant to \eqref{bou-cond} is
\begin{eqnarray}\label{bound2}
\begin{cases}
u_{x}+\chi uv=\varepsilon v_{x}-(\varepsilon
v^{2}-uw^{m-1})=0, & x=0\\
 (u,v)\to (0,0), & x \to \infty.
\end{cases}
\end{eqnarray}
From Proposition \ref{prop1}, one can check that $(U,V)(x)$ is a unique steady state of \eqref{newuv}-\eqref{bound2}. In the following, we shall focus on attention to study the well-posedness and asymptotic behavior of solutions to the initial-boundary value problem \eqref{newuv}-\eqref{bound2} when the initial value $(u_0,v_0)$ is a small perturbation of $(U,V)(x)$.

Because the steady state has a vacuum end state which leads to a singularity in the energy estimates, as to be seen later, we have to study its
stability in carefully selected weighted functional spaces to resolve the singularity, where the weights
depend on the range of $m$. To state our results more precisely, we
denote by $H^k(k\geq0)$ the usual Sobolev space whose norm is
abbreviated as $\|f\|_k:=\sum\limits_{j=0}^{k}\|\partial_x^jf\|$
with $\|f\|:=\|f\|_{L^2(\mathbb{R}_+)}$, and $H^k_\omega$ denotes
the weighted Sobolev space of measurable function $f$ such that
$\sqrt{\omega}\partial_x^jf\in L^2(\mathbb{R}_+)$ for $0\leq j\leq
k$ with norm
$\|f\|_{k,\omega}:=\sum\limits_{j=0}^{k}\|\sqrt{\omega}\partial_x^jf\|$
and $\|f\|_\omega:=\|\sqrt{\omega}f\|_{L^2(\mathbb{R}_+)}$.\\

Our main results are stated as follows.

\begin{theorem}\label{thm-1}
Assume that $m\geq0$ and that $\chi>|1-m|$. Let $(U,V)$ be
the unique steady state of system \eqref{newuv}-\eqref{bound2}.
Assume that the initial perturbation around $(U,V)$ satisfies
$\phi_0(\infty)=\psi_0(\infty)=0$ where
\[(\phi_0,\psi_0)(x)=\int_0^x (u_{0}(y)-U(y),v_{0}(y)-V(y))dy.\]

\begin{enumerate}

\item If $m\geq1$, then there exists a constant $\delta_0>0$ such that if
$\|\phi_{0}\|^2_{1,w_1}+\|\psi_{0}\|^2_{1,w_2}+\|\phi_{0xx}\|^2+\|\psi_{0xx}\|^2\leq
\delta_0$, where $w_1=1/U$ and $w_2= W^{1-m}$, then the system
\eqref{newuv}-\eqref{bound2} has a unique global solution
$(u,v)(x,t)$ satisfying
\begin{equation}\label{regularity}
\begin{cases}
u-U\in C([0,\infty); H^1\cap L^2_{w_1})\cap L^2((0,\infty);H^2\cap H^1_{w_1}),\\
v-V\in C([0,\infty); H^1\cap L^2_{w_2})\cap L^2((0,\infty);H^2\cap
H^1_{w_2}).
\end{cases}\end{equation}

\item If $0\leq m<1$ and $\chi\gg1$, then there exists a constant $\delta_1>0$ such that if
$\|\phi_{0}\|^2_{1,w_3}+\|\phi_{0xx}\|^2+\|\psi_{0}\|^2_{2}\leq
\delta_1$, where $w_3=W^{m-1}/U$, then the system
\eqref{newuv}-\eqref{bound2} has a unique global solution
$(u,v)(x,t)$ satisfying
\begin{equation}\label{2.18}
\begin{cases}
u-U\in C([0,\infty); H^1\cap L^2_{w_3})\cap L^2((0,\infty);H^2\cap H^1_{w_3}),\\
v-V\in C([0,\infty); H^1)\cap L^2((0,\infty);H^2).
\end{cases}\end{equation}
\item In both cases (1) and (2) above, we have the following asymptotic convergence:
\begin{equation}\label{asym}
\sup\limits_{x\in\R_+}\abs{(u,v)(x,t)-(U,V)(x)}\to 0~~\text{as}~~
t\to+\infty,\\
\end{equation}
and
\begin{equation}\label{L1}
\|u(\cdot,t)-U(\cdot,t)\|_{L^1(\R_+)}\to 0 \ \text{as}  \ t\to+\infty.
\end{equation}
\end{enumerate}

\end{theorem}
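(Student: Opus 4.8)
The plan is to recast the stability problem at the level of antiderivatives and then run a continuity argument built on weighted energy estimates. Set
\[
\phi(x,t)=\int_0^x\bigl(u(y,t)-U(y)\bigr)\,dy,\qquad \psi(x,t)=\int_0^x\bigl(v(y,t)-V(y)\bigr)\,dy,
\]
so that $\phi_x=u-U$, $\psi_x=v-V$, and the mass conservation \eqref{constrain} together with $\phi_0(\infty)=\psi_0(\infty)=0$ guarantees $\phi(\infty,t)=\psi(\infty,t)=0$, while the boundary data \eqref{bound2} force $\phi(0,t)=\psi(0,t)=0$. The decisive simplification is that $w=be^{-\int_0^x v}=We^{-\psi}$, so the singular nonlocal factor becomes $w^{m-1}=W^{m-1}e^{-(m-1)\psi}$; that is, the logarithmic singularity of \eqref{KS} is relegated to a genuine exponential nonlinearity in $\psi$. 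Integrating the two equations of \eqref{newuv} once in $x$ and using that the boundary terms at $x=0$ cancel against the steady-state relations (in particular $\varepsilon V_x-\varepsilon V^2+UW^{m-1}=0$), one arrives at the closed perturbation system
\begin{align}
\phi_t&=\phi_{xx}+\chi V\phi_x+\chi U\psi_x+\chi\phi_x\psi_x,\label{plan-phi}\\
\psi_t&=\varepsilon\psi_{xx}-2\varepsilon V\psi_x-\varepsilon\psi_x^2+W^{m-1}e^{-(m-1)\psi}\phi_x+UW^{m-1}\bigl(e^{-(m-1)\psi}-1\bigr),\label{plan-psi}
\end{align}
with homogeneous Dirichlet data at both ends. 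Note that when $m=1$ the last two terms in \eqref{plan-psi} collapse and the singularity disappears, recovering the classically tractable case.

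Next I would establish local-in-time well-posedness of \eqref{plan-phi}--\eqref{plan-psi} in the weighted spaces dictated by \eqref{regularity} (resp. \eqref{2.18}) by a standard parabolic fixed-point/contraction argument, and then reduce global existence to a uniform a priori bound. Introduce the solution functional
\[
N(t)^2=\sup_{0\le s\le t}\Bigl(\|\phi\|_{1,w_1}^2+\|\psi\|_{1,w_2}^2+\|\phi_{xx}\|^2+\|\psi_{xx}\|^2\Bigr)(s)
\]
for $m\ge1$ (and the analogue with $w_1\to w_3$ and $\|\psi\|_{1,w_2}\to\|\psi\|_2$ for $0\le m<1$). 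The heart of the proof is to derive an inequality of the form $N(t)^2+\int_0^tD(s)\,ds\le C N(0)^2+C\,N(t)\bigl(N(t)^2+\int_0^tD(s)\,ds\bigr)$, where the dissipation functional $D$ collects $\|\phi_x\|_{w_i}^2$, $\|\phi_{xx}\|^2$, $\|\psi_x\|^2$ and $\|\psi_{xx}\|^2$; smallness of $N(0)$ then closes the estimate, yielding both the uniform bound and $\int_0^\infty D<\infty$, hence global existence by continuation together with the regularity \eqref{regularity}, \eqref{2.18}.

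The main obstacle is the derivation of these weighted estimates, because the steady state has a vacuum end state ($U,W\to0$ as $x\to\infty$) which makes the natural energy coefficients singular. I would test \eqref{plan-phi} against $\phi/U$ (the weight $w_1$, or $W^{m-1}/U=w_3$ when $m<1$) and \eqref{plan-psi} against $W^{1-m}\psi$ (the weight $w_2$, or simply $\psi$ when $m<1$), the weights being chosen so that the dangerous linear coupling $\chi U\psi_x$ and the exponential term are turned into coercive, sign-definite dissipation: for instance $W^{1-m}$ exactly cancels the $W^{m-1}$ prefactor so that $UW^{m-1}(e^{-(m-1)\psi}-1)$ linearizes to the damping $-(m-1)U\psi^2$, which is favorable precisely when $m\ge1$. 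Hardy's inequality is the essential device absorbing the resulting singular-weight terms near $x=\infty$ into the second-order dissipation, and the cross terms arising between the two energy identities must be cancelled against each other through the matched weights. The exponential nonlinearity and the quadratic term $\varepsilon\psi_x^2$ are handled a posteriori: a priori smallness of $N(t)$ controls $\|\psi\|_{L^\infty}$ via the one-dimensional embedding $\|\psi\|_{L^\infty}^2\le 2\|\psi\|\,\|\psi_x\|$, so that $e^{-(m-1)\psi}-1$ may be expanded and bounded by $C|\psi|$, closing the bootstrap. This is also where the dichotomy enters: for $0\le m<1$ the linearized exponential term has the wrong sign, and dominating it requires the dissipation to be strengthened, which is achieved only for $\chi\gg1$, forcing the modified weight $w_3$ and the hypothesis of part (2).

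Finally, for the asymptotics I would upgrade $\int_0^\infty D<\infty$ to decay. The uniform bounds make $\tfrac{d}{dt}D(t)$ integrable, so the standard argument gives $D(t)\to0$; in particular $\|\phi_x\|+\|\psi_x\|\to0$ while $\|\phi_{xx}\|,\|\psi_{xx}\|$ stay bounded, and the Agmon/Gagliardo--Nirenberg inequality $\|f\|_{L^\infty}^2\le 2\|f\|\,\|f_x\|$ then yields $\sup_x|(u,v)(x,t)-(U,V)(x)|=\sup_x\bigl(|\phi_x|,|\psi_x|\bigr)\to0$, which is \eqref{asym}. For \eqref{L1}, Cauchy--Schwarz against the weight gives $\|u-U\|_{L^1}=\int_0^\infty|\phi_x|\,dx\le\bigl(\int_0^\infty U\,dx\bigr)^{1/2}\|\phi_x\|_{w_1}=\sqrt{\lambda}\,\|\phi_x\|_{w_1}$ when $m\ge1$ (and $\le(b^{1-m}\lambda)^{1/2}\|\phi_x\|_{w_3}$ when $m<1$), and since the weighted first-order quantity belongs to $D$ and hence tends to $0$, \eqref{L1} follows.
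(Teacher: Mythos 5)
Your plan reproduces the paper's proof essentially verbatim: the same antiderivative reformulation with $w=We^{-\psi}$ turning the nonlocality into an exponential nonlinearity, the same weighted energy method with weights $1/U$ and $W^{1-m}$ for $m\geq 1$ (resp. $W^{m-1}/U$ and $1$ for $0\leq m<1$, where Hardy's inequality and $\chi\gg1$ compensate the wrong-signed linearized term), and the same convergence arguments ($W^{1,1}$-in-time integrability plus the Agmon-type inequality for \eqref{asym}, weighted Cauchy--Schwarz against $\int_0^\infty U\,dx=\lambda$ for \eqref{L1}). The only detail to fix in execution is that the $\psi$-equation must be tested against $\chi W^{1-m}\psi$ (resp. $\chi\psi$), not $W^{1-m}\psi$: the factor $\chi$ is precisely what makes the linear cross terms $\chi\int U\psi_x\,\phi/U$ and $\chi\int \phi_x\psi$ cancel exactly after integration by parts, rather than leave a residual term of size $(\chi-1)$ that the dissipation cannot absorb for large $\chi$.
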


By using the Cole-Hopf transformation \eqref{trans}, we transfer
Theorem \ref{thm-1} to the original Keller-Segel system
\eqref{KS}-\eqref{bou-cond}.

\begin{theorem}\label{thm2}
Assume that $m\geq0$ and that $\chi>|1-m|$. Let $(U,W)$ be
the unique steady state of  \eqref{KS}-\eqref{bou-cond}.
Assume that the initial perturbation  satisfies
$\phi_0(\infty)=\psi_0(\infty)=0$ where
\[\phi_0(x)=\int_0^x (u_{0}(y)-U(y))dy, \ \psi_0(x)=-\ln w_0(x)+\ln W(x).\]

\begin{enumerate}

\item If $m\geq1$, then there exists a constant $\delta_2>0$ such that if
$\|\phi_{0}\|^2_{1,w_1}+\|\psi_{0}\|^2_{1,w_2}+\|\phi_{0xx}\|^2+\|\psi_{0xx}\|^2\leq
\delta_0$, then the system \eqref{KS}-\eqref{bou-cond} has a unique
global solution $(u,w)(x,t)$ satisfying
\begin{equation*}\label{new-regularity}
\begin{cases}
u-U\in C([0,\infty); H^1\cap L^2_{w_1})\cap L^2((0,\infty);H^2\cap H^1_{w_1}),\\
w-W\in C([0,\infty); H^1)\cap L^2((0,\infty);H^2).
\end{cases}\end{equation*}

\item If $0\leq m<1$ and $\chi\gg1$, then there exists a constant $\delta_3>0$ such that if
$\|\phi_{0}\|^2_{1,w_3}+\|\phi_{0xx}\|^2+\|\psi_{0}\|^2_{2}\leq
\delta_3$, then the system \eqref{KS}-\eqref{bou-cond} has a unique
global solution $(u,w)(x,t)$ satisfying
\begin{equation*}
\begin{cases}
u-U\in C([0,\infty); H^1\cap L^2_{w_3})\cap L^2((0,\infty);H^2\cap H^1_{w_3}),\\
w-W\in C([0,\infty); H^1)\cap L^2((0,\infty);H^2).
\end{cases}\end{equation*}
\item In either of the above  cases (1) or (2), we have the following asymptotic convergence:
\begin{equation*}
\sup\limits_{x\in\R_+}\abs{(u,v)(x,t)-(U,V)(x)}\to 0~~\text{as}~~
t\to+\infty,\\
\end{equation*}
and
$$\|u(\cdot,t)-U(\cdot,t)\|_{L^1(\R_+)}\to 0 \ \text{as}  \ t\to+\infty.$$
\end{enumerate}
\end{theorem}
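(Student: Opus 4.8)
The plan is to deduce Theorem \ref{thm2} from Theorem \ref{thm-1} by treating the Cole-Hopf transformation \eqref{trans} as an invertible change of unknowns between \eqref{KS}-\eqref{bou-cond} and \eqref{newuv}-\eqref{bound2}. First I would check the correspondence of solutions in both directions. The passage from $(u,w)$ to $(u,v)$ with $v=-w_x/w$ is exactly the computation already recorded in deriving \eqref{newuv}. For the converse, given a solution $(u,v)$ of \eqref{newuv}-\eqref{bound2}, I would set $w=be^{-\int_0^x v\,dy}$ and verify that it solves \eqref{KS}-\eqref{bou-cond}: the first equation is immediate since $(\ln w)_x=-v$ turns $(uv)_x$ into $-(u(\ln w)_x)_x$; the Dirichlet condition $w(0,t)=b$ holds by construction; and the zero-flux condition reduces to $u_x+\chi uv=0$ at $x=0$, which is part of \eqref{bound2}. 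The only point requiring care is recovering $w_t=\varepsilon w_{xx}-uw^m$: integrating the $v$-equation from $0$ to $x$ and using precisely the boundary identity $\varepsilon v_x-(\varepsilon v^2-uw^{m-1})=0$ at $x=0$ to cancel the constant of integration yields $\int_0^x v_t\,dy=\varepsilon v_x-\varepsilon v^2+uw^{m-1}$; combining this with $w_x=-vw$ and $w_{xx}=(v^2-v_x)w$ gives the desired equation.

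Second, I would match the initial data and the smallness hypotheses. Setting $v_0=-w_{0x}/w_0$, the antiderivative of the $v$-perturbation satisfies
\[
\int_0^x (v_0(y)-V(y))\,dy=\big(\ln W-\ln w_0\big)(x)-\big(\ln W-\ln w_0\big)(0)=-\ln w_0(x)+\ln W(x),
\]
where the boundary term vanishes because $w_0(0)=W(0)=b$. Hence the function $\psi_0$ appearing in Theorem \ref{thm2} coincides exactly with the $\psi_0$ of Theorem \ref{thm-1}, while $\phi_0$ is defined identically in both. Consequently the weighted smallness conditions are literally the same as in Theorem \ref{thm-1}, so one may take the admissible thresholds to be exactly $\delta_0$ and $\delta_1$. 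Applying Theorem \ref{thm-1} then produces a global solution $(u,v)$ with the regularity \eqref{regularity} (resp. \eqref{2.18}) and the convergence \eqref{asym}-\eqref{L1}.

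Finally, I would transfer the conclusions back to $(u,w)$. The $u$-component is untouched by the transformation, so all statements about $u-U$ carry over verbatim, including the $L^1$-convergence. For the chemical, writing $\psi(x,t)=\int_0^x(v-V)\,dy$ gives $w=We^{-\psi}$, hence $w-W=W(e^{-\psi}-1)$ and, using $w_x=-vw$ and $W_x=-VW$, the decomposition $w_x-W_x=-WV(e^{-\psi}-1)-W(v-V)e^{-\psi}$. Since the weighted energy estimates behind Theorem \ref{thm-1} are carried out at the level of the antiderivatives and keep $\psi$ small in the relevant norms, $\sup_x|\psi|$ is small by Sobolev embedding, the factors $e^{-\psi}$ and $e^{-\psi}-1$ are bounded pointwise by $|\psi|$ up to constants, and the two identities above convert the control of $v-V$ and $\psi$ into the asserted $H^1$ and $L^2((0,\infty);H^2)$ bounds for $w-W$; the uniform convergence follows from $\sup_x|w-W|\le b\,\sup_x|e^{-\psi}-1|\to0$ once $\sup_x|\psi|\to0$ as $t\to+\infty$. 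I expect the main obstacle to be precisely this regularity transfer: controlling the exponential nonlinearity in the weighted norms and, above all, securing the $L^2$-integrability and temporal decay of $w-W$ even when $W$ decays only slowly at infinity. This is exactly where the smallness of $\psi$ and the zero-mean condition $\psi(\infty,t)=0$ (inherited from $\psi_0(\infty)=0$) are indispensable, since they supply the extra decay $e^{-\psi}-1\approx-\psi\to0$ that $W$ alone does not provide.
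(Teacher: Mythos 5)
Your overall route is sound, and it differs genuinely from the paper's at the key step. The preliminary parts coincide: both arguments use the invertibility of the Cole--Hopf transformation \eqref{trans}, the identification $\psi_0=-\ln w_0+\ln W$ (valid because $w_0(0)=W(0)=b$), and the observation that the $u$-component, hence also the $L^1$-convergence, carries over verbatim from Theorem \ref{thm-1}. The difference is in how the bounds on $w-W$ are produced. The paper sets $\xi:=w-W$, derives the parabolic equation \eqref{3.53} whose source terms are controlled by the dissipation integrals $\int_0^t\int U\psi^2$ and $\int_0^t\int \phi_x^2/U$ of Lemmas \ref{lem-large1} and \ref{lem-small1}, and then runs two additional energy estimates (testing with $\xi$ and with $\xi_{xx}$, using Hardy's inequality, Lemma \ref{hardy}, to absorb the term $h\int W^{m-1}U\xi^2$), arriving at \eqref{3.54}--\eqref{3.55} and finally at $\sup_x|\xi|\to 0$ via $\xi^2\le 2\|\xi\|\,\|\xi_x\|$ and $\|\xi_x\|\to0$. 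You instead exploit the pointwise identities $w-W=W(e^{-\psi}-1)$ and $w_x-W_x=-WV(e^{-\psi}-1)-W\psi_xe^{-\psi}$, avoiding any new PDE and any new energy estimate; this is more elementary, but it shifts the burden onto converting weighted bounds on $(\psi,\psi_x,\psi_{xx})$ into the unweighted bounds claimed for $w-W$.

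Two points in that conversion need to be made explicit. First, as you partly acknowledge, Theorem \ref{thm-1} as stated controls only $v-V=\psi_x$, while your identities require $\psi$ itself; so you must invoke the a priori estimates \eqref{priori} and \eqref{priori-2} of Propositions \ref{global-existence} and \ref{global existence-2} (which do control $\|\psi\|_{w_2}$, respectively $\|\psi\|$, and hence $\|\psi\|$ uniformly in time since $W^{1-m}\ge b^{1-m}$ for $m\ge1$). Second, for the time-integrated bounds your identities generate terms like $V^2W^2\psi^2$ and $V_x W(e^{-\psi}-1)$, and these are integrable in time only after comparing their weights with the dissipation actually available, namely $\int_0^t\int U\psi^2<\infty$ (case $m\ge1$) or $\int_0^t\int UW^{m-1}\psi^2<\infty$ (case $0\le m<1$). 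Using the explicit formulas \eqref{3.19} one checks $V^2W^2\le CU$ and $V_x^2W^2\le CU$ (the exponent condition is $2r+2\ge\chi$, i.e. $m+1\ge0$), so the comparisons do hold; but they are not automatic --- for instance the naive bound $W^2\le CU$ is false whenever $\chi>2$ --- and your sketch leaves them unstated.

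The one genuine gap is in your decay argument, which is circular as written. You reduce $\sup_x|w-W|\to0$ to $\sup_x|\psi(\cdot,t)|\to0$ as $t\to\infty$, and then justify the latter by the smallness of $\psi$ and the condition $\psi(\infty,t)=0$. Smallness gives a bound uniform in time, not decay, and $\psi(\infty,t)=0$ alone gives nothing quantitative. What closes this step is the same interpolation device the paper applies to $\xi$: inside the proof of Theorem \ref{thm-1} one has $\|\psi_x(\cdot,t)\|\to0$ (the claim \eqref{ww}), which together with the uniform bound on $\|\psi(\cdot,t)\|$ yields $\sup_x\psi^2(x,t)\le 2\|\psi(\cdot,t)\|\,\|\psi_x(\cdot,t)\|\to0$. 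With that inserted, your argument is complete and delivers the same conclusions as the paper's.
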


It is worthy to point out that in the previous theorems the $L^1$ convergence of the cell density is obtained as a consequence of the convergence in relative $L^2$-entropy, see its proof in section 3 for details.

\section{Stability of the spike/layer steady state (Proof of Theorem \ref{thm-1})}\label{sec.3}

In this section, we first prove Theorem \ref{thm-1} by using the
weighted energy method. We divide the proofs into two parts $m\geq1$
and $0\leq m<1$. In the latter case, the Hardy inequality plays an
important role to capture the full dissipative structures of the
system. Finally, we transfer the stability of $(U,V)$ for system
\eqref{newuv}-\eqref{bound2} back to the original Keller-Segel
system \eqref{KS}-\eqref{constrain}, and prove that the steady
state $(U,W)$ is asymptotically stable.
\subsection{Reformulation of the problem}
The steady state  $(U,V)$ of system \eqref{newuv}-\eqref{bound2}
satisfies
\begin{eqnarray} \label{ssuv}
\left\{
\begin{array}{lll}
U_{xx}+\chi(UV)_{x}=0,\\
\varepsilon V_{xx}-(\varepsilon V^{2}-UW^{m-1})_{x}=0,
\end{array}
\right.
\end{eqnarray}
with boundary conditions
\begin{equation*}
(U_{x}+\chi UV)(0)=(\varepsilon V_{x}-(\varepsilon
V^{2}-UW^{m-1}))(0)=0,\ (U,V)(+\infty)=(0,0).
\end{equation*}
Integrating \eqref{ssuv} in $x$ gives
\begin{eqnarray} \label{ssuv-2}
\left\{
\begin{array}{lll}
U_{x}+\chi(UV)=0,\\
\varepsilon V_{x}-(\varepsilon V^{2}-UW^{m-1})=0.
\end{array}
\right.
\end{eqnarray}

In view of \eqref{bound2}, $(u,v)$ actually satisfies the no-flux
boundary conditions. The perturbation around $(U,V)$ should have
the conservation of mass. In other words, it holds that
\begin{equation}\label{3.4}
\ii(u(x,t)-U(x),v(x,t)-V(x))dx=\ii(u_0(x)-U(x),v_0(x)-V(x))dx=(0,0).
\end{equation}
This fact stimulates us to employ the technique of anti-derivative
to study the asymptotic stability of steady state $(U,V)$. More importantly, we find that once we take the anti-derivative for $v$, the nonlocal term in $w$ (see \eqref{eqc}) will be removed. This key observation helps us find a potential way to deal with the nonlocal effect.   Therefore we
decompose the solution $(u,v)$ as
\begin{equation}\label{decomposition}
\phi_{x}=u-U,\ \psi_{x}=v-V.\end{equation}
Then
\[(\phi,\psi)(x,t)=\int_0^x (u(y,t)-U(y),v(y,t)-V(y))dy.\]
Substituting \eqref{decomposition} into \eqref{newuv}, integrating
the equations in $x$, and using \eqref{ssuv}, we get
\begin{eqnarray} \label{newfai}
\begin{cases}
\phi_{t}=&\phi_{xx}+\chi V\phi_{x}+\chi U\psi_{x}+\chi\phi_{x}\psi_{x},\ (x,t)\in\R_+\times\R_+,\\[1mm]
\psi_{t}=&\varepsilon\psi_{xx}-2\varepsilon V\psi_{x}-U
W^{m-1}(1-e^{-(m-1)\psi})+
W^{m-1}\phi_{x}\\
&-\varepsilon\psi_{x}^{2}-
W^{m-1}(1-e^{-(m-1)\psi})\phi_{x},
\end{cases}
\end{eqnarray}
where the initial value $(\phi,\psi)(x,0)$ is given by
\begin{equation}\label{newbound1}
 (\phi,\psi)(x,0)=(\phi_0,\psi_0)(x)=\int_0^x (u_{0}(y)-U(y),v_{0}(y)-V(y))dy,
\end{equation} which satisfies
\[(\phi_0,\psi_0)(+\infty)=(0,0)\]
and the boundary condition satisfies from \eqref{3.4} that
\begin{equation}\label{newbound2}
 (\phi,\psi)(0,t)=(0,0), \ (\phi,\psi)(+\infty,t)=(0,0),
\end{equation}

We remark that the second equation of \eqref{newfai} does not contain the term  $W^{m-1} \phi_x$ originally. Here we artificially add and subtract this term in the second equation of \eqref{newfai} in order to cancel the trouble ``cross'' terms in the energy estimates. This treatment is indeed a very important trick introduced in this paper. We finally comment that working at the level of the antiderivatives is in some sense related to ideas used in Keller-Segel models stemming from optimal transport as in \cite{BCC}. It turns out the analysis for the case $m\geq 1$ and $0\leq m< 1$ are quite different. Hence in the following we shall separate these two cases to discuss.

\subsection{Case $m\geq1$}\label{sec.3.1}

We look for solutions of system \eqref{newfai} with \eqref{newbound1}
and \eqref{newbound2} in the space
\begin{equation*}
\begin{split}
X(0,T):=\{&(\phi,\psi)(x,t)\big|\phi\in C([0,T]; H^2\cap
H^1_{w_1}),\phi_x\in L^2((0,T);H^2\cap H^1_{w_1}),\\&\psi\in C([0,T];
H^2\cap H^1_{w_2}),\psi_x\in L^2((0,T);H^2\cap H^1_{w_2})\},
\end{split}\end{equation*}
for $T\in(0,+\infty]$, where $w_1=1/U$ and $w_2=W^{1-m}$. Set
\begin{equation*}
N(t):=\sup_{\tau\in[0,t]}(\|\phi(\cdot,\tau)\|_{1,w_1}+\|\phi_{xx}(\cdot,\tau)\|+\|\psi(\cdot,\tau)\|_{1,w_2}
+\|\psi_{xx}(\cdot,\tau)\|).
\end{equation*}
Since
\begin{equation}\label{Un}
U(x)\leq\frac{\lambda^{2}(\chi+1-m)^{2}}{2\varepsilon(\chi+m+1)b^{1-m}}=: \bar{u}
 \end{equation}
 and $W(x)\leq b$ for $x\in \R_+$, we have
\begin{equation}\label{w}
w_1\geq \frac{2\varepsilon(\chi+m+1)b^{1-m}}{\lambda^{2}(\chi+1-m)^{2}}>0 \ \text{and}\ w_2\geq1/b^{1-m}>0
\end{equation}
since $m\geq1$. Thus the Sobolev embedding theorem implies
\begin{equation*}
\sup_{\tau\in[0,t]}\{\|\phi(\cdot,\tau)\|_{L^\infty},\|\phi_x(\cdot,\tau)\|_{L^\infty},\|\psi(\cdot,\tau)\|_{L^\infty},
\|\psi_x(\cdot,\tau)\|_{L^\infty}\}\leq N(t).
\end{equation*}

For  system \eqref{newfai}-\eqref{newbound2}, we have the following results.
\begin{proposition}\label{global-existence}
Assume that $m\geq1$ and
$\chi>|1-m|$. Then there exists a constant $\delta_1$, such
that if $N(0)\leq\delta_1$, the system \eqref{newfai}-\eqref{newbound2} has a unique global solution
$(\phi,\psi)\in X(0,\infty)$ satisfying
\begin{equation}\label{priori}
\begin{split}
&\|\phi\|_{1,w_1}^2+\|\psi\|_{1,w_2}^2+\|\phi_{xx}\|^2+\|\psi_{xx}\|^2
\\&\ \ +\int_0^t(\|\phi_x(\tau)\|_{1,w_1}^2+\|\psi_x(\tau)\|_{1,w_2}^2+\|\phi_{xxx}(\tau)\|^2+\|\psi_{xxx}(\tau)\|^2)d\tau\leq
CN^2(0)
\end{split}
\end{equation}
for any $t\in [0,\infty)$.
\end{proposition}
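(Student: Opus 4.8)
The plan is to prove Proposition~\ref{global-existence} by the standard continuation scheme: first establish a local-in-time solution in $X(0,T)$, then close a uniform a priori estimate of the form \eqref{priori} under the smallness hypothesis $N(0)\le\delta_1$, and finally extend the local solution to $t=\infty$ by a continuity argument. Local existence for the semilinear parabolic system \eqref{newfai}--\eqref{newbound2} is routine: one sets up a contraction mapping (or iteration scheme) in $X(0,T)$ for $T$ small, using that the coefficients $V$, $U$, $W^{m-1}$ are smooth and bounded on $\R_+$ by \eqref{Un} and $W\le b$, and that, since $m\ge1$, the weights satisfy $w_1\ge c>0$ and $w_2\ge c>0$ as recorded in \eqref{w}, so the weighted spaces embed into the unweighted ones and the nonlinear map is locally Lipschitz. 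The entire difficulty therefore lies in the a priori estimate, which must be uniform in $t$ and must reproduce the weighted dissipation appearing under the integral in \eqref{priori}.

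For the a priori estimate I would work under the bootstrap assumption $N(t)\le\nu$ for a small $\nu$ to be fixed, so that by Sobolev embedding $\|\phi\|_{L^\infty},\|\phi_x\|_{L^\infty},\|\psi\|_{L^\infty},\|\psi_x\|_{L^\infty}\le\nu$ are all small. The heart of the matter is the \emph{zeroth-order weighted estimate}, obtained by multiplying the $\phi$-equation of \eqref{newfai} by $w_1\phi=\phi/U$ and the $\psi$-equation by $\chi w_2\psi=\chi W^{1-m}\psi$, adding, and integrating over $\R_+$. The factor $\chi$ is precisely what forces the linear coupling terms to cancel: the term $\chi U\psi_x$ contributes $\chi\int\phi\psi_x\,dx=-\chi\int\phi_x\psi\,dx$, while the artificially retained term $W^{m-1}\phi_x$ contributes $+\chi\int\phi_x\psi\,dx$, so their sum vanishes after integration by parts (the boundary terms drop because $\phi(0,t)=\psi(0,t)=0$ by \eqref{newbound2} and everything decays at $x=+\infty$). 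What survives is the dissipation
\begin{equation*}
\|\phi_x\|_{w_1}^2+\varepsilon\chi\|\psi_x\|_{w_2}^2+\chi\int_0^\infty U\bigl(1-e^{-(m-1)\psi}\bigr)\psi\,dx,
\end{equation*}
where the last integral is nonnegative since $m\ge1$ and $(1-e^{-(m-1)\psi})\psi\ge0$, giving effective damping $\sim\int U\psi^2\,dx$; the terms generated by $V$ and $(\ln U)_x$ are lower order and absorbed after using \eqref{ssuv-2}.

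The nonlinear remainders --- $\chi\phi_x\psi_x$ in the $\phi$-equation and $\varepsilon\psi_x^2$, the superlinear part of $UW^{m-1}(1-e^{-(m-1)\psi})$, and $W^{m-1}(1-e^{-(m-1)\psi})\phi_x$ in the $\psi$-equation --- are all controlled by the smallness of $N(t)$. The key point is to split each product so that one factor is measured by a dissipative weighted norm: e.g.\ $\int(1-e^{-(m-1)\psi})\phi_x\psi\,dx$ is estimated, after a Taylor expansion and the bound $|1-e^{-(m-1)\psi}|\le C|\psi|$, by $C\|\psi\|_{L^\infty}\,\|\sqrt U\psi\|\,\|\phi_x\|_{w_1}\le C\nu\bigl(\int U\psi^2\,dx+\|\phi_x\|_{w_1}^2\bigr)$, which is absorbed into the dissipation once $\nu$ is small. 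After the zeroth-order estimate I would differentiate \eqref{newfai} once and twice in $x$ and repeat the procedure, multiplying by the corresponding weighted derivatives for the first-order estimate and by the \emph{unweighted} $\phi_{xx},\psi_{xx}$ for the second-order estimate (consistent with the unweighted norms of $\phi_{xx},\psi_{xx}$ in \eqref{priori}); the same cancellation mechanism and the same dissipation-absorbs-nonlinearity principle apply, now also using the already-controlled lower-order quantities. Summing the estimates over all orders yields a differential inequality $\tfrac{d}{dt}E(t)+D(t)\le C\nu\,D(t)$, with $E$ equivalent to the energy in \eqref{priori} and $D$ the full dissipation; choosing $\nu$ small absorbs the right-hand side, and integration in $t$ gives $E(t)+\int_0^t D\le E(0)\le CN^2(0)$, which is \eqref{priori}.

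Finally, since $N(t)^2$ is controlled by $E(t)$, the a priori bound shows $N(t)\le C\,N(0)$; choosing $\delta_1$ with $C\delta_1<\nu$ makes the bootstrap assumption self-improving, so the local solution never leaves the ball $\{N\le\nu\}$ and extends to $t=\infty$ by the standard continuation argument. I expect the \emph{main obstacle} to be the weighted nonlinear estimates near the vacuum end $x=+\infty$, where $w_1=1/U$ and $w_2=W^{1-m}$ blow up: one must verify that every nonlinear term can be paired against the weighted dissipation (rather than an unweighted norm) and that all integration-by-parts boundary contributions at $x=+\infty$ genuinely vanish, which relies on the decay built into the definition of $X(0,T)$ and may require a density/approximation argument to justify rigorously.
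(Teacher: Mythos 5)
Your overall scheme (local existence plus bootstrap on $N(t)$, the multipliers $\phi/U$ and $\chi W^{1-m}\psi$, the cancellation of the cross terms $\chi U\psi_x$ and $W^{m-1}\phi_x$, continuation) is the same as the paper's, but the central step of your zeroth-order estimate is flawed. You claim that the damping $\sim\int_0^\infty U\psi^2$ comes from the nonnegativity of $\chi\int_0^\infty U\bigl(1-e^{-(m-1)\psi}\bigr)\psi\,dx$, and that the terms generated by $V$ and the derivatives of the weights are ``lower order and absorbed''. Neither claim holds. At $m=1$ (which the proposition includes) the quantity $\bigl(1-e^{-(m-1)\psi}\bigr)\psi$ vanishes identically, and for $m$ near $1$ it only yields $(m-1)\int U\psi^2$, so your ``effective damping'' degenerates precisely where it is needed to absorb the remaining terms and to feed the later estimates (the bound $\int_0^t\int_0^\infty U\psi^2\le C$ from \eqref{3.15} is used again in Lemma \ref{H1} and in the proof of Theorem \ref{thm2}). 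Moreover, the weight terms $\frac{\varepsilon}{2}(W^{1-m})_{xx}+\varepsilon(VW^{1-m})_x$ are \emph{not} lower order: using the explicit profiles \eqref{3.19} one checks that each is a constant multiple of $U$ itself, i.e. exactly the size of the damping you are trying to produce; and since they enter the energy identity linearly in $\psi^2$, with coefficients independent of the solution, smallness of $N(t)$ cannot absorb them --- only their sign can save you. This sign is precisely the content of the paper's key computation \eqref{caclu}, \eqref{s00}:
\[
\frac{\varepsilon}{2}(W^{1-m})_{xx}+\varepsilon(VW^{1-m})_x+(1-m)U
=\frac{1-m^2+\chi(1-3m)}{2(\chi+m+1)}\,U\le-\frac{\chi}{\chi+m+1}\,U \qquad (m\ge1),
\]
which produces the uniform damping $\frac{\chi^2}{\chi+m+1}\int_0^\infty U\psi^2$ from the \emph{linear} structure of the explicit spike profile, not from the sign of the nonlinearity. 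Without this structural computation your energy inequality does not close.

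There is a second gap in your plan for the higher-order estimates. Differentiating \eqref{newfai} in $x$ and integrating by parts produces boundary terms at $x=0$ involving $\phi_x(0,t)$, $\phi_{xx}(0,t)$, $\psi_{xx}(0,t)$ (and third derivatives at the $H^2$ level), for which no boundary data exist: \eqref{newbound2} only gives $\phi(0,t)=\psi(0,t)=0$. Your stated worry about contributions at $x=+\infty$ is the harmless part; the real obstruction sits at $x=0$. The paper circumvents it as follows: for the $H^1$ level it multiplies the \emph{undifferentiated} equations by $\phi_{xx}/U$ and $W^{1-m}\psi_{xx}$, so that all boundary terms carry the factor $\phi_t(0,t)=\psi_t(0,t)=0$; for the $H^2$ level (Lemma \ref{H2}) it differentiates in $t$ rather than $x$, tests with $(\phi_t,\psi_t)$ --- again only $\phi_t,\psi_t$ appear at the boundary --- and then recovers $\|\phi_{xx}\|$, $\|\psi_{xx}\|$, $\|\phi_{xxx}\|$, $\|\psi_{xxx}\|$ algebraically from the equations. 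As written, your $x$-differentiation route would leave uncontrolled boundary terms.
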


The local existence of solutions to system \eqref{newfai}-\eqref{newbound2} is standard (e.g., see \cite{Nishida78}). To prove Proposition
\ref{global-existence}, we
only need to derive the following {\it a priori} estimates.

\begin{proposition}\label{mingti-1}
Assume that the conditions of Proposition \ref{global-existence}
hold, and that $(\phi,\psi)\in X(0,T)$ is a solution of system
\eqref{newfai}-\eqref{newbound2}  for some
constant $T>0$. Then there is a positive constant $\varepsilon_1>0$,
independent of $T$, such that if $N(t)\leq \varepsilon_1$ for any
$0\leq t\leq T$, then $(\phi,\psi)$ satisfies $\eqref{priori}$ for
any $0\leq t\leq T$.
\end{proposition}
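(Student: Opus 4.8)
My plan is to run the standard continuation argument: since local existence of solutions to \eqref{newfai}--\eqref{newbound2} is available, Proposition \ref{global-existence} follows once the a priori bound \eqref{priori} of Proposition \ref{mingti-1} is established under the smallness assumption $N(t)\le\varepsilon_1$. The concrete goal is therefore a closed weighted energy inequality of the form
\[
\frac{d}{dt}E(t)+D(t)\le C\,N(t)\,D(t),
\]
where $E(t)\simeq\|\phi\|_{1,w_1}^2+\|\psi\|_{1,w_2}^2+\|\phi_{xx}\|^2+\|\psi_{xx}\|^2$ and $D(t)$ collects the dissipation $\|\phi_x\|_{1,w_1}^2+\|\psi_x\|_{1,w_2}^2+\|\phi_{xxx}\|^2+\|\psi_{xxx}\|^2$. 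Integrating in time and absorbing the right-hand side into $D$ for $\varepsilon_1$ small then yields \eqref{priori} with $E(0)\simeq N^2(0)$.

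The crux is that the weights $w_1=1/U$ and $w_2=W^{1-m}$ are tuned to annihilate the transport terms. I would start with the lowest-order estimate: multiply the $\phi$-equation in \eqref{newfai} by $\phi/U$ and the $\psi$-equation by $\chi W^{1-m}\psi$, integrate over $\R_+$, and add. The steady-state relations \eqref{ssuv-2} give $U_x=-\chi UV$, hence $(1/U)_x=\chi V/U$, and $(W^{1-m})_x=(m-1)W^{1-m}V$; consequently the weight derivatives produced when the diffusion terms are integrated by parts exactly cancel the convection terms $\chi V\phi_x$ and $-2\varepsilon V\psi_x$. The inter-equation cross terms also cancel, since the $\chi U\psi_x$ term contributes $\chi\int\phi\psi_x$ while $W^{m-1}\phi_x$ contributes $\chi\int\phi_x\psi=-\chi\int\phi\psi_x$ (the boundary term vanishing because $\phi(0,t)=\psi(0,t)=0$ by \eqref{newbound2}). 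What remains is the genuine dissipation $\int\phi_x^2/U+\chi\varepsilon\int W^{1-m}\psi_x^2$ together with the reaction contribution $-\chi\int U\big(1-e^{-(m-1)\psi}\big)\psi$; for $m\ge1$ one has $\big(1-e^{-(m-1)\psi}\big)\psi\ge0$, so this term has a favorable sign and supplies additional damping rather than growth. This sign is exactly what separates the present case from $0\le m<1$, where Hardy's inequality must instead be invoked.

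I would then differentiate \eqref{newfai} once and twice in $x$ and repeat this scheme, weighting by $w_1,w_2$ at first order and working unweighted at second order, in agreement with the norm defining $N(t)$. The nonlinear and error contributions, namely the quadratic advection $\chi\phi_x\psi_x$ and $\varepsilon\psi_x^2$, the product $W^{m-1}\big(1-e^{-(m-1)\psi}\big)\phi_x$, and the higher Taylor remainders of $1-e^{-(m-1)\psi}$, are cubic or higher; by the Sobolev bound $\|\phi\|_{L^\infty},\|\phi_x\|_{L^\infty},\|\psi\|_{L^\infty},\|\psi_x\|_{L^\infty}\le N(t)\le\varepsilon_1$ they carry a small prefactor and can be absorbed into $D(t)$. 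Here the bounds \eqref{Un}--\eqref{w}, which give $w_1\ge1/\bar u>0$ and $w_2\ge b^{1-m}>0$ for $m\ge1$, are what let the weighted norms dominate the unweighted ones and keep all weight-derivative coefficients such as $U_x/U=-\chi V$ bounded; in particular no degeneracy of the weights occurs and the undifferentiated $\psi$ need not be recovered from the dissipation.

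I expect the principal obstacle to be the bookkeeping in the weighted first-order estimate. Differentiating the weight $1/U$ regenerates factors of $V$ and $V_x$ and thereby reintroduces transport-type and cross terms, which must once more be shown to cancel via \eqref{ssuv-2} or to be controllable by the dissipation. Compounding this, the integrations by parts at first and second order produce boundary contributions at $x=0$ of the form $\phi_{xx}(0)\phi_x(0)$ and $\psi_{xx}(0)\psi_x(0)$ that are \emph{not} eliminated by $(\phi,\psi)(0,t)=(0,0)$; these must be handled by differentiating the boundary relations and using the equations \eqref{newfai} evaluated at $x=0$ (together with \eqref{bound2}) to express the higher boundary derivatives in terms of $\phi_x(0),\psi_x(0)$, which are then dominated by trace and interpolation inequalities. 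Once these cancellations and boundary terms are secured, summing the order-zero, -one, and -two estimates with suitably balanced constants closes the inequality and delivers \eqref{priori}.
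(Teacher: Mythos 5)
Your overall framework (the multipliers $\phi/U$ and $\chi W^{1-m}\psi$, the cross-term cancellation $\chi\ii(\phi\psi_x+\phi_x\psi)=0$ enabled by the artificially inserted term $W^{m-1}\phi_x$, absorption by smallness of $N(t)$) is the same as the paper's, but there is a genuine gap at the heart of your $L^2$ step. The ``exact cancellation'' of convection by weight derivatives holds only for the $\phi$-equation, where indeed $(1/U)_{xx}-(\chi V/U)_x=0$ (the paper's \eqref{3.17}). For the $\psi$-equation it fails: since $(W^{1-m})_x=(m-1)W^{1-m}V$, the weight-derivative term $-\varepsilon\chi(m-1)\ii VW^{1-m}\psi\psi_x$ coming from integrating the diffusion by parts and the convection contribution $-2\varepsilon\chi\ii VW^{1-m}\psi\psi_x$ have the \emph{same} sign and add up to $-\varepsilon\chi(m+1)\ii VW^{1-m}\psi\psi_x\neq0$; after a further integration by parts this is a quadratic term in $\psi$ whose coefficient must be evaluated from the explicit steady state. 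This is exactly the paper's computation \eqref{caclu}--\eqref{s00}, which shows that this leftover, combined with the \emph{linearized} reaction term $(1-m)U$, yields the damping $\frac{\chi^{2}}{\chi+m+1}\ii U\psi^{2}$ in the dissipation \eqref{3.15}. Your decision to keep the reaction term only as a sign-definite quantity ($(1-e^{-(m-1)\psi})\psi\ge0$), together with your explicit claim that ``the undifferentiated $\psi$ need not be recovered from the dissipation,'' discards precisely this damping, and then the estimate does not close for $m>1$: the terms $\chi(1-m)\ii\phi_x\psi^{2}$, $\chi\ii(1-e^{-(m-1)\psi})\phi_x\psi$ and the Taylor remainders (all of which vanish identically only when $m=1$) can only be absorbed via bounds of the form $CN(t)\ii U\psi^{2}+N(t)\ii\phi_x^{2}/U$ (see \eqref{s33}); the alternative bound $CN(t)\ii\psi^{2}$ is an energy, not a dissipation, term, and Gronwall then gives a bound growing in $T$ rather than the uniform estimate \eqref{priori}. (One could in principle recover $\ii U\psi^{2}\le C\ii W^{1-m}\psi_x^{2}$ from Lemma \ref{hardy} with $j=-\chi/r$, but you explicitly reserve Hardy for $0\le m<1$ and never invoke it here.)

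A second, lesser gap concerns the derivative estimates. Differentiating \eqref{newfai} once and twice in $x$ produces, as you yourself note, boundary terms such as $\phi_{xx}(0)\phi_x(0)$ (and at second order $\phi_{xxx}(0)\phi_{xx}(0)$), and your proposed treatment by traces and interpolation is left unverified; it is delicate because these terms carry no favorable sign. The paper avoids them altogether: at the $H^1$ level it multiplies the \emph{undifferentiated} equations by $\phi_{xx}/U$ and $W^{1-m}\psi_{xx}$ and routes the integration by parts through $\phi_t$, the boundary contribution $(\phi_t\phi_x/U)\big|_{x=0}$ vanishing because $\phi(0,t)\equiv 0$ forces $\phi_t(0,t)=0$ (see \eqref{nn} and \eqref{3.24}); at the $H^2$ level it differentiates in \emph{time} (see \eqref{t-equ}), tests with $(\phi_t,\psi_t)$, which again vanish at $x=0$, and then recovers $\|\phi_{xx}\|$, $\|\psi_{xx}\|$ and the dissipation of $\|\phi_{xxx}\|$, $\|\psi_{xxx}\|$ algebraically from the equations (see \eqref{3.32}--\eqref{3.36}). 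To repair your proof you would need either to adopt this route or to carry out in full the boundary-term control you only sketch.
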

We first establish the basic $L^2$ estimate.

\begin{lemma}\label{lem-large1}
If $N(t)\ll 1$, then there exists a constant $C>0$ such that
\begin{equation}\label{3.15}
\begin{split}\int_0^\infty \left(\frac{\phi^{2}}{U}+ W^{1-m}\psi^{2}\right)
+\int_0^t \int_0^\infty \left(\frac{\phi_{x}^{2}}{U}+
W^{1-m}\psi_{x}^{2}+U\psi^2\right)\leq
C(\|\phi_{0}\|^2_{w_1}+\|\psi_{0}\|^2_{w_2}).
\end{split}
\end{equation}

\end{lemma}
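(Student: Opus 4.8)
The plan is to run a single weighted $L^2$ energy identity on the antiderivative system \eqref{newfai}, testing the $\phi$-equation against the weight $w_1\phi=\phi/U$ and the $\psi$-equation against $\chi w_2\psi=\chi W^{1-m}\psi$, and adding the two resulting identities. The relative factor $\chi$ in the second multiplier is the key bookkeeping choice: it is tuned so that the \emph{linear coupling terms cancel exactly}. Indeed, testing $\chi U\psi_x$ against $\phi/U$ produces $\chi\int_0^\infty\phi\psi_x\,dx$, while the artificially inserted term $W^{m-1}\phi_x$ tested against $\chi W^{1-m}\psi$ produces $\chi\int_0^\infty\psi\phi_x\,dx$, and their sum is $\chi\int_0^\infty(\phi\psi)_x\,dx=0$ because $\phi(0,t)=\psi(0,t)=0$ and $(\phi,\psi)(\infty,t)=(0,0)$ by \eqref{newbound2}. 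This is exactly the purpose of the add-and-subtract device flagged in the remark following \eqref{newbound2}.

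Next I would integrate by parts in the diffusion terms and systematically use the steady-state relations \eqref{ssuv-2}, i.e. $U_x=-\chi UV$ and $\varepsilon V_x=\varepsilon V^2-UW^{m-1}$, together with $(W^{1-m})_x=(m-1)VW^{1-m}$, to dispose of all remaining first-order terms (all boundary contributions vanish since $\phi,\psi$ and $V$ decay and $\phi(0,t)=\psi(0,t)=0$). In the $\phi$-equation, $\int_0^\infty\frac{\phi\phi_{xx}}{U}$ yields $-\int_0^\infty\frac{\phi_x^2}{U}-\chi\int_0^\infty\frac{V\phi\phi_x}{U}$ after using $U_x/U^2=-\chi V/U$, and the second piece cancels the advection contribution $\chi\int_0^\infty\frac{V\phi\phi_x}{U}$ exactly, leaving the clean dissipation $\int_0^\infty\frac{\phi_x^2}{U}$. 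In the $\psi$-equation, the diffusion and the $-2\varepsilon V\psi_x$ term combine into $-\chi\varepsilon\int_0^\infty W^{1-m}\psi_x^2$ plus a transport remainder $-\chi\varepsilon(m+1)\int_0^\infty VW^{1-m}\psi\psi_x$, which upon one more integration by parts becomes $\frac{\chi\varepsilon(m+1)}{2}\int_0^\infty(VW^{1-m})_x\psi^2$.

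The decisive computation is the sign of the resulting $\int_0^\infty U\psi^2$ coefficient. Substituting $V_x=V^2-UW^{m-1}/\varepsilon$ gives $(VW^{1-m})_x=W^{1-m}(mV^2-UW^{m-1}/\varepsilon)$, and then invoking the algebraic identity $\varepsilon V^2W^{1-m}=\frac{2}{\chi+m+1}U$ — which follows at once from \eqref{new1}--\eqref{new2}, since $\varepsilon W_x^2=\frac{2}{\chi+m+1}UW^{m+1}$ and $V=-W_x/W$ — collapses the whole transport remainder into the single term $-\frac{\chi(m+1)(\chi+1-m)}{2(\chi+m+1)}\int_0^\infty U\psi^2$. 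Its coefficient is strictly negative precisely because $\chi>|1-m|=m-1$ forces $\chi+1-m>0$; moved to the left-hand side, this is exactly the dissipation $\int_0^\infty U\psi^2$ appearing in \eqref{3.15}. Moreover the nonlinear term $-\chi\int_0^\infty U\psi(1-e^{-(m-1)\psi})$ is nonpositive for $m\ge1$, since $s(1-e^{-(m-1)s})\ge0$, so it only contributes further good dissipation.

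Finally I would collect the genuinely cubic remainders, namely $\chi\int_0^\infty\frac{\phi\phi_x\psi_x}{U}$, $-\chi\varepsilon\int_0^\infty W^{1-m}\psi\psi_x^2$, and $-\chi\int_0^\infty\psi(1-e^{-(m-1)\psi})\phi_x$, and estimate each by placing one factor in $L^\infty$ (controlled by $N(t)$ through Sobolev embedding, which is legitimate for $m\ge1$ because by \eqref{w} the weights $w_1,w_2$ are bounded below) and the remaining quadratic expression by the energy and the dissipation just produced; using the matched pairings $\frac{\phi_x}{\sqrt U}\cdot\sqrt U\,\psi$ and $\sqrt{W^{1-m}}\psi\cdot\sqrt{W^{1-m}}\psi_x$, these are all bounded by $CN(t)$ times the energy-plus-dissipation and hence absorbed once $N(t)\ll1$. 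Integrating in $t$ then gives \eqref{3.15}. I expect the main obstacle to be the content of the third paragraph: extracting a \emph{definite-sign} $\int_0^\infty U\psi^2$ term from the transport terms, which rests on both the identity $\varepsilon V^2W^{1-m}\propto U$ and the sharp exploitation of $\chi>|1-m|$; the exact (rather than merely absorbable) cancellation of the cross terms, achieved through the weight ratio $\chi$ and the add-subtract trick, is the other structural point on which the clean form of the estimate depends.
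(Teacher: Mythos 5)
Your proposal is correct and takes essentially the same route as the paper: the same weighted multipliers $\phi/U$ and $\chi W^{1-m}\psi$, the same exact cancellation of the cross terms $\chi\int_0^\infty(\phi\psi_x+\phi_x\psi)\,dx=0$ enabled by the inserted $W^{m-1}\phi_x$ term and the boundary conditions \eqref{newbound2}, the same extraction of the $\int_0^\infty U\psi^2$ dissipation from the steady-state structure (your coefficient $\frac{\chi(m+1)(\chi+1-m)}{2(\chi+m+1)}$ is consistent with the paper's computation \eqref{s00}, which additionally folds in the linear part of the nonlinearity), and the same absorption of the cubic remainders under $N(t)\ll1$ using $|\phi|/\sqrt{U}\leq CN(t)$ and the lower bounds \eqref{w} on the weights. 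The only cosmetic differences are that the paper Taylor-expands $1-e^{-(m-1)\psi}$ and computes the resulting potential coefficients from the explicit formulas \eqref{3.19}, whereas you keep the nonlinear term intact via the sign observation $s(1-e^{-(m-1)s})\geq 0$ for $m\geq 1$ and work from the steady-state identities \eqref{ssuv-2}; both amount to the same algebra.
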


\begin{proof}
Multiplying the first equation of \eqref{newfai} by $\frac{\phi}{U}$
and  the second one  by $\chi  W^{1-m}\psi$, integrating the
resulting equations in $x$, and using the Taylor expansion to get
\begin{equation*}
1-e^{-(m-1)\psi}=(m-1)\psi-\sum_{n=2}^{\infty}\frac{(1-m)^{n}\psi^{n}}{n!},
\end{equation*}
we have
\begin{equation}\label{3.16}
\begin{split}&\frac{1}{2}\frac{d}{dt}\int_0^\infty \left(\frac{\phi^{2}}{U}+\chi W^{1-m}\psi^{2}\right)+\int_0^\infty \frac{\phi_{x}^{2}}{U}+\chi\varepsilon\int_0^\infty  W^{1-m}\psi_{x}^{2}\\&\quad-\int_0^\infty \frac{\phi^{2}}{2}\left[\left(\frac{1}{U}\right)_{xx}-\left(\frac{\chi V}{U}\right)_{x}\right]-\chi\int_0^\infty \psi^{2}\left[\frac{\varepsilon}{2}( W^{1-m})_{xx}+\varepsilon(V W^{1-m})_{x}+(1-m)U\right]\\&=\chi\int_0^\infty\frac{\phi\phi_{x}\psi_{x}}{U}-\chi\varepsilon\int_0^\infty W^{1-m}\psi\psi_{x}^{2}+\chi(1-m)\int_0^\infty \phi_{x}\psi^{2}\\&\quad+\chi\int_0^\infty(U\psi+\phi_{x}\psi)\sum_{n=2}^{\infty}\frac{(1-m)^{n}\psi^{n}}{n!}.
\end{split}\end{equation}
A direct calculation by \eqref{ssuv-2} and \eqref{diff-equ} yields
\begin{equation}\label{3.17}
\left(\frac{1}{U}\right)_{xx}-\left(\frac{\chi V}{U}\right)_{x}=0,
\end{equation}
and
\begin{equation}\label{caclu}
\begin{split}
&\frac{\varepsilon}{2}( W^{1-m})_{xx}+\varepsilon(V W^{1-m})_{x}+(1-m)U\\
&=-\frac{\varepsilon}{2}(1-m)m W^{-m-1}
W_{x}^{2}+\frac{\varepsilon}{2}(1-m) W^{-m} W_{xx}\\&\quad
+\varepsilon V_x W^{1-m}+\varepsilon(1-m)V W^{-m} W_{x}+(1-m)U
\\&
=-\frac{\varepsilon}{2}(1-m)m W^{-m-1} W_{x}^{2}
+\frac{1-3m}{2}U+\varepsilon V^{2} W^{1-m}+\varepsilon(1-m)V W^{-m}
W_{x}.
\end{split}\end{equation}
To estimate \eqref{caclu}, for convenience, we set
\begin{equation}\label{notion}
\theta:=\lambda(\chi+1-m)>0,\ \beta:=\varepsilon(\chi+m+1)b^{1-m}>0,\
r:=\frac{\chi+m-1}{2}>0.
\end{equation}
Then by \eqref{U} and \eqref{C}, we have
\begin{equation}\label{3.19}
\begin{split}
&U(x)=\frac{\theta^{2}}{2\beta}\left(1+\frac{\theta r}{\beta}x\right)^{-\frac{\chi}{r}},\  W(x)=b\left(1+\frac{\theta r}{\beta}x\right)^{-\frac{1}{r}},\\
&W_{x}(x)=-\frac{b\theta}{\beta}\left(1+\frac{\theta r}{\beta}x\right)^{-\frac{1}{r}-1},\
V(x)=-\frac{W_x}{W}=\frac{\theta}{\beta}\left(1+\frac{\theta
r}{\beta}x\right)^{-1},\
\end{split}
\end{equation}
Substituting \eqref{3.19} into \eqref{caclu} gives
\begin{equation}\begin{split}\label{s00}
\text{RHS of } \eqref{caclu}&=\frac{\theta^{2}}{\beta}\left(\frac{m^{2}+m}{2(\chi+m+1)}+\frac{1-3m}{4}\right)\left(1+\frac{\theta r}{\beta}\cdot x\right)^{-\frac{\chi}{r}}\\
&=\frac{1-m^2+\chi(1-3m)}{2(\chi+m+1)}\cdot U\\
&\leq-\frac{\chi}{\chi+m+1}\cdot U
\end{split}\end{equation}
where we have used $m\geq1$.
Next we estimate the terms on the RHS of \eqref{3.16}. With the fact $\frac{|U_{x}|}{U}\leq \frac{\a\theta}{\b}$, we derive that
\begin{equation*}
\frac{\phi^{2}}{U}=\int_0^x \left(\frac{\phi^{2}}{U}\right)_{x}
=\int_0^x
\left(\frac{2\phi\phi_{x}}{U}-\frac{\phi^{2}U_{x}}{U^2}\right) \leq
C\bigg(\int_0^\infty \frac{\phi^{2}}{U}+\int_0^\infty
\frac{\phi_{x}^{2}}{U}\bigg) \leq CN^{2}(t),
\end{equation*}
and hence have
\begin{equation*}
\frac{|\phi|}{\sqrt{U}}\leq CN(t).
\end{equation*}
Notice that $ W^{1-m}(x)> b^{1-m}$ over $(0,\infty)$ when $m\geq1$. Then it follows that
\begin{equation*}\begin{split}
\chi\int_0^\infty \frac{|\phi\phi_{x}\psi_{x}|}{U}
&=\chi\int_0^\infty
\frac{|\phi|}{\sqrt{U}}\cdot\frac{|\phi_{x}|}{\sqrt{U}}\cdot|\psi_{x}|\\
&\leq \frac{CN(t)}{\varepsilon b^{1-m}}\int_0^\infty
\frac{\phi_{x}^{2}}{U}+\chi\varepsilon
N(t)b^{1-m}\int_0^\infty \psi_{x}^{2}\\&\leq CN(t)\int_0^\infty
\frac{\phi_{x}^{2}}{U}+\chi \varepsilon N(t)\int_0^\infty
W^{1-m}\psi_{x}^{2}.\end{split}
\end{equation*}
By Cauchy-Schwarz inequality and the fact $\|\psi(\cdot,t)\|_{L^\infty}\leq N(t)$, one has
\begin{eqnarray*}
\begin{aligned}
&\chi(1-m)\int_0^\infty \phi_{x}\psi^{2}\leq N(t)\int_0^\infty
\frac{\phi_{x}^{2}}{U}+CN(t)\int_0^\infty U\psi^{2},\\
&-\chi\varepsilon\int_0^\infty W^{1-m}\psi\psi_{x}^{2}\leq\chi \varepsilon N(t)\int_0^\infty W^{1-m}\psi_x^{2}.
\end{aligned}
\end{eqnarray*}
Furthermore, noting $e^{m-1}=\sum\limits_{n=0}^{\infty}\frac{(1-m)^{n}}{n!}$, if
$N(t)<1$ and hence $\|\psi(\cdot,t)\|_{L^\infty}\leq1$, we have
\begin{equation*}
\left|\sum_{n=2}^{\infty}\frac{(1-m)^{n}\psi^{n}}{n!}\right|
\leq(1-m)^{2}\psi^{2}\sum_{n=2}^{\infty}\frac{(1-m)^{n-2}}{n!}\leq(1-m)^{2}\psi^{2}e^{m-1}.
\end{equation*}
Hence,
\begin{equation}\begin{split}\label{s33}
\left|\int_0^\infty
(U+\phi_{x})\psi\sum_{n=2}^{\infty}\frac{(1-m)^{n}\psi^{2}}{n!}\right|
&\leq C(m-1)^{2}\int_0^\infty (U+|\phi_{x}|)|\psi|^{3}\\& \leq
CN(t)\int_0^\infty U\psi^{2}+N(t)\int_0^\infty
\frac{\phi_{x}^{2}}{U}.\end{split}
\end{equation}
Now substituting \eqref{s00}-\eqref{s33} into \eqref{3.16}, we have
\begin{equation*}
\begin{split}&\frac{1}{2}\int_0^\infty \left(\frac{\phi^{2}}{U}+\chi  W^{1-m}\psi^{2}\right)
+(1-CN(t))\int_0^t\int_0^\infty \frac{\phi_{x}^{2}}{U}\\
&+\chi\varepsilon(1-2N(t))\int_0^t \int_0^\infty  W^{1-m}\psi_{x}^{2}
+\left(\frac{\chi^2}{\chi+m+1}-CN(t)\right)\int_0^t\int_0^\infty U\psi^{2}\\
&\leq \frac{1}{2}\int_0^\infty \left(\frac{\phi_{0}^{2}}{U}+\chi
W^{1-m}\psi_{0}^{2}\right).
\end{split}
\end{equation*}
Therefore, \eqref{3.15} holds provided that $N(t)\ll1$.
\end{proof}

We next establish the $H^1$ estimate.
\begin{lemma}\label{H1}
If $N(t)\ll1$, then the solution of \eqref{newfai}-\eqref{newbound2} satisfies
\begin{equation}\label{3.20}
\begin{split}\int_0^\infty \left(\frac{\phi_x^{2}}{U}+ W^{1-m}\psi_x^{2}\right)
+\int_0^t \int_0^\infty \left(\frac{\phi_{xx}^{2}}{U}+
W^{1-m}\psi_{xx}^{2}\right) \leq
C(\|\phi_{0}\|^2_{1,w_1}+\|\psi_{0}\|^2_{1,w_2})
\end{split}
\end{equation}
where $C>0$ is a constant independent of $t$.
\end{lemma}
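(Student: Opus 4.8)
The plan is to run a weighted $H^1$ energy estimate parallel to Lemma~\ref{lem-large1}, but now testing against the derivatives. Concretely, I would multiply the first equation of \eqref{newfai} by $-\left(\frac{\phi_x}{U}\right)_x$ and the second by $-\chi\left(W^{1-m}\psi_x\right)_x$, integrate over $\R_+$, and add; the weights $1/U$ and $\chi W^{1-m}$ are kept identical to the $L^2$ level so that the coupling retains the same algebraic structure. Since $U$ and $W$ are $t$-independent and $\phi(0,t)=\psi(0,t)=0$ forces $\phi_t(0,t)=\psi_t(0,t)=0$, integrating the time terms by parts produces no boundary contribution and assembles them into $\frac12\frac{d}{dt}\int_0^\infty\left(\frac{\phi_x^2}{U}+\chi W^{1-m}\psi_x^2\right)$, while the diffusion terms yield the good dissipation $\int_0^\infty\frac{\phi_{xx}^2}{U}+\chi\varepsilon\int_0^\infty W^{1-m}\psi_{xx}^2$.

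The decisive structural point is the coupling at top order. The term $\chi U\psi_x$ in the $\phi$-equation contributes $-\chi\int_0^\infty\psi_x\phi_{xx}$, while the \emph{artificially inserted} term $W^{m-1}\phi_x$ in the $\psi$-equation contributes $-\chi\int_0^\infty\phi_x\psi_{xx}$; together these equal $-\chi\int_0^\infty(\phi_x\psi_x)_x=\chi\,\phi_x(0)\psi_x(0)$, i.e.\ the dangerous second-order cross terms cancel up to a boundary value. This is exactly the payoff of the add-and-subtract trick noted after \eqref{newbound2}. The residual boundary terms at $x=0$ are all of the form (constant)$\times\phi_x(0)^2$, $\psi_x(0)^2$ or $\phi_x(0)\psi_x(0)$; these I would absorb via the trace inequality $|\phi_x(0)|^2\le 2\|\phi_x\|\,\|\phi_{xx}\|$ together with $U\le\bar u$ from \eqref{Un} (so that the unweighted $\|\phi_{xx}\|^2$ is dominated by the weighted dissipation $\int_0^\infty\phi_{xx}^2/U$), and the analogous facts for $\psi$, while boundary terms at $x=\infty$ vanish by the decay of the solution in the energy space.

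For the lower-order linear terms I would substitute the explicit steady state \eqref{3.19} and the identities $U_x=-\chi UV$ and $V=-W_x/W$. A short computation should show that the coefficient of $\phi_x^2$ collapses to $\chi V_x/U\le 0$, a favorable sign since $V$ is decreasing, so that term may simply be discarded; the surviving first-order coupling, of the form $-\chi(\chi+m-1)\int_0^\infty V\phi_x\psi_x$ plus the $\psi$-coefficient terms coming from $-2\varepsilon V\psi_x$ and $-(m-1)UW^{m-1}\psi$, does not cancel pointwise. These I would bound, after a Cauchy--Schwarz split and using that $V^2U/W^{1-m}$ and $U/W^{1-m}=c_0W^{\chi+m-1}$ are bounded on $\R_+$ (here $m\ge1$ and $\chi>|1-m|$ are used), by a fixed multiple of the $L^2$-level dissipation $\int_0^\infty\big(\frac{\phi_x^2}{U}+W^{1-m}\psi_x^2+U\psi^2\big)$. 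The nonlinear terms, all at least cubic in $(\phi_x,\psi_x,\phi_{xx},\psi_{xx})$, I would absorb into the $H^1$ dissipation using $\|(\phi,\phi_x,\psi,\psi_x)\|_{L^\infty}\le N(t)$ and the smallness $N(t)\ll1$. Finally, integrating in time and invoking Lemma~\ref{lem-large1} to control the borrowed $L^2$-dissipation by $\|\phi_0\|_{1,w_1}^2+\|\psi_0\|_{1,w_2}^2$ yields \eqref{3.20}.

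The hard part will be the coupling terms rather than the diffusion: the top-order cross terms only cancel because of the hand-inserted $W^{m-1}\phi_x$, and the remaining first-order coupling cannot be made small by $N(t)$ and must instead be paid for out of the $L^2$ dissipation of Lemma~\ref{lem-large1}, which is precisely why this estimate has to be carried out after, and on top of, the basic estimate. Keeping track of the signs of the boundary terms at $x=0$ and verifying the boundedness of the steady-state coefficient ratios through \eqref{3.19} is the main bookkeeping burden.
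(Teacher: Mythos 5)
Your proposal is correct, but it routes the dangerous top-order coupling through a different mechanism than the paper, and a more elaborate one than is actually needed at this stage. The paper multiplies the first equation of \eqref{newfai} by $\phi_{xx}/U$ and the second by $W^{1-m}\psi_{xx}$ and estimates the two equations essentially \emph{separately}: the second-order cross terms $\chi\int_0^\infty \psi_x\phi_{xx}$ and $\int_0^\infty \phi_x\psi_{xx}$ (the latter already reduced to coefficient one because the inserted $W^{m-1}\phi_x$ meets the weight $W^{1-m}$) are never cancelled, but are simply split by Young's inequality into a small multiple of the $H^1$ dissipation plus the lower-order quantities $\chi^{2}\int_0^\infty U\psi_x^{2}$ and $C\varepsilon^{-1}\int_0^\infty W^{m-1}\phi_x^{2}$; since $m\geq 1$ gives $U\leq \bar u$, $W^{m-1}\leq b^{m-1}$ and $W^{1-m}\geq b^{1-m}$, these are pointwise comparable to $\int_0^\infty W^{1-m}\psi_x^{2}$ and $\int_0^\infty \phi_x^{2}/U$, and after time integration they are paid for by the $L^2$ dissipation of Lemma \ref{lem-large1} --- exactly the mechanism you invoke for your first-order remainder. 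Because no spatial integration by parts is performed on the coupling terms, no boundary contributions at $x=0$ ever arise in the paper's proof; the only boundary terms involve $\phi_t(0,t)=\psi_t(0,t)=0$. Your route --- adding the equations with the matched multipliers $-\left(\phi_x/U\right)_x$ and $-\chi\left(W^{1-m}\psi_x\right)_x$, cancelling the top-order cross terms down to the boundary value $\chi\phi_x(0,t)\psi_x(0,t)$ plus the remainder $-\chi(\chi+m-1)\int_0^\infty V\phi_x\psi_x$, then absorbing the boundary value by trace and the remainder by Lemma \ref{lem-large1} --- is algebraically sound (your identification of the remainder is exactly right, and the absorption arguments go through with the bounds you cite). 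But the cancellation is not the decisive point at the $H^1$ level: it is decisive only in Lemma \ref{lem-large1}, where the cross terms combine into $\chi\int_0^\infty(\phi\psi)_x=0$ thanks to $\phi(0,t)=\psi(0,t)=0$ and there is as yet no dissipation available to absorb them. In short, your argument buys structural symmetry at the price of boundary bookkeeping and trace inequalities, while the paper's buys simplicity by spending the already-established $L^2$ dissipation; both yield \eqref{3.20}.
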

\begin{proof}
Multiplying the first equation of \eqref{newfai} by
$\frac{\phi_{xx}}{U}$, integrating the resultant equation in $x$,
and noting
\begin{equation*}
\begin{split}\frac{\phi_{t}\phi_{xx}}{U}
=\left(\frac{\phi_{t}\phi_{x}}{U}\right)_{x}-\left(\frac{\phi_{x}^{2}}{2U}\right)_{t}+\phi_t\phi_{x}\cdot\frac{U_{x}}{U^{2}},
\end{split}
\end{equation*}
we get
\begin{equation}\label{nn}
\begin{split}
\frac{d}{dt}\int_0^\infty \frac{\phi_x^{2}}{2U} +\int_0^\infty
\frac{\phi_{xx}^{2}}{U}&=-\chi\int_0^\infty\left(\frac{V\phi_{x}}{U}+\psi_{x}\right)\phi_{xx}
-\chi\ii\frac{\phi_{x}\psi_{x}\phi_{xx}}{U}+\ii\phi_t\phi_{x}\cdot\frac{U_{x}}{U^{2}}.
\end{split}\end{equation}
By Young's inequality, the following inequalities hold
$$\chi\left|\left(\frac{V\phi_{x}}{U}+\psi_{x}\right)\phi_{xx}\right|\leq \frac{\phi_{xx}^{2}}{2U}+\frac{\chi^{2}V^{2}\phi_{x}^{2}}{U}+\chi^{2}U\psi_{x}^{2},$$
$$\chi \left|\frac{\phi_{x}\psi_{x}\phi_{xx}}{U}\right|\leq\frac{N(t)\phi_{xx}^{2}}{U}+\frac{N(t)\chi^{2}\phi_{x}^{2}}{4U},$$
where we have used the fact that $\|\psi_x(\cdot,t)\|_{L^\infty}\leq
N(t)$. Similarly, noting $\frac{|U_{x}|}{U^{2}}\leq
\frac{\a\theta}{\b U}$, we have
\[\begin{split}\phi_t\phi_{x}\cdot\frac{U_{x}}{U^{2}}&=(\phi_{xx}+\chi V\phi_{x}+\chi U\psi_{x}+\chi\phi_{x}\psi_{x})\phi_{x}\cdot\frac{U_{x}}{U^{2}}\\
&\leq\frac{\phi_{xx}^{2}}{4U}+\frac{C(1+N(t))\phi_{x}^{2}}{U}+C\phi_{x}^{2}+C\psi_{x}^{2}.\end{split}\]
Thus, it follows from \eqref{nn} that
\begin{equation}\label{newe}
\frac{d}{dt}\int_0^\infty
\frac{\phi_{x}^{2}}{2U}+\left(\frac{1}{4}-N(t)\right)\int_0^\infty
\frac{\phi_{xx}^{2}}{U}\leq C\left(\int_0^\infty
\frac{\phi_{x}^{2}}{U}+\int_0^\infty \psi_{x}^{2}\right),
\end{equation}
which, along with \eqref{3.15} and the fact $W^{1-m}> b^{1-m}$ over $(0,\infty)$ for $m>1$,  leads to
\begin{equation}\label{3.23}
\begin{split}
\int_0^\infty \frac{\phi_x^{2}}{U} +\int_0^t\int_0^\infty
\frac{\phi_{xx}^{2}}{U}\leq C\int_0^\infty
\left(\frac{\phi_{0x}^{2}}{U}+\frac{\phi_{0}^{2}}{U}+
W^{1-m}\psi_{0}^{2}\right).
\end{split}\end{equation}
Multiplying the second equation of \eqref{newfai} by $
W^{1-m}\psi_{xx}$, and using the following inequality
\begin{equation*}
\begin{split}\psi_{t} W^{1-m}\psi_{xx}=( W^{1-m}\psi_{t}\psi_{x})_{x}-(1-m) W^{-m} W_{x}\psi_{t}\psi_{x}-\left( W^{1-m}\frac{\psi_{x}^{2}}{2}\right)_{t},
\end{split}
\end{equation*}
we get
\begin{equation}\label{3.24}
\begin{split}&\frac{1}{2}\frac{d}{dt}\int_0^\infty  W^{1-m}\psi_x^{2}+\varepsilon\int_0^\infty  W^{1-m}\psi_{xx}^{2}\\&=
\ii(2\varepsilon V W^{1-m}\psi_x-\phi_x)\psi_{xx}+\ii(1-e^{-(m-1)\psi})(U+\phi_x)\psi_{xx}\\
&\quad+\varepsilon\ii W^{1-m}\psi_x^{2}\psi_{xx}-(1-m)\int_0^\infty
W^{-m} W_{x}\psi_{t}\psi_{x}.
\end{split}\end{equation}
Furthermore Young's inequality gives rise to the following estimate:
\begin{equation*}
|(2\varepsilon V W^{1-m}\psi_x-\phi_x)\psi_{xx}|\leq
\frac{\varepsilon  W^{1-m}\psi_{xx}^{2}}{4}+8\varepsilon V^2
W^{1-m}\psi_{x}^{2}+\frac{2 W^{m-1}\phi_{x}^{2}}{\varepsilon}.
\end{equation*}
Since $|1-e^{-(m-1)\psi}|\leq C(m-1)|\psi|$ if $N(t)<1$ by Taylor's theorem, we have
\begin{equation*}
\begin{split}&|(1-e^{-(m-1)\psi})(U+\phi_x)\psi_{xx}|\\&\leq C(m-1)(U+|\phi_x|)|\psi\psi_{xx}|\\
&\leq\frac{(\varepsilon+N(t))}{4} W^{1-m}\psi_{xx}^{2}+C(m-1)^2 W^{m-1}U^2\psi^{2}+C(m-1)^2N(t) W^{m-1}\phi_x^2\\
&\leq\frac{(\varepsilon+N(t))}{4}
W^{1-m}\psi_{xx}^{2}+CU\psi^{2}+CN(t)\frac{\phi_x^2}{U}
\end{split}
\end{equation*}
where in view of \eqref{3.19} we have used the fact $ W^{m-1}\leq b^{m-1}$ and
\begin{equation}\label{3.25}
 W^{m-1}U=\frac{b^{m-1}\theta^{2}}{2\beta}\left(1+\frac{\theta r}{\beta}x\right)^{-2}.
\end{equation}
Similarly, since $\|\psi_x(\cdot,t)\|_{L^\infty}\leq N(t)$, we get
\begin{equation*}
-\varepsilon W^{1-m}\psi_{x}^{2}\psi_{xx}\leq \frac{N(t)
W^{1-m}}{2}\psi_{xx}^{2}+\varepsilon^2 N(t)  W^{1-m}\psi_{x}^{2},
\end{equation*}
and
\[\begin{split}-(1-m) W^{-m} W_{x}\psi_{t}\psi_{x}&\leq\frac{ W^{1-m}}{4\varepsilon}\psi_{t}^{2}+\varepsilon(1-m)^2 W^{-1-m} W_{x}^2\psi_{x}^2\\&
\leq\frac{\varepsilon W^{1-m}}{4}\psi_{xx}^{2}+C(W^{1-m}\psi_{x}^2+U\psi^2+\phi_x^2),\end{split}\] where we have used
the second equation of \eqref{newfai} and $ W^{-2}
W_x^2=\frac{\theta^2}{\beta^2}\left(1+\frac{\theta
r}{\beta}x\right)^{-2}\leq\frac{\theta^2}{\beta^2}$. Now integrating
\eqref{3.24} in $t$, we arrive at
\begin{equation*}
\begin{split}&\frac{1}{2}\int_0^\infty  W^{1-m}\psi_x^{2}+\frac{1}{4}\left(\varepsilon-N(t)\right)\int_0^t\int_0^\infty W^{1-m}\psi_{xx}^{2}\\&\leq\frac{1}{2}\int_0^\infty  W^{1-m}\psi_{0x}^{2}+C\int_0^t\int_0^\infty ( W^{1-m}\psi_{x}^{2}+\phi_x^2+U\psi^2)
\end{split}\end{equation*}
which by \eqref{3.15} further gives
\begin{equation}\label{3.26}
\int_0^\infty  W^{1-m}\psi_{x}^{2}+\varepsilon\int_0^t \int_0^\infty
W^{1-m}\psi_{xx}^{2}\leq C\int_0^\infty \left(
W^{1-m}\psi_{0x}^{2}+\frac{\phi_{0}^{2}}{U}+
W^{1-m}\psi_{0}^{2}\right),
\end{equation}
if $N(t)\ll1$. The desired \eqref{3.20} follows from \eqref{3.23}
and \eqref{3.26}.
\end{proof}

The $H^2$ estimate is as follows.
\begin{lemma}\label{H2}
If $N(t)\ll1$, then it follows that
\begin{equation}\label{3.27}
\begin{split}&\int_0^\infty \left(\phi_{xx}^{2}+\psi_{xx}^{2}\right)
+\int_0^t \int_0^\infty
\left(\phi_{xxx}^{2}+\psi_{xxx}^{2}\right)\\&\leq
C(\|\phi_{0xx}\|^2+\|\psi_{0xx}\|^2+\|\phi_{0}\|^2_{1,w_1}+\|\psi_{0}\|^2_{1,w_2})
\end{split}
\end{equation}
where $C>0$ is a constant independent of $t$.
\end{lemma}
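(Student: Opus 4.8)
The plan is to obtain \eqref{3.27} by running $H^1$ energy estimates on the once-differentiated system: I would differentiate each equation of \eqref{newfai} in $x$ and test the results against the third-order derivatives $-\phi_{xxx}$ and $-\psi_{xxx}$ (optionally weighting the second one by $W^{1-m}$ to align with Lemma \ref{H1}), then add the two identities. Testing the differentiated first equation against $-\phi_{xxx}$ produces $\frac12\frac{d}{dt}\|\phi_{xx}\|^2+\|\phi_{xxx}\|^2$ together with bulk terms of the form $-\chi\int(V\phi_x+U\psi_x+\phi_x\psi_x)_x\phi_{xxx}$, while the second equation yields $\frac12\frac{d}{dt}\|\psi_{xx}\|^2+\varepsilon\|\psi_{xxx}\|^2$ together with bulk terms from the advection, the exponential nonlinearity, and the coupling $W^{m-1}\phi_x$.

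For the bulk terms I would expand every $x$-derivative by the product and chain rules and apply Young's inequality. The coefficients $U,V,W,W_x$ and $W^{m-1}$ (the last bounded by $b^{m-1}$ for $m\ge1$) are bounded with the explicit algebraic decay recorded in \eqref{3.19}, while $\|\phi_x\|_{L^\infty},\|\psi_x\|_{L^\infty}\le N(t)$; writing $1-e^{-(m-1)\psi}=O(|\psi|)$ by Taylor expansion keeps the nonlinear $\psi$-terms at the same order. Hence each product splits into a small multiple of the top-order dissipation $\|\phi_{xxx}\|^2+\varepsilon\|\psi_{xxx}\|^2$, to be absorbed on the left, plus lower-order quantities $\|\phi_x\|^2,\|\phi_{xx}\|^2,\|\psi_x\|^2,\|\psi_{xx}\|^2$. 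The key point is that for $m\ge1$ the weights $w_1=1/U\ge1/\bar{u}$ and $w_2=W^{1-m}\ge b^{1-m}$ are bounded below, so $\int_0^t(\|\phi_{xx}\|^2+\|\psi_{xx}\|^2)$ and $\sup_{[0,t]}(\|\phi_x\|^2+\|\psi_x\|^2)$ are already controlled by the weighted quantities from Lemmas \ref{lem-large1} and \ref{H1}; after integrating in time and taking $N(t)\ll1$, all these lower-order contributions are bounded by the initial data.

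The step I expect to be the main obstacle is the boundary terms at $x=0$ generated when the time-derivative term is integrated by parts: the identity $-\int\phi_{x\tau}\phi_{xxx}=\frac12\frac{d}{d\tau}\|\phi_{xx}\|^2+\phi_{x\tau}(0)\phi_{xx}(0)$ (and its $\psi$-analogue) leaves the traces $\phi_{x\tau}(0),\psi_{x\tau}(0)$, which do not vanish because the no-flux (Robin) conditions keep $\phi_x(0,t),\psi_x(0,t)$ free. To control them I would use the boundary relations read off from \eqref{newfai} and \eqref{newbound2}, namely $\phi_{xx}(0)=-\chi(V\phi_x+U\psi_x+\phi_x\psi_x)(0)$ and the analogous identity for $\psi_{xx}(0)$ (in which the exponential term drops out since $\psi(0,t)=0$), turning each boundary product into $\partial_\tau(\text{trace})$ times first-order traces; integrating in $\tau$ and integrating by parts in time converts the leading pieces into boundary-in-time traces like $\phi_x(0,t)^2$, estimated by $|f(0)|^2\le 2\|f\|\,\|f_x\|$ and Young's inequality so that a small multiple of $\|\phi_{xx}(t)\|^2$ is absorbed, the cross terms being organized by the same weighting used in Lemma \ref{lem-large1} to combine into controllable total time derivatives. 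A clean fallback, if the cross terms resist direct estimation, is to exploit $\phi_t(0,t)=\psi_t(0,t)=0$ (immediate from $\phi(0,t)=\psi(0,t)=0$): differentiating \eqref{newfai} in $t$ and testing against $\phi_t,\psi_t$ makes all boundary terms vanish, after which $\sup_t\|\phi_{xx}\|^2$, $\int_0^t\|\phi_{xxx}\|^2$ and their $\psi$-analogues are recovered algebraically by reading the equations as $\phi_{xx}=\phi_t-\chi(V\phi_x+U\psi_x+\phi_x\psi_x)$ and differentiating once more in $x$. Either route closes \eqref{3.27} by Gronwall once $N(t)$ is small.
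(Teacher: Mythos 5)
Your ``clean fallback'' is not a fallback at all: it is, almost verbatim, the paper's proof, and it is the only one of your two routes that actually closes. The paper first bounds $\int_0^t\int_0^\infty\phi_t^2$ and $\int_0^t\int_0^\infty\psi_t^2$ directly from \eqref{newfai} together with Lemmas \ref{lem-large1} and \ref{H1}; it then differentiates \eqref{newfai} in $t$ and tests with $(\phi_t,\psi_t)$, where all boundary terms vanish because $\phi(0,t)=\psi(0,t)=0$ forces $\phi_t(0,t)=\psi_t(0,t)=0$, and where the terms $-\varepsilon\int_0^\infty V_x\psi_t^2$ and $(m-1)\int_0^\infty UW^{m-1}e^{-(m-1)\psi}\psi_t^2$ have favorable signs since $V_x<0$ and $m\ge1$; finally it recovers $\|\phi_{xx}\|^2,\|\psi_{xx}\|^2$ pointwise in time and $\int_0^t\|\phi_{xxx}\|^2,\int_0^t\|\psi_{xxx}\|^2$ algebraically, exactly as you say, by solving \eqref{newfai} and its $x$-derivative for the top spatial derivatives. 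Two refinements to your sketch are needed, though: the constant in \eqref{3.27} must be independent of $t$, so you cannot simply ``close by Gronwall'' (that yields a factor $e^{Ct}$); instead the term $C\int_0^t\int(\phi_t^2+\psi_t^2)$ appearing on the right of the $t$-differentiated energy identity is controlled by the \emph{a priori} uniform bound obtained in the first step. And you need the compatibility condition at $t=0$, i.e. expressing $\|\phi_t(\cdot,0)\|,\|\psi_t(\cdot,0)\|$ through $\|\phi_{0xx}\|,\|\psi_{0xx}\|$ and the weighted norms of the data, to get the right-hand side of \eqref{3.27}.

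Your primary route, by contrast, has a genuine gap at exactly the step you flagged, and your proposed repair does not work. The trace identity $\phi_{xx}(0)=-\chi(V\phi_x+U\psi_x+\phi_x\psi_x)(0)$ is correct, and the diagonal piece $V(0)\phi_{x\tau}(0)\phi_x(0)=\tfrac12 V(0)\partial_\tau\bigl(\phi_x(0)^2\bigr)$ can indeed be integrated in time and absorbed via $|\phi_x(0,t)|^2\le 2\|\phi_x\|\,\|\phi_{xx}\|$. But the cross pieces are not total time derivatives: from the $\phi$-equation one gets a term proportional to $\phi_{x\tau}(0)\psi_x(0)$ and from the $\psi$-equation (whose boundary relation reads $\varepsilon\psi_{xx}(0)=2\varepsilon V(0)\psi_x(0)-b^{m-1}\phi_x(0)+\varepsilon\psi_x(0)^2$) a term proportional to $\psi_{x\tau}(0)\phi_x(0)$, and with the weighting of Lemma \ref{lem-large1} their coefficients are $\chi$ versus $\chi/\varepsilon$ — they do not match, so the two do not assemble into $\partial_\tau\bigl(\phi_x(0)\psi_x(0)\bigr)$ unless the tests are rescaled by a specifically tuned constant. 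Worse, writing a lone cross term as $\partial_\tau\bigl(\phi_x(0)\psi_x(0)\bigr)-\phi_x(0)\psi_{x\tau}(0)$, or handling the cubic boundary terms $\phi_{x\tau}(0)\phi_x(0)\psi_x(0)$ and $\psi_{x\tau}(0)\psi_x(0)^2$, leaves traces of $\phi_{x\tau}$ or $\psi_{x\tau}$ at $x=0$; controlling such a trace requires something like $\|\psi_{x\tau}\|^{1/2}\|\psi_{xx\tau}\|^{1/2}$, i.e. spatial $H^1$ control of $\psi_{x\tau}$, which is neither part of the regularity class $X(0,T)$ nor produced by your energy identity. This is precisely the circularity that the $t$-differentiation argument is designed to avoid, which is why the paper takes that route.
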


\begin{proof} By \eqref{newfai}, \eqref{3.15} and \eqref{3.20}, it is easy to see that
\begin{equation}\label{3.28}
\begin{split}\int_0^t \int_0^\infty \phi_t^2&\leq C\int_0^t \int_0^\infty (\phi_{xx}^{2}+V^2\phi_{x}^{2}+U^2\psi_{x}^{2}+\phi_{x}^{2}\psi_{x}^{2})\\
&\leq C(\|\phi_{0}\|^2_{1,w_1}+\|\psi_{0}\|^2_{1,w_2}),\end{split}
\end{equation}and
\begin{equation}\label{3.29}
\begin{split}
\int_0^t \int_0^\infty \psi_t^2&\leq C\int_0^t \int_0^\infty (\psi_{xx}^{2}+V^2\psi_{x}^{2}+U^2 W^{2(m-1)}\psi^2+ W^{2(m-1)}\phi_{x}^{2}+\psi_{x}^{2}+ W^{2(m-1)}\phi_{x}^{2}\psi^{2})\\
&\leq C(\|\phi_{0}\|^2_{1,w_1}+\|\psi_{0}\|^2_{1,w_2}).\end{split}
\end{equation}
Differentiating \eqref{newfai} with respect to $t$ leads to
\begin{eqnarray}\label{t-equ}
\left\{
\begin{array}{lll}
\phi_{tt}=&\phi_{txx}+\chi V\phi_{tx}+\chi U\psi_{tx}+\chi\phi_{tx}\psi_{x}+\chi\phi_{x}\psi_{tx},\\[2mm]
\psi_{tt}=&\varepsilon\psi_{txx}-2\varepsilon V\psi_{tx}-(m-1)U
W^{m-1}e^{-(m-1)\psi}\psi_t-2\varepsilon\psi_{x}\psi_{tx}
\\[2mm]
&-(m-1) W^{m-1}e^{-(m-1)\psi}\psi_t\phi_{x}+ W^{m-1}e^{-(m-1)\psi}\phi_{tx}.
\end{array}
\right.
\end{eqnarray}
Multiplying the first equation of \eqref{t-equ} by $\phi_t$ and
integrating it in $x$, we get
\begin{equation}\label{3.30}
\begin{split}
&\frac{1}{2}\frac{d}{dt}\int_0^\infty \phi_t^2+\ii\phi_{tx}^2\\&=\chi\ii(V\phi_{tx}+U\psi_{tx}+\phi_{tx}\psi_x+\phi_x\psi_{tx})\phi_t\\
&\leq\left(\frac{1}{4}+N(t)\right)\ii(\phi_{tx}^2+\varepsilon\psi_{tx}^2)+C\ii(V^2+U^2+N(t))\phi_t^2,
\end{split}
\end{equation}
where we have used $\|\psi_x(\cdot,t)\|_{L^\infty}\leq N(t)$ and $\|\phi_x(\cdot,t)\|_{L^\infty}\leq N(t)$ in the above inequality.
Similarly, multiplying the second equation of \eqref{t-equ} by
$\psi_t$ and integrating it in $x$,
\begin{equation}\label{3.31}
\begin{split}
&\frac{1}{2}\frac{d}{dt}\int_0^\infty \psi_t^2+\varepsilon\ii\psi_{tx}^2-\varepsilon\ii V_x\psi_t^2+(m-1)\ii U W^{m-1}e^{-(m-1)\psi}\psi_t^2\\&=\ii( W^{m-1}e^{-(m-1)\psi}\phi_{tx}-2\varepsilon\psi_{x}\psi_{tx}-(m-1) W^{m-1}e^{-(m-1)\psi}\psi_t\phi_{x})\psi_t\\
&\leq \ii\left(\frac{1}{4}\phi_{tx}^2+\varepsilon
N(t)\psi_{tx}^2\right)+C\ii\psi_t^2.
\end{split}
\end{equation}
Thus, combining \eqref{3.30} with \eqref{3.31}, and noticing $V_x<0$,
$m\geq1$ and $N(t)\ll1$, we have
\begin{equation}\label{3.32}
\begin{split}
&\int_0^\infty (\phi_t^2+\psi_t^2)+\int_0^t\ii(\phi_{tx}^2+\psi_{tx}^2)\\
&\leq C\ii(\phi_{0xx}^2+\phi_{0x}^2+\psi_{0x}^2+\psi_{0xx}^2)+ C\int_0^t\ii(\phi_t^2+\psi_t^2)\\
&\leq
C(\|\phi_{0xx}\|^2+\|\psi_{0xx}\|^2+\|\phi_{0}\|^2_{1,w_1}+\|\psi_{0}\|^2_{1,w_2})
\end{split}
\end{equation}
where we have used \eqref{3.28}, \eqref{3.29} and the compatible
condition of the initial data. Using   \eqref{newfai} again, we also
get
\begin{equation}\label{3.33}
\int_0^\infty \phi_{xx}^2\leq
C\int_0^\infty(\phi_t^2+\phi_x^2+\psi_x^2)\leq
C(\|\phi_{0xx}\|^2+\|\psi_{0xx}\|^2+\|\phi_{0}\|^2_{1,w_1}+\|\psi_{0}\|^2_{1,w_2})
\end{equation}
and
\begin{equation*}
\int_0^\infty \psi_{xx}^2\leq
C\int_0^\infty(\psi_t^2+\psi_x^2+\phi_x^2)\leq
C(\|\phi_{0xx}\|^2+\|\psi_{0xx}\|^2+\|\phi_{0}\|^2_{1,w_1}+\|\psi_{0}\|^2_{1,w_2}).
\end{equation*}
Differentiating the first equation of \eqref{newfai} in $x$ yields
\begin{equation*}
\phi_{xxx}=\phi_{tx}-\chi V\phi_{xx}-\chi V_x\phi_{x}-\chi
U\psi_{xx}-\chi
U_x\psi_{x}-\chi\phi_{xx}\psi_{x}-\chi\phi_{x}\psi_{xx},
\end{equation*}
which in combination with \eqref{3.28}, \eqref{3.29} and
\eqref{3.32} leads to
\begin{equation*}
\int_0^t\ii\phi_{xxx}^2\leq
C(\|\phi_{0xx}\|^2+\|\psi_{0xx}\|^2+\|\phi_{0}\|^2_{1,w_1}+\|\psi_{0}\|^2_{1,w_2}).
\end{equation*}
Similarly, differentiating the second equation of \eqref{newfai} in
$x$, and using  \eqref{3.28}, \eqref{3.29} and \eqref{3.32}, we have
\begin{equation}\label{3.36}
\int_0^t\ii\psi_{xxx}^2\leq
C(\|\phi_{0xx}\|^2+\|\psi_{0xx}\|^2+\|\phi_{0}\|^2_{1,w_1}+\|\psi_{0}\|^2_{1,w_2}).\end{equation}
The desired estimate \eqref{3.27} follows from
\eqref{3.33}-\eqref{3.36}.
\end{proof}

\begin{remark}
Proposition \ref{mingti-1} is a consequence of Lemmas \ref{lem-large1}, \ref{H1} and \ref{H2}.
\end{remark}

\subsection{Case $0\leq m<1$}\label{sec.3.2}
As in the case $m\geq1$, we look for solutions of system
\eqref{newfai} with \eqref{newbound1} and \eqref{newbound2} in the
space
\begin{equation*}
\begin{split}
Y(0,T):=\{&(\phi,\psi)(x,t)\big|\phi\in C([0,T]; H^2\cap
H^1_{w_3}),\phi_x\in L^2((0,T);H^2\cap H^1_{w_3}),\\&\psi\in C([0,T];
H^2),\psi_x\in L^2((0,T);H^2)\},
\end{split}\end{equation*}
for $T\in(0,+\infty]$, where $w_3=W^{m-1}/U$. Set
\begin{equation*}
N(t):=\sup_{\tau\in[0,t]}(\|\phi(\cdot,\tau)\|_{1,w_3}+\|\phi_{xx}(\cdot,\tau)\|+\|\psi(\cdot,\tau)\|_{2}).
\end{equation*}

\begin{proposition}\label{global existence-2} Assume that $0\leq
m<1$ and that $\chi$ is large enough. There exists a
constant $\delta_2$, such that if $N(0)\leq\delta_2$, then system
\eqref{newfai}-\eqref{newbound2} has a
unique global solution $(\phi,\psi)\in Y(0,\infty)$ satisfying
\begin{equation}\label{priori-2}
\begin{split}
\|\phi\|_{1,w_3}^2+\|\phi_{xx}\|^2+\|\psi\|_{2}^2
+\int_0^t(\|\phi_x(\tau)\|_{1,w_3}^2+\|\phi_{xx}(\tau)\|^2+\|\psi_{x}(\tau)\|_2^2)d\tau\leq
CN^2(0)
\end{split}
\end{equation}
for any $t\in [0,\infty)$.
\end{proposition}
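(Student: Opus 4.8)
The plan is to follow the same architecture as in the case $m\geq1$: local existence of a solution in $Y(0,T)$ is standard (the reference used for Proposition \ref{global-existence} applies verbatim), so by a continuation argument it suffices to establish the \emph{a priori} estimate \eqref{priori-2} under the running assumption $N(t)\ll1$, and then close the bootstrap using the smallness $N(0)\leq\delta_2$. As before, \eqref{priori-2} is built up through a hierarchy of weighted energy estimates: a basic $L^2$-type estimate, an $H^1$ estimate, and an $H^2$ estimate. The decisive difference from $m\geq1$ is that now $1-m>0$, so the weight $W^{1-m}$ degenerates as $x\to\infty$ (since $W(+\infty)=0$) and cannot be bounded below; conversely $W^{m-1}\to\infty$. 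For this reason $\psi$ is measured in the \emph{unweighted} space $H^2$, while $\phi$ carries the weight $w_3=W^{m-1}/U$.

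For the basic estimate I would multiply the first equation of \eqref{newfai} by $\frac{W^{m-1}}{U}\phi$ and the second by $\chi\psi$, then integrate over $\R_+$. Integrating the diffusion terms by parts yields the two genuine dissipations $\ii\frac{W^{m-1}}{U}\phi_x^2$ and $\chi\varepsilon\ii\psi_x^2$, while expanding $1-e^{-(m-1)\psi}=(m-1)\psi+O(\psi^2)$ in the reaction term produces the additional \emph{good} term $\chi(1-m)\ii UW^{m-1}\psi^2$, whose sign is favorable precisely because $1-m>0$. The two linear coupling terms $\chi\ii W^{m-1}\phi\psi_x$ (from the $\phi$-test) and $\chi\ii W^{m-1}\phi_x\psi$ (from the $\psi$-test) combine into $\chi\ii W^{m-1}(\phi\psi)_x=-\chi\ii (W^{m-1})_x\phi\psi$ after integrating by parts, the boundary contributions vanishing since $\phi(0,t)=\psi(0,t)=0$ by \eqref{newbound2} and both decay at infinity. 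This is exactly the cancellation enabled by the artificially inserted $W^{m-1}\phi_x$ term, and it reduces the highest-order coupling to a lower-order remainder. The choice $w_3=W^{m-1}/U$ is dictated by this structure: the unbounded factor $W^{m-1}$ in the source/cross terms is absorbed by the $\phi$-dissipation via $W^{m-1}|\phi_x|\,|\psi|\leq\eta\frac{W^{m-1}}{U}\phi_x^2+\frac{1}{4\eta}W^{m-1}U\psi^2$, leaving the clean weight $W^{m-1}U$; the $\phi$-only terms arising from $\chi V\phi_x$ and from $(\frac{W^{m-1}}{U})_x$ are controlled exactly as in the case $m\geq1$ through the bounded ratios $|U_x|/U$ and $V$.

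The crux, and the step I expect to be the main obstacle, is controlling the remaining $\psi$-terms, all of which carry one of the weights $UW^{m-1}$, $V^2$, $|V_x|$ or $(W^{m-1})_x=(1-m)W^{m-1}V$. Using \eqref{3.19} and \eqref{3.25} one checks that each of these is comparable to $(1+cx)^{-2}$ with $c=\frac{\theta r}{\beta}$ (up to faster decay), so every such error term reduces to a multiple of $\ii UW^{m-1}\psi^2$ or $\ii(1+cx)^{-2}\psi^2$ with a prefactor that is a power of $\chi$. A portion is absorbed directly into the good dissipation $\chi(1-m)\ii UW^{m-1}\psi^2$; the excess is handled by Hardy's inequality, which is available because $\psi(0,t)=0$: one has $\ii(1+cx)^{-2}\psi^2\leq\frac{1}{c^2}\ii\frac{\psi^2}{x^2}\leq\frac{4}{c^2}\ii\psi_x^2$. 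Since $c=\frac{\theta r}{\beta}$ grows linearly in $\chi$, the Hardy constant $4/c^2$ decays like $\chi^{-2}$ and thereby defeats the polynomial powers of $\chi$ in the prefactors, so that after choosing the Young parameters appropriately each of these contributions becomes a small multiple of $\chi\varepsilon\ii\psi_x^2$ once $\chi\gg1$, with the leftover $\frac{W^{m-1}}{U}\phi^2$ pieces disposed of by Gronwall and the cubic remainders by $N(t)\ll1$. This is exactly where the hypothesis $\chi\gg1$ is used.

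With the basic estimate in hand, the higher-order estimates follow the same template: for the $H^1$ level I would test the $\phi$-equation with $\frac{W^{m-1}}{U}\phi_{xx}$ and the $\psi$-equation with $-\psi_{xx}$, producing the dissipations $\ii\frac{W^{m-1}}{U}\phi_{xx}^2$ and $\varepsilon\ii\psi_{xx}^2$; for the $H^2$ level I would differentiate \eqref{newfai} in $t$ and test with $(\phi_t,\psi_t)$ as in Lemma \ref{H2}. At each level the new error terms again reduce to $(1+cx)^{-2}$-weighted quantities controlled by Hardy's inequality together with $\chi\gg1$ and $N(t)\ll1$. Summing the three levels yields \eqref{priori-2}, which combined with local existence and the continuity argument proves global existence, uniqueness and the stated regularity. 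The essential difficulty throughout remains the basic $L^2$ estimate, where the degeneracy of $W^{1-m}$, the unboundedness of $W^{m-1}$, and the correct pairing of Hardy's inequality with the largeness of $\chi$ must be orchestrated simultaneously.
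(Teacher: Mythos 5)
Your architecture (local existence plus continuation via a priori estimates, the test functions $\frac{W^{m-1}}{U}\phi$ and $\chi\psi$, the cross-term cancellation leaving $-\chi\ii(W^{m-1})_x\phi\psi$, and the Hardy/large-$\chi$ mechanism) coincides with the paper's proof of Proposition \ref{mingti-2}, but your key structural claim has the sign backwards, and the absorption scheme built on it would fail. Linearizing the consumption term in the second equation of \eqref{newfai} gives $-UW^{m-1}(1-e^{-(m-1)\psi})\approx(1-m)UW^{m-1}\psi$, so testing against $\chi\psi$ contributes $+\chi(1-m)\ii UW^{m-1}\psi^2$ to the time derivative of the energy: for $0\leq m<1$ this term is \emph{anti-dissipative}. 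It is the principal bad term of this case, not a ``good dissipation'' into which other errors can be absorbed; the sign is favorable precisely in the opposite regime $m\geq1$ (compare \eqref{s00}), which is why that case needs no largeness of $\chi$. In the paper's Lemma \ref{lem-small1}, the quantity $\int UW^{m-1}\psi^2$ appearing in the dissipation of \eqref{3.38} is not produced by the reaction term: it is \emph{recovered} from half of $\chi\varepsilon\ii\psi_x^2$ via Hardy's inequality (Lemma \ref{hardy}, second line of \eqref{3.43}), and positivity of the combined coefficient $B_2$ is exactly where $\chi\gg1$ enters. Your own Hardy computation (constant $4/c^2\sim\chi^{-2}$ against the prefactor of $UW^{m-1}\sim\chi(1+cx)^{-2}$, compared with the dissipation $\chi\varepsilon\ii\psi_x^2$) is strong enough to dominate this bad term as well, so your framework is repairable; but as written, the step ``a portion is absorbed directly into the good dissipation $\chi(1-m)\ii UW^{m-1}\psi^2$'' invokes a dissipation that does not exist.

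A second, related gap: you assert that the $\phi$-only potential terms ``are controlled exactly as in the case $m\geq1$ through the bounded ratios $|U_x|/U$ and $V$''. In the case $m\geq1$ there was nothing to control, since the potential $\left(\frac{1}{U}\right)_{xx}-\left(\frac{\chi V}{U}\right)_x$ vanishes identically by \eqref{3.17}. For $0\leq m<1$ the corresponding potential $\left(\frac{W^{m-1}}{U}\right)_{xx}-\left(\frac{\chi W^{m-1}V}{U}\right)_x$ does not vanish and is adverse; it must also be dominated by applying Hardy's inequality, this time to the weighted dissipation $\ii\frac{W^{m-1}}{U}\phi_x^2$ (first line of \eqref{3.43}), yielding the coefficient $B_1=\frac{b^{m-1}}{16\beta}\left[\chi+3(1-m)\right]\left[\chi-5(1-m)\right]$, which is positive only for $\chi$ large. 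Finally, $B_1$ and $B_2$ must jointly dominate the surviving cross term $\chi(W^{m-1})_x\phi\psi$, whose weight $\left(1+\frac{\theta r}{\beta}x\right)^{\frac{1-m}{r}-1}$ is the geometric mean of the two potentials' weights rather than $O\left((1+cx)^{-2}\right)$ as you state. The genuine content of the hypothesis $\chi\gg1$ is the simultaneous positivity and domination in these three places — the anti-dissipative reaction term, the non-vanishing $\phi^2$ potential, and the cross term — none of which is correctly located in your accounting.
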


To prove Proposition \ref{global existence-2}, it suffices to derive the
following a priori estimates.

\begin{proposition}\label{mingti-2}
Under the same assumptions of Proposition \ref{global existence-2},
if $(\phi,\psi)\in Y(0,T)$ is a solution of system \eqref{newfai}-\eqref{newbound2} for a constant $T>0$,
then there is a positive constant $\varepsilon_2>0$, independent of
$T$, such that if $N(t)\leq \varepsilon_2$ for any $0\leq t\leq T$,
then $(\phi,\psi)$ satisfies $\eqref{priori-2}$ for any $0\leq t\leq
T$.
\end{proposition}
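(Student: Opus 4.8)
The plan is to establish Proposition~\ref{mingti-2} by the weighted energy method, deriving the same three-tier hierarchy of a priori estimates as in the chain of Lemmas~\ref{lem-large1}--\ref{H2} for the case $m\ge1$: a basic weighted $L^2$ estimate, a weighted $H^1$ estimate, and an $H^2$ estimate. Once these are in hand, \eqref{priori-2} follows, and the continuity (bootstrap) argument using the smallness of $N(t)$ upgrades it to the global bound of Proposition~\ref{global existence-2}. The essential new feature is the choice of weights forced by the profile of $(U,W)$: since $W^{1-m}\to0$ at infinity when $m<1$, the weight $w_2=W^{1-m}$ degenerates and must be abandoned. Instead I would measure $\phi$ in $H^1_{w_3}$ with $w_3=W^{m-1}/U$ and $\psi$ in the unweighted space $H^2$. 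Because $W^{m-1}\ge b^{m-1}$ (as $W\le b$ and $m-1<0$) and $U\le\bar u$ by \eqref{Un}, the weight obeys $w_3\ge b^{m-1}/\bar u>0$; hence $\|\cdot\|_{w_3}$ controls the plain norm and Sobolev embedding gives $\|\phi\|_{L^\infty}+\|\phi_x\|_{L^\infty}+\|\psi\|_{L^\infty}+\|\psi_x\|_{L^\infty}\le CN(t)$, exactly as in Section~\ref{sec.3.1}.

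For the basic estimate I would test the first equation of \eqref{newfai} with $\frac{W^{m-1}}{U}\phi$ and the second with $\chi\psi$, then add. The diffusive terms produce the dissipation $\ii\frac{W^{m-1}}{U}\phi_x^2$ and $\chi\varepsilon\ii\psi_x^2$. The crucial structural difference from the case $m\ge1$ now emerges. There the linear coupling cancelled exactly, because the factor $W^{1-m}$ in the $\psi$-multiplier annihilated the $W^{m-1}$ in its equation and left $\chi\ii(\phi\psi)_x=0$ after using \eqref{newbound2}; here the multiplier $W^{m-1}/U$ no longer cancels $W^{m-1}$, and the same computation leaves the genuinely nonzero coupling term
\be
-\chi\ii (W^{m-1})_x\,\phi\psi.
\ee
Moreover, linearizing the consumption nonlinearity through $1-e^{-(m-1)\psi}=(m-1)\psi-\sum_{n\ge2}\frac{(1-m)^n\psi^n}{n!}$ contributes the zeroth-order term $(1-m)\chi\ii UW^{m-1}\psi^2$, whose coefficient is now \emph{positive}; the sign trick of \eqref{caclu} used $m\ge1$ and indeed fails here, since the corresponding coefficient $\frac{1-m^2+\chi(1-3m)}{2(\chi+m+1)}$ is positive when $m\le1/3$. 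Thus, unlike the case $m\ge1$, one cannot extract the dissipation $\ii U\psi^2$ for free, and these two terms are the main obstacle.

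The resolution --- and the reason the hypotheses $\chi\gg1$ and Hardy's inequality appear --- is that both offending terms live at the decay scale $(1+\frac{\theta r}{\beta}x)^{-2}$, comparable to $V^2=\frac{\theta^2}{\beta^2}(1+\frac{\theta r}{\beta}x)^{-2}$; for instance $UW^{m-1}=\bar u\,b^{m-1}(1+\frac{\theta r}{\beta}x)^{-2}$ by \eqref{3.25} and \eqref{3.19}. Since $\phi(0,t)=\psi(0,t)=0$ by \eqref{newbound2} and $V\le (rx)^{-1}$, Hardy's inequality gives $\ii V^2\psi^2\le \frac{C}{r^2}\ii\psi_x^2$, whence $\ii UW^{m-1}\psi^2=\frac{\beta b^{m-1}}{2}\ii V^2\psi^2\le \frac{C\beta}{r^2}\ii\psi_x^2$, and likewise for $\ii U\psi^2$. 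As $\frac{\beta}{r^2}\sim\varepsilon/\chi$ while the available $\psi$-dissipation carries the factor $\chi\varepsilon$, taking $\chi$ large forces the quotient to be $O(1/\chi)$ and lets these terms be absorbed. The coupling term above is split by Young's inequality into a $\psi$-piece of the same Hardy type and a $\phi$-piece controlled, after a further Hardy estimate using $\phi(0,t)=0$, by the gradient dissipation $\ii\frac{W^{m-1}}{U}\phi_x^2$. This closes the basic estimate, with dissipation $\int_0^t\ii\big(\frac{W^{m-1}}{U}\phi_x^2+\psi_x^2\big)$.

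The weighted $H^1$ and $H^2$ estimates then follow the template of Lemmas~\ref{H1} and~\ref{H2}. I would test the $\phi$-equation with $\frac{W^{m-1}}{U}\phi_{xx}$ and the $\psi$-equation with $\psi_{xx}$ to control $\|\phi_x\|_{w_3}$ and $\|\psi_x\|$ together with their next-order dissipation, and then differentiate \eqref{newfai} in $t$, test with $\phi_t$ and $\psi_t$, and convert $\phi_t,\psi_t$ back to spatial derivatives through the equations to obtain the $H^2$ bounds; the residual zeroth-order bad terms at these levels again sit at the scale absorbed by Hardy and $\chi\gg1$. All genuinely nonlinear contributions ($\chi\phi_x\psi_x$, $\varepsilon\psi_x^2$ and $W^{m-1}(1-e^{-(m-1)\psi})\phi_x$) are at least cubic and are absorbed using the $L^\infty$ bounds above and the smallness of $N(t)$, exactly as in Section~\ref{sec.3.1}. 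Summing the three estimates yields \eqref{priori-2}. The single hardest point throughout is the basic estimate: producing enough dissipation to dominate the unfavorably-signed $\ii UW^{m-1}\psi^2$ and the coupling term, which is possible only because Hardy's inequality trades the bad scale $(1+\frac{\theta r}{\beta}x)^{-2}$ against $\ii\psi_x^2$ and $\chi\gg1$ makes the trade quantitatively favorable.
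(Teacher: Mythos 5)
Your proposal is correct and takes essentially the same route as the paper's proof: the same weights $w_3=W^{m-1}/U$ for $\phi$ and the unweighted norm for $\psi$, the same multipliers $\frac{W^{m-1}}{U}\phi$ and $\chi\psi$, Hardy's inequality (Lemma \ref{hardy}) to trade the gradient dissipations for zeroth-order control at the scales $\left(1+\frac{\theta r}{\beta}x\right)^{\frac{2-2m}{r}}$ and $\left(1+\frac{\theta r}{\beta}x\right)^{-2}$, largeness of $\chi$ to make the resulting quadratic form in $(\phi,\psi)$ positive definite, and the same $H^1$/$H^2$ continuation via testing with $\frac{W^{m-1}}{U}\phi_{xx}$, $\psi_{xx}$ and the time-differentiated system. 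The only bookkeeping item you omit is the third linear-level bad term $-\int_0^\infty\frac{\phi^2}{2}\bigl[\bigl(\tfrac{W^{m-1}}{U}\bigr)_{xx}-\bigl(\tfrac{\chi W^{m-1}V}{U}\bigr)_x\bigr]$, which no longer vanishes for $m<1$ (the identity \eqref{3.17} cancels it only for the weight $1/U$) and is negatively signed of size $O(\chi)$; however it sits at the scale $\left(1+\frac{\theta r}{\beta}x\right)^{\frac{2-2m}{r}}$ and is dominated by exactly the Hardy-converted $\phi_x$-dissipation of size $O(\chi^2)$ that you already invoke for the coupling term, which is precisely the paper's computation of $B_1$.
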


The following Hardy inequality plays an important role in
establishing the a priori estimates.
\begin{lemma}[Hardy inequality]\label{hardy}
If $f\in H_0^1(0,\infty)$, then for $j\neq-1$, it holds that
\begin{equation*}
\ii(1+kx)^jf^2(x)dx\leq\frac{4}{(j+1)^2k^2}\ii(1+kx)^{j+2}f_x^2(x)dx,
\end{equation*}
where $k>0$ is a constant.
\end{lemma}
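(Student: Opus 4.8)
The plan is to reduce the whole inequality to one integration by parts followed by a single application of the Cauchy--Schwarz inequality, exploiting that the weight $(1+kx)^j$ is, up to a constant, the derivative of the heavier weight $(1+kx)^{j+1}$. Since $j\neq-1$, I would first record the elementary identity
\[
(1+kx)^j=\frac{1}{k(j+1)}\,\frac{d}{dx}(1+kx)^{j+1},
\]
and abbreviate $I:=\ii(1+kx)^jf^2\,dx$ and $J:=\ii(1+kx)^{j+2}f_x^2\,dx$, so that the goal is exactly $I\le \frac{4}{(j+1)^2k^2}J$. Substituting the identity into $I$ turns it into an integral against the derivative of $(1+kx)^{j+1}$, which is primed for integration by parts.

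Next I would integrate by parts in $I$. The boundary term at $x=0$ vanishes because $f(0)=0$ (this is what $f\in H_0^1$ buys us), and the boundary term at $x=+\infty$ vanishes for functions $f\in C_c^\infty(0,\infty)$; this reduces $I$ to
\[
I=-\frac{2}{k(j+1)}\ii (1+kx)^{j+1}f f_x\,dx.
\]
I would then split the weight symmetrically as $(1+kx)^{j+1}=(1+kx)^{j/2}\,(1+kx)^{(j+2)/2}$ and apply Cauchy--Schwarz, which gives
\[
I\le \frac{2}{k|j+1|}\Big(\ii(1+kx)^{j}f^2\,dx\Big)^{1/2}\Big(\ii(1+kx)^{j+2}f_x^2\,dx\Big)^{1/2}=\frac{2}{k|j+1|}\,I^{1/2}J^{1/2}.
\]
If $I=0$ the claim is trivial; otherwise dividing by $I^{1/2}>0$ and squaring yields $I\le \frac{4}{(j+1)^2k^2}J$, as desired. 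The general $f\in H_0^1(0,\infty)$ would then follow by approximating $f$ by functions in $C_c^\infty(0,\infty)$ and passing to the limit, using Fatou's lemma on the left-hand side.

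The hard part will be justifying the vanishing of the boundary term $(1+kx)^{j+1}f^2$ at $x=+\infty$: when $j>-1$ the weight \emph{grows}, so mere membership in $H^1$ does not obviously force this term to zero. This is precisely why I would carry out the computation first for compactly supported $f$ (where the term is identically $0$) and only afterward invoke density, the finiteness of $J$ being what makes the limiting procedure legitimate. I note that the hypothesis $j\neq-1$ enters twice and is indispensable: it is needed both to rewrite the weight as a derivative (the factor $1/(j+1)$) and to perform the final division, so the argument genuinely degenerates at $j=-1$.
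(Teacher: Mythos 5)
Your proof is correct and follows essentially the same route as the paper's: reduce to $f\in C_0^\infty(0,\infty)$ by density, rewrite $(1+kx)^j$ as $\frac{1}{k(j+1)}\frac{d}{dx}(1+kx)^{j+1}$, integrate by parts (boundary terms vanishing for compactly supported $f$), apply Cauchy--Schwarz with the symmetric splitting $(1+kx)^{j+1}=(1+kx)^{j/2}(1+kx)^{(j+2)/2}$, and finally divide by $I^{1/2}$ and square. The only difference is cosmetic: you flag the vanishing of the boundary term at infinity and the limiting procedure more explicitly than the paper, which disposes of both with a one-line citation to a density argument.
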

\begin{proof}
Since $C_0^\infty(0,\infty)$ is dense in $H_0^1(0,\infty)$, by density argument (cf. \cite[Section 50.3]{QS}), we only consider $f\in
C_0^\infty(0,\infty)$. Then by Cauchy-Schwarz inequality, for
$j\neq-1$, we have
\[\begin{split}
\ii(1+kx)^jf^2(x)dx&=\frac{1}{(j+1)k}\ii f^2(x)d((1+kx)^{j+1})\\
&=\frac{2}{(j+1)k}\ii
(1+kx)^{j+1}f(x)f_x(x)dx\\&\leq\frac{2}{|(j+1)k|}\left(\ii
(1+kx)^jf^2(x)dx\right)^{\frac{1}{2}}\left(\ii
(1+kx)^{j+2}f_x^2(x)dx\right)^{\frac{1}{2}}\end{split}\]
which complete the proof.
\end{proof}

We now derive the $L^2$ estimate.
\begin{lemma}\label{lem-small1}
If $N(t)\ll1$, then there is a constant $C>0$ independent of $t$ such that the solution of system \eqref{newfai}-\eqref{newbound2} satisfies
\begin{equation}\label{3.38}
\begin{split}&\int_0^\infty \left(\frac{ W^{m-1}\phi^{2}}{U}+\psi^{2}\right)
+\int_0^t \int_0^\infty \left(\frac{
W^{m-1}\phi_{x}^{2}}{U}+\psi_{x}^{2}+U W^{m-1}\psi^2\right)
\\&\leq C\int_0^\infty \left(\frac{ W^{m-1}\phi_{0}^{2}}{U}+ \psi_{0}^{2}\right).
\end{split}
\end{equation}
\end{lemma}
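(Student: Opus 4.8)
The plan is to run a weighted $L^2$ energy estimate with the two multipliers dictated by the weights appearing in the statement: multiply the first equation of \eqref{newfai} by $w_3\phi=\frac{W^{m-1}}{U}\phi$, multiply the second by $\chi\psi$, integrate over $\R_+$, and add. The time-derivative terms produce $\frac12\frac{d}{dt}\ii(\frac{W^{m-1}\phi^2}{U}+\chi\psi^2)$, while the two diffusion terms, after integration by parts (the boundary contributions at $x=0$ vanish since $\phi(0,t)=\psi(0,t)=0$ by \eqref{newbound2}, and at $x=\infty$ by decay), yield the dissipation $-\ii\frac{W^{m-1}\phi_x^2}{U}-\chi\varepsilon\ii\psi_x^2$ up to a commutator term $-\ii(\frac{W^{m-1}}{U})_x\phi\phi_x$. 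The reaction term $-\chi\ii UW^{m-1}(1-e^{-(m-1)\psi})\psi$, expanded by $1-e^{-(m-1)\psi}=(m-1)\psi-\sum_{n\ge2}\frac{(1-m)^n\psi^n}{n!}$, furnishes the leading coercive contribution $\chi(1-m)\ii UW^{m-1}\psi^2$, which is nonnegative precisely because $m<1$; this is the third dissipation term in \eqref{3.38}. The remaining terms of this expansion, together with the genuinely nonlinear terms $\chi\phi_x\psi_x$, $-\varepsilon\psi_x^2$ and $-W^{m-1}(1-e^{-(m-1)\psi})\phi_x$, are cubic or higher and are absorbed using $N(t)\ll1$ and the Sobolev $L^\infty$ bounds, exactly as in Lemma \ref{lem-large1}.

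The decisive point is the cancellation of the linear cross terms. The advection $\chi U\psi_x$ in the first equation, paired with $\frac{W^{m-1}}{U}\phi$, gives $\chi\ii W^{m-1}\phi\psi_x$, while the term $W^{m-1}\phi_x$ in the second equation, paired with $\chi\psi$, gives $\chi\ii W^{m-1}\phi_x\psi$; their sum equals $\chi\ii(W^{m-1}\phi\psi)_x-\chi\ii(W^{m-1})_x\phi\psi$, and the exact-derivative integral vanishes by the boundary conditions. This is exactly the role of the term $W^{m-1}\phi_x$ that was artificially inserted into the second equation of \eqref{newfai}. Using $(W^{m-1})_x=(1-m)VW^{m-1}$, the surviving cross term is $-\chi(1-m)\ii VW^{m-1}\phi\psi$, and the other linear terms $\chi V\phi_x\cdot\frac{W^{m-1}}{U}\phi$ and $-2\chi\varepsilon V\psi_x\cdot\psi$ become, after integration by parts, $-\frac\chi2\ii(\frac{VW^{m-1}}{U})_x\phi^2$ and $\chi\varepsilon\ii V_x\psi^2$, the latter having a favourable sign since $V_x<0$.

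The main obstacle is controlling $-\chi(1-m)\ii VW^{m-1}\phi\psi$ and the leftover $\phi^2$-integrals, since the available $\phi$-dissipation is $\ii\frac{W^{m-1}\phi_x^2}{U}$ rather than an undifferentiated quantity; here the Hardy inequality of Lemma \ref{hardy} is essential. By Young's inequality I would split $\chi(1-m)|VW^{m-1}\phi\psi|\le\eta\,UW^{m-1}\psi^2+C_\eta\frac{V^2W^{m-1}}{U}\phi^2$, sending the $\psi$-part into the coercive term. From the explicit formulas \eqref{3.19} one checks $\frac{W^{m-1}}{U}=C(1+kx)^{p}$ with $k=\frac{\theta r}{\beta}$ and $p=\frac{2(\chi+1-m)}{\chi+m-1}>1$, while $V^2=\frac{\theta^2}{\beta^2}(1+kx)^{-2}$, so that $\frac{V^2W^{m-1}}{U}\phi^2=C'(1+kx)^{p-2}\phi^2$. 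Applying Lemma \ref{hardy} with $j=p-2\neq-1$ gives
\[
\ii(1+kx)^{p-2}\phi^2\le\frac{4}{(p-1)^2k^2}\ii(1+kx)^{p}\phi_x^2=\frac{C''}{(p-1)^2}\ii\frac{W^{m-1}\phi_x^2}{U},
\]
trading the missing zeroth-order bound for the first-order dissipation; the commutator term and $-\frac\chi2\ii(\frac{VW^{m-1}}{U})_x\phi^2$ are handled by the same Young-plus-Hardy argument. The hypothesis $\chi\gg1$ is what forces all these absorbing constants to be strictly below the dissipation coefficients: as $\chi\to\infty$ one has $p\to2$ while the Hardy and coupling constants remain bounded, so the coercive terms dominate, the estimate closes, and integrating in $t$ yields \eqref{3.38}.
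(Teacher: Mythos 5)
Your overall scaffolding---the multipliers $\frac{W^{m-1}}{U}\phi$ and $\chi\psi$, the cancellation of the linear cross terms via the artificially inserted $W^{m-1}\phi_x$, and the use of Lemma \ref{hardy} to trade weighted $\phi^2$-integrals for the dissipation $\ii\frac{W^{m-1}\phi_x^2}{U}$---coincides with the paper's proof. But there is a fatal sign error at the heart of your argument: the reaction term is \emph{not} coercive when $0\le m<1$. Keeping only that term, the second equation of \eqref{newfai} linearizes to $\psi_t\approx-(m-1)UW^{m-1}\psi=+(1-m)UW^{m-1}\psi$, which for $m<1$ is a positive-feedback term. The quantity $\chi(1-m)\ii UW^{m-1}\psi^2$ you compute is indeed nonnegative, but it sits on the \emph{source} side of the energy balance (equivalently, moved to the left it enters as $-\chi(1-m)\ii UW^{m-1}\psi^2$ and subtracts from the dissipation); it cannot be ``the third dissipation term in \eqref{3.38}.'' Compare with the case $m\ge1$ in Lemma \ref{lem-large1}, where this same term has the favorable sign and does produce the $\ii U\psi^2$ dissipation: you have carried that structure over to $m<1$, where it reverses, and this reversal is precisely the main difficulty of the whole case.

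Consequently your closing step collapses: the plan to absorb the Young-split piece $\eta\,UW^{m-1}\psi^2$ of the cross term ``into the coercive term'' has nothing to absorb into; on the contrary, the reaction term itself must be dominated. The paper's proof does this by observing from \eqref{3.25} that the bad term has the exact weight $UW^{m-1}=\frac{b^{m-1}\theta^{2}}{2\beta}\bigl(1+\frac{\theta r}{\beta}x\bigr)^{-2}$, and by manufacturing good terms of the same weight from two sources: $-\chi\varepsilon\ii V_x\psi^2$ (favorable since $V_x<0$) and Lemma \ref{hardy} applied to half of the dissipation $\chi\varepsilon\ii\psi_x^2$. One must then verify that the net coefficient (the quantity $B_2$ in the paper's proof) is positive and, further, that the full pointwise quadratic form in $(\phi,\psi)$---including the surviving cross term $\chi(W^{m-1})_x\phi\psi$, whose weight is exactly the geometric mean of the two diagonal weights---is positive definite. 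It is in these two checks, not in the boundedness of Hardy constants as $p\to2$, that the hypothesis $\chi\gg1$ is genuinely used. As written, your energy inequality does not close.
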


\begin{proof}
Multiplying the first equation of \eqref{newfai} by $\frac{
W^{m-1}\phi}{U}$ and  the second one  by $\chi \psi$, integrating
the resultant equations in $x$, we have
\begin{equation}\label{3.39}
\begin{split}&\frac{1}{2}\frac{d}{dt}\int_0^\infty \left(\frac{ W^{m-1}\phi^{2}}{U}+\chi\psi^{2}\right)+\int_0^\infty \frac{ W^{m-1}\phi_{x}^{2}}{U}+\chi\varepsilon\int_0^\infty \psi_{x}^{2}+\chi\ii( W^{m-1})_x\phi\psi\\&\quad-\int_0^\infty \frac{\phi^{2}}{2}\left[\left(\frac{ W^{m-1}}{U}\right)_{xx}-\left(\frac{\chi  W^{m-1}V}{U}\right)_{x}\right]-\chi\int_0^\infty \psi^{2}\left[\varepsilon V_{x}+(1-m)U W^{m-1}\right]\\&=\chi\int_0^\infty\frac{ W^{m-1}\phi\phi_{x}\psi_{x}}{U}-\chi\varepsilon\int_0^\infty \psi\psi_{x}^{2}+\chi(1-m)\int_0^\infty W^{m-1}\phi_{x}\psi^{2}\\&\quad+\chi\int_0^\infty W^{m-1}(U\psi+\phi_{x}\psi)\sum_{n=2}^{\infty}\frac{(1-m)^{n}\psi^{n}}{n!}.
\end{split}\end{equation}
A direct calculation by \eqref{3.17} and \eqref{3.19} yields
\begin{equation}\label{3.40}
\chi( W^{m-1})_x=\frac{(1-m)\chi\theta b
^{m-1}}{\b}\left(1+\frac{\theta r}{\beta}x\right)^{\frac{1-m}{r}-1},
\end{equation}
\begin{equation*}
\begin{split}
&-\frac{1}{2}\left[\left(\frac{W^{m-1}}{U}\right)_{xx}-\left(\frac{\chi W^{m-1}V}{U}\right)_{x}\right]\\
&=-\frac{1}{2U}\left[(W^{m-1})_{xx}-\chi( W^{m-1})_xV-2(W^{m-1})_x\frac{U_x}{U}\right]
-\frac{ W^{m-1}}{2}\left[\left(\frac{1}{U}\right)_{xx}-\left(\frac{\chi V}{U}\right)_{x}\right]\\
&=-\frac{1}{2U}\left[( W^{m-1})_{xx}+\chi( W^{m-1})_xV\right]\\
&=\frac{(1-m)b^{m-1}}{\b}((m-1+r)-\k)\left(1+\frac{\theta r}{\beta}x\right)^{\frac{2-2m}{r}},
\end{split}
\end{equation*}
and
\begin{equation*}
\begin{split}
&-\chi\left[\varepsilon V_{x}+(1-m)U
W^{m-1}\right]=\frac{\chi\theta^2}{\b}\left(\frac{\varepsilon
r}{\b}-\frac{(1-m)b^{m-1}}{2}\right)\left(1+\frac{\theta
r}{\beta}x\right)^{-2}.
\end{split}
\end{equation*}
By Lemma \ref{hardy} with \eqref{notion}, we have
\begin{equation}\label{3.43}
\begin{split}
\frac{1}{2}\int_0^\infty \frac{ W^{m-1}\phi_{x}^{2}}{U}&=\frac{\beta b^{m-1}}{\theta^2}\ii\left(1+\frac{\theta r}{\beta}x\right)^{\frac{\chi+1-m}{r}}\phi_{x}^{2}\\&\geq\frac{b^{m-1}}{4\beta}(\chi+1-m-r)^2\ii\left(1+\frac{\theta r}{\beta}x\right)^{\frac{2-2m}{r}}\phi^2,\\
\frac{\chi\varepsilon}{2}\int_0^\infty
\psi_{x}^{2}&\geq\frac{\chi\varepsilon\theta^2r^2}{8\beta^2}\ii\left(1+\frac{\theta
r}{\beta}x\right)^{-2}\psi^2.
\end{split}\end{equation}
Moreover, a direct calculation in view of \eqref{notion} gives
\[\begin{split}
&\frac{(1-m)b^{m-1}}{\b}((m-1+r)-\k)+\frac{b^{m-1}}{4\beta}(\chi+1-m-r)^2\\
&=\frac{b^{m-1}}{16\b}\left[8(1-m)(\a+3m-3)-16\a(1-m)+(\a-3m+3)^2\right]\\
&=\frac{b^{m-1}}{16\b}[\a+3(1-m)][\a-5(1-m)]\\&\triangleq B_1,
\end{split}\]
and
\[\begin{split}
&\frac{\chi\theta^2}{\b}\left(\frac{\varepsilon r}{\b}-\frac{(1-m)b^{m-1}}{2}\right)+\frac{\chi\varepsilon\theta^2r^2}{8\beta^2}\\
&=\frac{\k\theta^2 b^{m-1}}{2\b}\left[\frac{(\a+m-1)^2}{16(\a+m+1)}+m-\frac{2}{\a+m+1}\right]\\
&\triangleq B_2.
\end{split}\]
Now substituting \eqref{3.40}-\eqref{3.43} into \eqref{3.39}, and
noting when $\chi\gg1$, there exists a constant $C_1>0$ such that
\[\begin{split}
&B_1\left(1+\frac{\theta r}{\beta}x\right)^{\frac{2-2m}{r}}\phi^2+B_2\left(1+\frac{\theta r}{\beta}x\right)^{-2}\psi^2+\frac{(1-m)\chi\theta b ^{m-1}}{\b}\left(1+\frac{\theta r}{\beta}x\right)^{\frac{1-m}{r}-1}\phi\psi\\
&\geq C_1\left(\left(1+\frac{\theta
r}{\beta}x\right)^{\frac{2-2m}{r}}\phi^2+\left(1+\frac{\theta
r}{\beta}x\right)^{-2}\psi^2\right),
\end{split}\]
one can see that
\begin{equation}\begin{split}\label{3.44}
\text{LHS of }
\eqref{3.39}\geq&\frac{1}{2}\frac{d}{dt}\int_0^\infty \left(\frac{
W^{m-1}\phi^{2}}{U}+\chi\psi^{2}\right)+\frac{1}{2}\int_0^\infty
\left(\frac{W^{m-1}\phi_{x}^{2}}{U}+
\chi\varepsilon\psi_{x}^{2}\right)\\&+C_1\ii\left(\left(1+\frac{\theta
r}{\beta}x\right)^{\frac{2-2m}{r}}\phi^2+\left(1+\frac{\theta
r}{\beta}x\right)^{-2}\psi^2\right).\end{split}
\end{equation}

The RHS of \eqref{3.39} can be estimated as follows. It is easy to
see that
\begin{equation*}\begin{split}
\frac{ W^{m-1}\phi^{2}}{U}=\int_0^x \left(\frac{
W^{m-1}\phi^{2}}{U}\right)_{x}
&=\int_0^x \left(\frac{2 W^{m-1}\phi\phi_{x}}{U}+\frac{( W^{m-1})_x\phi^2}{U}-\frac{\phi^{2} W^{m-1}U_{x}}{U^2}\right)\\
&\leq C\int_0^\infty \frac{ W^{m-1}\phi^{2}}{U}+\int_0^\infty \frac{
W^{m-1}\phi_{x}^{2}}{U}\\& \leq CN^{2}(t),\end{split}
\end{equation*}
which implies $\frac{\sqrt{ W^{m-1}}}{\sqrt{U}}|\phi|\leq CN(t).$
Hence
\begin{equation}\begin{split}\label{3.45}
\chi\int_0^\infty \frac{ W^{m-1}|\phi\phi_{x}\psi_{x}|}{U} \leq
CN(t)\int_0^\infty \frac{ W^{m-1}\phi_{x}^{2}}{U}+\chi\varepsilon
N(t)\int_0^\infty \psi_{x}^{2}.\end{split}
\end{equation}
By Young's inequality and  \eqref{3.25}, one has
\[\begin{split}
\chi(1-m)\int_0^\infty  W^{m-1}\phi_{x}\psi^{2}&\leq CN(t)\ii\frac{ W^{m-1}\phi_{x}^{2}}{U}+CN(t)\ii W^{m-1}U\psi_{x}^{2}\\
&\leq CN(t)\ii\frac{W^{m-1}\phi_{x}^{2}}{U}+CN(t)\ii\psi_{x}^{2}.\end{split}\]
By Lemma \ref{hardy} and  \eqref{3.25} again, we get
\begin{equation*}\begin{split}
&\left|\int_0^\infty
W^{m-1}U\psi\sum_{n=2}^{\infty}\frac{(1-m)^{n}\psi^{n}}{n!}\right| \leq C\int_0^\infty  W^{m-1}U|\psi|^{3}\\&
\leq CN(t)\int_0^\infty \left(1+\frac{\theta r}{\beta}x\right)^{-2}\psi^2 \leq CN(t)\ii\psi_x^2,\end{split}
\end{equation*}
and
\begin{equation}\begin{split}\label{3.47}
\left|\int_0^\infty
W^{m-1}\phi_{x}\psi\sum_{n=2}^{\infty}\frac{(1-m)^{n}\psi^{n}}{n!}\right|
&\leq C\int_0^\infty |\phi_{x}||\psi|^{3}\\&
\leq CN(t)\int_0^\infty \frac{ W^{m-1}\phi_{x}^{2}}{U}+CN(t)\int_0^\infty W^{m-1}U\psi^{2}\\
&\leq CN(t)\int_0^\infty \frac{
W^{m-1}\phi_{x}^{2}}{U}+CN(t)\int_0^\infty\psi_x^2.\end{split}
\end{equation}
Now substituting \eqref{3.44}, \eqref{3.45}-\eqref{3.47} into
\eqref{3.39}, we have
\begin{equation*}
\begin{split}&\int_0^\infty \left(\frac{ W^{m-1}\phi^{2}}{U}+\chi \psi^{2}\right)
+(1-CN(t))\int_0^t\int_0^\infty \frac{ W^{m-1}\phi_{x}^{2}}{U}+(\chi\varepsilon-CN(t))\int_0^t \int_0^\infty \psi_{x}^{2}\\
&\leq \int_0^\infty \left(\frac{
W^{m-1}\phi_{0}^{2}}{U}+\chi\psi_{0}^{2}\right).
\end{split}
\end{equation*}
Therefore, \eqref{3.38} holds provided that $N(t)\ll1$.
\end{proof}

\begin{lemma}\label{H1b}
If $N(t)\ll1$, then
\begin{equation}\label{3.49}
\begin{split}\int_0^\infty \left(\frac{ W^{m-1}\phi_x^{2}}{U}+\psi_x^{2}\right)
+\int_0^t \int_0^\infty \left(\frac{
W^{m-1}\phi_{xx}^{2}}{U}+\psi_{xx}^{2}\right) \leq
C(\|\phi_{0}\|^2_{1,w_3}+\|\psi_{0}\|^2_{1}).
\end{split}
\end{equation}

\end{lemma}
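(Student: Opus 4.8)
The plan is to differentiate once in $x$ and run the weighted multiplier scheme already used for Lemma~\ref{H1}, but now with the weight $w_3=W^{m-1}/U$ on the $\phi$-equation and an \emph{unweighted} multiplier on the $\psi$-equation (matching the space $Y(0,T)$). Concretely, I would multiply the first equation of \eqref{newfai} by $\frac{W^{m-1}}{U}\phi_{xx}$ and the second by $\psi_{xx}$, integrate over $\R_+$, and use the boundary conditions \eqref{newbound2} (which force $\phi(0,t)=\psi(0,t)=0$, hence $\phi_t(0,t)=\psi_t(0,t)=0$) to discard all boundary contributions. The time-transport terms are integrated by parts through the identities
\[
\tfrac{W^{m-1}}{U}\phi_t\phi_{xx}=\partial_x\big(\tfrac{W^{m-1}}{U}\phi_t\phi_x\big)-\partial_t\big(\tfrac{W^{m-1}}{2U}\phi_x^2\big)-\big(\tfrac{W^{m-1}}{U}\big)_x\phi_t\phi_x,\quad \psi_t\psi_{xx}=\partial_x(\psi_t\psi_x)-\partial_t\big(\tfrac12\psi_x^2\big),
\]
which place the two dissipation integrals $\int_0^\infty\frac{W^{m-1}}{U}\phi_{xx}^2$ and $\varepsilon\int_0^\infty\psi_{xx}^2$ on the left --- precisely the quantities in \eqref{3.49}.

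The remaining work is to absorb every term on the right into a small multiple of these two dissipations plus the first-order quantities $\frac{W^{m-1}}{U}\phi_x^2$, $\psi_x^2$ and $UW^{m-1}\psi^2$ already controlled by the $L^2$-bound \eqref{3.38}. All the needed algebraic facts are read off from the explicit formulas \eqref{3.19}: $V$ is bounded, the logarithmic derivative of the weight $a:=W^{m-1}/U$ obeys $\frac{|a_x|}{a}=\frac{(\chi+1-m)\theta}{\beta}\big(1+\frac{\theta r}{\beta}x\big)^{-1}\le C$, and --- most importantly --- the product $UW^{m-1}=\frac{b^{m-1}\theta^2}{2\beta}\big(1+\frac{\theta r}{\beta}x\big)^{-2}$ from \eqref{3.25} is bounded. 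This last fact rescues the scheme when $0\le m<1$, since $W^{m-1}$ itself blows up as $x\to\infty$: whenever $W^{m-1}$ would appear alone I pair it with one factor of $U$ so that the prefactor stays bounded, e.g.\ $\chi W^{m-1}\psi_x\phi_{xx}\le\delta\frac{W^{m-1}}{U}\phi_{xx}^2+C(UW^{m-1})\psi_x^2\le\delta\frac{W^{m-1}}{U}\phi_{xx}^2+C\psi_x^2$ and $W^{m-1}\phi_x\psi_{xx}\le\delta\varepsilon\psi_{xx}^2+C(W^{m-1})^2\phi_x^2\le\delta\varepsilon\psi_{xx}^2+C\frac{W^{m-1}}{U}\phi_x^2$. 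The extra term $(\frac{W^{m-1}}{U})_x\phi_t\phi_x$ is handled by substituting $\phi_t$ from \eqref{newfai} and using $|a_x|/a\le C$; the exponential nonlinearity is treated exactly as in the $L^2$ estimate via $|1-e^{-(m-1)\psi}|\le C(1-m)|\psi|$ (valid once $N(t)<1$), the factor $UW^{m-1}$ reappearing through $(UW^{m-1})^2\le C\,UW^{m-1}$, while the genuinely cubic remainders carry a power of $N(t)$ and are negligible.

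Collecting these estimates, for $\delta$ small and $N(t)\ll1$ I obtain differential inequalities
\[
\frac{d}{dt}\int_0^\infty\frac{W^{m-1}}{2U}\phi_x^2+c\int_0^\infty\frac{W^{m-1}}{U}\phi_{xx}^2\le C\int_0^\infty\Big(\frac{W^{m-1}}{U}\phi_x^2+\psi_x^2\Big),
\]
\[
\frac{d}{dt}\int_0^\infty\frac12\psi_x^2+c\int_0^\infty\psi_{xx}^2\le C\int_0^\infty\Big(\psi_x^2+\frac{W^{m-1}}{U}\phi_x^2+UW^{m-1}\psi^2\Big).
\]
Integrating in $t$ and invoking \eqref{3.38} to bound the time integrals of the right-hand sides then yields \eqref{3.49}.

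The main obstacle I anticipate is purely the bookkeeping forced by the unbounded weight $W^{m-1}$ for $0\le m<1$: the entire argument hinges on never letting $W^{m-1}$ stand alone but always pairing it with the decaying $U$, so that the bounded combination $UW^{m-1}$ does the work, while the bounded ratio $a_x/a$ controls the commutator term generated by moving the time derivative past the $x$-dependent weight. A secondary point worth flagging is that, in contrast to the $L^2$ estimate, the coercivity here comes directly from the multipliers rather than from Hardy's inequality; Lemma~\ref{hardy} enters only indirectly, through the $L^2$ bound \eqref{3.38} that we use to close the $t$-integration.
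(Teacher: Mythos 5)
Your proposal is correct and follows essentially the same route as the paper: the paper likewise multiplies the first equation of \eqref{newfai} by $\frac{W^{m-1}}{U}\phi_{xx}$ and the second by $\psi_{xx}$, exploits the boundedness of $V$, of $UW^{m-1}$ (via \eqref{3.25}) and of the logarithmic derivative of the weight to absorb everything into the two dissipation terms plus quantities controlled by \eqref{3.38}, handles the commutator term by substituting $\phi_t$ from the equation, and closes by integrating in $t$ and invoking Lemma \ref{lem-small1}. The only cosmetic difference is that where you keep $UW^{m-1}\psi^2$ and cite the $L^2$ bound, the paper converts such terms into $\psi_x^2$; both are valid and equivalent in effect.
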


\begin{proof}
Multiplying the first equation of \eqref{newfai} by $\frac{
W^{m-1}\phi_{xx}}{U}$ yields
\begin{equation*}
\begin{split}
\frac{d}{dt}\int_0^\infty \frac{ W^{m-1}\phi_x^{2}}{2U}
+\int_0^\infty \frac{ W^{m-1}\phi_{xx}^{2}}{U}&=-\chi\int_0^\infty
W^{m-1}\left(\frac{V\phi_{x}}{U}+\psi_{x}
+\frac{\phi_{x}\psi_{x}}{U}\right)\phi_{xx}
\\&\quad-\ii\phi_t\phi_{x}\left(\frac{ W^{m-1}}{U}\right)_x.
\end{split}
\end{equation*}
By Young's inequality and \eqref{3.25}, we have
$$\chi\left|\frac{ W^{m-1}V\phi_{x}}{U}\phi_{xx}\right|\leq \frac{W^{m-1}\phi_{xx}^{2}}{4U}+\frac{\chi^2 W^{m-1}V^2\phi_{x}^{2}}{U}
\leq\frac{W^{m-1}\phi_{xx}^{2}}{4U}+\frac{C
W^{m-1}\phi_{x}^{2}}{U},$$
$$\chi\left| W^{m-1}\psi_{x}\phi_{xx}\right|\leq\frac{W^{m-1}\phi_{xx}^{2}}{4U}
+\chi^2 W^{m-1}U\psi_{x}^{2}\leq \frac{
W^{m-1}\phi_{xx}^{2}}{4U}+C\psi_{x}^{2},$$
$$\chi \left| W^{m-1}\phi_{x}\psi_{x}\frac{\phi_{xx}}{U}\right|
\leq\frac{N(t) W^{m-1}\phi_{xx}^{2}}{4U}+\frac{N(t)\chi^2
W^{m-1}\phi_{x}^{2}}{U},$$ and
\[\begin{split}-\phi_t\phi_{x}\left(\frac{ W^{m-1}}{U}\right)_x&=-(\phi_{xx}+\chi V\phi_{x}+\chi U\psi_{x}+\chi\phi_{x}\psi_{x})\frac{\phi_{x}}{U}\left(( W^{m-1})_x-\frac{ W^{m-1}U_{x}}{U}\right)\\
&\leq\frac{W^{m-1}\phi_{xx}^{2}}{4U}+\frac{C
W^{m-1}\phi_{x}^{2}}{U}+C\psi_{x}^{2}.\end{split}\] Thus,
\begin{equation}\label{3.51}
\frac{d}{dt}\int_0^\infty \frac{
W^{m-1}\phi_{x}^{2}}{2U}+\frac{1}{4}\left(1-N(t)\right)\int_0^\infty
\frac{ W^{m-1}\phi_{xx}^{2}}{U}\leq C\left(\int_0^\infty \frac{
W^{m-1}\phi_{x}^{2}}{U}+\int_0^\infty \psi_{x}^{2}\right).
\end{equation}

Multiplying the second equation of \eqref{newfai} by $\psi_{xx}$,
and noting
\begin{equation*}
\begin{split}\psi_{t}\psi_{xx}=(\psi_{t}\psi_{x})_{x}-\left(\frac{\psi_{x}^{2}}{2}\right)_{t},
\end{split}
\end{equation*}
we get
\begin{equation}\label{3.52}
\begin{split}\frac{1}{2}\frac{d}{dt}\int_0^\infty \psi_x^{2}+\varepsilon\int_0^\infty \psi_{xx}^{2}&=
\ii(2\varepsilon V\psi_x- W^{m-1}\phi_x)\psi_{xx}\\&\quad-\ii W^{m-1}(1-e^{-(m-1)\psi})(U+\phi_x)\psi_{xx}-\varepsilon\ii\psi_x^{2}\psi_{xx}\\
&\leq\frac{\varepsilon}{2}(1+N(t))\int_0^\infty
\psi_{xx}^{2}+C\ii\psi_x^{2}+C\ii\frac{ W^{m-1}\phi_x^2}{U}.
\end{split}\end{equation}
Now integrating \eqref{3.51} and \eqref{3.52} in $t$, by
\eqref{3.38}, we get \eqref{3.49} provided that $N(t)\ll1$.
\end{proof}

Applying the same argument as that of Lemma \ref{H2}, we  have
the following $H^2$-estimates. For brevity, we omit the details of the proof.
\begin{lemma}\label{small-H2}
If $N(t)\ll1$, then
\begin{equation*}
\begin{split}\int_0^\infty \left(\phi_{xx}^{2}+\psi_{xx}^{2}\right)
+\int_0^t \int_0^\infty
\left(\phi_{xxx}^{2}+\psi_{xxx}^{2}\right)\leq
C(\|\phi_{0xx}\|^2+\|\phi_{0}\|^2_{1,w_3}+\|\psi_{0}\|^2_{2}).
\end{split}
\end{equation*}
\end{lemma}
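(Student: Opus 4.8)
The plan is to reproduce the time-differentiated energy scheme of Lemma~\ref{H2}, now carried out in the weight $w_3=W^{m-1}/U$ appropriate to $0\le m<1$. First I would bound the time derivatives directly from \eqref{newfai}: since $\phi_t=\phi_{xx}+\chi V\phi_x+\chi U\psi_x+\chi\phi_x\psi_x$ and the second equation gives an analogous expression for $\psi_t$, squaring, integrating, and invoking Lemmas~\ref{lem-small1}--\ref{H1b} yields $\int_0^t\ii(\phi_t^2+\psi_t^2)\le C(\|\phi_0\|_{1,w_3}^2+\|\psi_0\|_1^2)$. Every appearance of the now \emph{unbounded} factor $W^{m-1}$ is controlled by the two structural facts used throughout: $w_3\ge c>0$, so any weighted bound upgrades to the unweighted target, and $UW^{m-1}\le C$ by \eqref{3.25}, whence $W^{2(m-1)}\le C\,w_3$; thus e.g. $\ii W^{2(m-1)}\phi_x^2=\ii(UW^{m-1})\,\tfrac{W^{m-1}\phi_x^2}{U}\le C\ii\tfrac{W^{m-1}\phi_x^2}{U}$ and $\ii U^2W^{2(m-1)}\psi^2\le C\ii\psi^2$, exactly as in the lower-order lemmas.

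Next I would differentiate \eqref{newfai} in $t$ to get \eqref{t-equ} and run the energy estimate on $(\phi_t,\psi_t)$. The one real departure from the case $m\ge1$ is the multiplier: because $W^{m-1}$ in the last term of the second equation of \eqref{t-equ} is no longer bounded, the plain multiplier $\phi_t$ cannot absorb the cross term $\ii W^{m-1}e^{-(m-1)\psi}\phi_{tx}\psi_t$. I would therefore test the first equation of \eqref{t-equ} with $w_3\phi_t$ and the second with $\psi_t$, producing the weighted dissipation $\ii w_3\phi_{tx}^2$ together with $\varepsilon\ii\psi_{tx}^2$, and tracking $\ii w_3\phi_t^2$ and $\ii\psi_t^2$. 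The singular cross term is then integrated by parts in $x$ — the boundary terms vanish because $\phi(0,t)=\psi(0,t)=0$ force $\phi_t(0,t)=\psi_t(0,t)=0$ — trading $\phi_{tx}\psi_t$ for $\psi_{tx}\phi_t$, after which $\ii W^{m-1}\psi_{tx}\phi_t\le\delta\varepsilon\ii\psi_{tx}^2+C\ii W^{2(m-1)}\phi_t^2\le\delta\varepsilon\ii\psi_{tx}^2+C\ii w_3\phi_t^2$ lands precisely on the dissipation and the tracked quantity (the leftover $\ii(W^{m-1})_x\psi_t\phi_t$ is harmless since $(W^{m-1})_x$ is bounded and decaying). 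With $V,U$ bounded, $e^{(1-m)\psi}$ bounded for $N(t)<1$, and $|w_{3x}|\le Cw_3$, every other term of both tested equations is of the form $\delta(\text{dissipation})+C(\text{tracked})$, and a Gronwall argument gives $\sup_{[0,t]}\ii(w_3\phi_t^2+\psi_t^2)+\int_0^t\ii(w_3\phi_{tx}^2+\psi_{tx}^2)\le C(\cdots)$. Feeding this back into \eqref{newfai} recovers $\ii(\phi_{xx}^2+\psi_{xx}^2)$, and differentiating \eqref{newfai} in $x$ and inserting the bounds on $\phi_{tx},\psi_{tx}$ yields $\int_0^t\ii(\phi_{xxx}^2+\psi_{xxx}^2)$, which is the asserted estimate.

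The hard part is reconciling the two cross terms that the unbounded weight forces upon us. Putting $\phi_t$ into the weighted norm $\ii w_3\phi_t^2$ (as the $\psi_t$-equation demands) in turn produces, from the last term of the first equation of \eqref{t-equ}, the term $\chi\ii w_3\phi_x\psi_{tx}\phi_t$, which a naive Young split against $\varepsilon\ii\psi_{tx}^2$ cannot close, since it leaves the uncontrolled $\ii w_3^2\phi_t^2$. I would settle this by using the weighted smallness of the perturbation: from $\|w_3^{1/2}\phi_x\|_{L^\infty}^2\le C\ii w_3\phi_x^2+C\ii w_3\phi_{xx}^2$ (the bound $|w_{3x}|\le Cw_3$ absorbs the weight derivative, and $\ii w_3\phi_{xx}^2$ is itself recovered from \eqref{newfai} in terms of $\ii w_3\phi_t^2$ and controlled data) one gets $\|w_3^{1/2}\phi_x\|_{L^\infty}\le CN(t)$, so that $\chi\ii w_3\phi_x\psi_{tx}\phi_t\le\delta\varepsilon\|\psi_{tx}\|^2+CN(t)^2\ii w_3\phi_t^2$, absorbable for $N(t)\ll1$ (the resulting $CN(t)^2\ii w_3\phi_t^2$ is swallowed by Gronwall). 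This interplay — the $\psi$-equation dictating a weighted norm for $\phi_t$ while the weighted $\phi$-equation then generates a term that only closes through the weighted $L^\infty$ smallness of $\phi_x$ and the inequality $W^{2(m-1)}\le Cw_3$ — is the genuinely new mechanism for $0\le m<1$, absent when $m\ge1$, where $W^{m-1}\le b^{m-1}$ renders all these terms routine; as in the earlier lemmas, $\chi\gg1$ is used to keep the weight exponents and the sign conditions behind the Hardy step of Lemma~\ref{hardy} favorable, while all genuinely lower-order terms (those carrying $\psi_{xx}$, $V^2\psi_x^2$, or $U^2W^{2(m-1)}\psi^2$) go through verbatim as in Lemma~\ref{H2}.
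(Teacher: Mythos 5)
You correctly diagnose the one real obstruction to repeating Lemma \ref{H2} verbatim (the paper omits all details here): the cross term $\ii W^{m-1}e^{-(m-1)\psi}\phi_{tx}\psi_t$ can no longer be closed by Young's inequality because $W^{m-1}$ is unbounded when $0\le m<1$, and the two structural facts you invoke ($w_3\ge c>0$ and $W^{2(m-1)}=(UW^{m-1})\,w_3\le Cw_3$ via \eqref{3.25}) are exactly the right tools. However, your closure has genuine gaps. First, by electing to track the weighted energy $\ii w_3\phi_t^2$ (testing the first equation of \eqref{t-equ} with $w_3\phi_t$), you must control its value at $t=0$: by compatibility, $\phi_t(x,0)=\phi_{0xx}+\chi V\phi_{0x}+\chi U\psi_{0x}+\chi\phi_{0x}\psi_{0x}$, so $\ii w_3\phi_t^2(0)$ requires $\ii w_3\phi_{0xx}^2$, i.e.\ \emph{weighted} $H^2$ data. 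The lemma's right-hand side contains only the unweighted $\|\phi_{0xx}\|^2$, and since $w_3\sim(1+\tfrac{\theta r}{\beta}x)^{(\chi+1-m)/r}$ grows, your scheme proves a strictly weaker statement than claimed. Second, the bound $\|w_3^{1/2}\phi_x\|_{L^\infty}\le CN(t)$ that you use to absorb $\chi\ii w_3\phi_x\psi_{tx}\phi_t$ is not available: $N(t)$ in this regime contains only the unweighted $\|\phi_{xx}\|$, and your repair (recovering $\ii w_3\phi_{xx}^2$ from \eqref{newfai} in terms of $\ii w_3\phi_t^2$) makes the absorption superlinear in the very energy being estimated; closing that loop needs a bootstrap you do not supply. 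A minor slip: $\ii U^2W^{2(m-1)}\psi^2\le C\ii\psi^2$ is not useful, since $\int_0^t\ii\psi^2$ is not dissipation-controlled; the correct chain is $\ii (UW^{m-1})^2\psi^2\le C\ii UW^{m-1}\psi^2$, which \eqref{3.38} does control.

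Both gaps disappear if you keep the \emph{unweighted} tracked energies of Lemma \ref{H2} and let the weight enter only through time-integrable sources. Test \eqref{t-equ} with $\phi_t$ and $\chi\psi_t$, and integrate the dangerous term by parts (boundary terms vanish because $\phi_t(0,t)=\psi_t(0,t)=0$): $\chi\ii W^{m-1}e^{-(m-1)\psi}\phi_{tx}\psi_t=-\chi\ii\bigl(W^{m-1}e^{-(m-1)\psi}\bigr)_x\phi_t\psi_t-\chi\ii W^{m-1}e^{-(m-1)\psi}\phi_t\psi_{tx}$. Here $(W^{m-1})_x$ is bounded by \eqref{3.40} once $\chi\ge 3(1-m)$, and $\chi\ii W^{m-1}|\phi_t\psi_{tx}|\le\delta\varepsilon\chi\ii\psi_{tx}^2+C\ii w_3\phi_t^2$ by $W^{2(m-1)}\le Cw_3$. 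The point is that $\ii w_3\phi_t^2$ now appears with a constant coefficient rather than multiplying the tracked energy, and it is time-integrable with a data bound: from the first equation of \eqref{newfai}, $\int_0^t\ii w_3\phi_t^2\le C\int_0^t\ii\bigl(w_3\phi_{xx}^2+w_3\phi_x^2+UW^{m-1}\psi_x^2\bigr)\le C\bigl(\|\phi_0\|_{1,w_3}^2+\|\psi_0\|_1^2\bigr)$, where the crucial first piece is exactly the weighted dissipation $\int_0^t\ii W^{m-1}\phi_{xx}^2/U$ furnished by Lemma \ref{H1b}, and the others come from Lemma \ref{lem-small1}. The remaining singular term $(1-m)\ii W^{m-1}e^{-(m-1)\psi}\phi_x\psi_t^2$ is handled by $W^{m-1}|\phi_x|\le C w_3^{1/2}|\phi_x|$, Cauchy--Schwarz, and $\|\psi_t\|_{L^\infty}^2\le 2\|\psi_t\|\|\psi_{tx}\|$, which yields $\delta\varepsilon\ii\psi_{tx}^2+CN^{4/3}\ii\psi_t^2$. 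With unweighted tracked energies the initial values satisfy $\ii\phi_t^2(0)+\ii\psi_t^2(0)\le C(\|\phi_{0xx}\|^2+\|\phi_0\|_{1,w_3}^2+\|\psi_0\|_2^2)$ (note $W^{2(m-1)}\phi_{0x}^2\le Cw_3\phi_{0x}^2$), matching the stated data, and the recovery of $\ii(\phi_{xx}^2+\psi_{xx}^2)$ and $\int_0^t\ii(\phi_{xxx}^2+\psi_{xxx}^2)$ from the equations then proceeds exactly as in your outline and in Lemma \ref{H2}.
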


\bigbreak

\begin{remark}
Proposition \ref{mingti-2} follows from Lemmas \ref{lem-small1}, \ref{H1b} and \ref{small-H2}.
\end{remark}

Before we prove our main results, we present a well-known result for convenience.

\begin{lemma}\label{con}
If $f\in W^{1,1}(0,\infty)$ is a nonnegative function, then $f(t) \to 0$ as $t \to \infty$.
\end{lemma}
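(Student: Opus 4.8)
The plan is to first show that $f$ has a finite limit as $t \to \infty$, and then to identify this limit as zero using the integrability of $f$ itself. Since $f \in W^{1,1}(0,\infty)$, after modification on a null set $f$ is absolutely continuous on every bounded subinterval of $[0,\infty)$, so the fundamental theorem of calculus gives $f(t) - f(s) = \int_s^t f'(\tau)\,d\tau$ for all $0 \le s \le t$.

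First I would establish the existence of $\lim_{t\to\infty} f(t)$ via the Cauchy criterion. Because $f' \in L^1(0,\infty)$, for any $\eta > 0$ there is a $T > 0$ such that $\int_T^\infty |f'(\tau)|\,d\tau < \eta$. Then for all $t > s > T$ one has $|f(t)-f(s)| = |\int_s^t f'(\tau)\,d\tau| \le \int_T^\infty |f'(\tau)|\,d\tau < \eta$, so the net $f(t)$ is Cauchy as $t \to \infty$ and the limit $L := \lim_{t\to\infty} f(t)$ exists and is finite.

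It then remains to show $L = 0$, which I would argue by contradiction. Since $f \ge 0$ we have $L \ge 0$; if $L > 0$, there is a $T_0$ with $f(t) \ge L/2$ for all $t \ge T_0$, whence $\int_0^\infty f \ge \int_{T_0}^\infty \tfrac{L}{2}\,dt = \infty$, contradicting $f \in L^1(0,\infty)$. Hence $L = 0$, i.e. $f(t) \to 0$ as $t \to \infty$.

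There is no serious obstacle here; the only point requiring a little care is the justification of the fundamental theorem of calculus on the unbounded interval, which follows from the absolute continuity of $W^{1,1}$ representatives on bounded subintervals together with the $L^1$ control of $f'$ near infinity. The nonnegativity hypothesis is used only to streamline the final step, as it guarantees $L \ge 0$; without it one would instead observe that $L \ne 0$ forces $|f| \ge |L|/2$ for large $t$, again contradicting integrability of $f$.
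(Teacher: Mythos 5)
Your proof is correct. The paper itself offers no proof of this lemma, stating it only as a well-known fact, so there is nothing to compare against; your argument is the standard one: the integrability of $f'$ gives existence of $\lim_{t\to\infty} f(t)$ via the Cauchy criterion (using the absolutely continuous representative of a $W^{1,1}$ function), and the integrability of $f$ forces that limit to be zero. Your closing observation that nonnegativity is inessential is also accurate.
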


\subsection{Proof of main results} Now we are ready to prove Theorem \ref{thm-1} and Theorem \ref{thm2}.

\begin{proof}[Proof of Theorem \ref{thm-1}]
The a priori estimates \eqref{priori} in the case $m\geq1$ and
\eqref{priori-2} in the case $0\leq m<1$ guarantee that $N(t)$ is
small for all $t>0$ if $N(0)$ is small enough. Hence, applying the
standard extension argument, one can obtain the global well-posedness
of system \eqref{newfai} with \eqref{newbound2} and
\eqref{newbound1} in $X(0,\infty)$ if $m\geq1$ and in $Y(0,\infty)$
if $0\leq m<1$. In view of \eqref{decomposition}, system
\eqref{newuv}-\eqref{bound2} has a unique global solution
$(u,v)(x,t)$ satisfying \eqref{regularity} and \eqref{2.18},
respectively. Next we proceed to prove the $L^\infty$ convergence \eqref{asym} and $L^1$ convergence \eqref{L1}. We consider the case $m\geq 1$ first. From the estimates \eqref{priori} and \eqref{priori-2}, we claim that
\begin{equation}\label{ww}
\|\phi_x(\cdot,t)\|+\|\psi_x(\cdot,t)\|\rightarrow0 \text{ as } t\rightarrow +\infty.
\end{equation}
Indeed to prove \eqref{ww}, we just need to verify that $\|\phi_x(\cdot,t)\| \in W^{1,1}(0,\infty)$ and $\|\psi_x(\cdot,t)\| \in W^{1,1}(0,\infty)$ from Lemma \ref{con}. We first prove the former one:  $\|\phi_x(\cdot,t)\| \in W^{1,1}(0,\infty)$. From \eqref{Un} and Lemma \ref{lem-large1}, one has
\begin{equation}\label{con0}
\int_0^\infty \int_0^\infty \phi_x^2 \leq \bar{u} \int_0^\infty \int_0^\infty \frac{\phi_x^2}{U} <\infty.
\end{equation}
Moreover from the results of Proposition \ref{global-existence} along with the Sobolev inequality, we have $\|\psi_x\|_{L^\infty} \leq c_0$ for some positive constant $c_0$. Then using the first equation of \eqref{newfai} and positiveness of $w_1$ and $w_2$ (see \eqref{w}), we can find positive constant $c_i (i=1,2,3,4$) such that  \begin{eqnarray}\label{con1}
\begin{aligned}
\frac{d}{dt} \int_0^\infty \phi_x^2 &=-\int_0^\infty \phi_{xx} \phi_t\\
&=\int_0^\infty \phi_{xx}(\phi_{xx}+\chi V \phi_x +\chi U \psi_x+ \chi \phi_x \psi_x)\\
&\leq c_1 \int_0^\infty \|\phi_x\|_{1,w_1}^2 +c_2\int_0^\infty \|\psi_x\|_{w_2}^2+c_3 \|\psi_x\|_{L^\infty} \int_0^\infty (\phi_x^2+\phi_{xx}^2)\\
& \leq c_4 \int_0^\infty \|\phi_x\|_{1,w_1}^2 +c_2\int_0^\infty \|\psi_x\|_{w_2}^2,
\end{aligned}
\end{eqnarray}
where we have used the uniform boundedness of $U(x)$ and $V(x)$. Then we integrate \eqref{con1} on both sides with respect to $t$ and use \eqref{priori} to get
\begin{eqnarray*}\label{con2}
\begin{aligned}
\int_0^\infty \frac{d}{dt} \int_0^\infty \phi_x^2  \leq c_4 \int_0^\infty \int_0^\infty \|\phi_x\|_{1,w_1}^2 +c_2\int_0^\infty \int_0^\infty \|\psi_x\|_{w_2}^2<\infty
\end{aligned}
\end{eqnarray*}
which together with \eqref{con0} implies that $\|\phi_x(t)\| \in W^{1,1}(0,\infty)$. Then from Lemma \ref{con}, it follows that $\|\phi_x(\cdot,t)\| \text{ as } t\to+\infty$. By similar arguments, we have $\|\psi_x(\cdot,t)\| \text{ as } t\to+\infty$. Therefore the claim \eqref{ww} is proven.

By Cauchy-Schwarz inequality and \eqref{priori}, we find
\[
\begin{split}
\phi_x^2(x,t)=2\left|\int_{0}^x\phi_x\phi_{xx}(y,t)dy\right|
             \leq 2\left(\int_0^\infty\phi_x^2dy\right)^{1/2}\left(\int_0^\infty\phi_{xx}^2dy\right)^{1/2}
             \to 0
\end{split}
\]
as $t\to+\infty$. This implies $\sup\limits_{x\in\R_+}|\phi_x(x,t)|\to0 \text{ as } t\to+\infty$. Similarly, we can show that
\begin{equation*}
\sup\limits_{x\in\R_+}|\psi_x(x,t)|\to0 \text{ as } t\to+\infty.
\end{equation*}
which gives the convergence \eqref{asym}.

Next we prove the $L^1$ convergence. For the case $m\geq 1$, with Lemma \ref{H1}, we find a constant $c_5>0$ depending upon initial value only such that
\begin{equation}\label{L1e1}
\int_0^\infty \frac{\phi_x^2}{U}\leq c_5.
\end{equation}
Next using the fact $W^{1-m} b^{1-m} \geq 1$ (see \eqref{w}), we have from \eqref{newe} and Lemma \ref{lem-large1} that
\begin{eqnarray}\label{L1e2}
\begin{aligned}
\int_0^\infty \bigg(\frac{d}{dt}\int_0^\infty \frac{\phi_x^2}{U}\bigg) &\leq C \bigg(\int_0^\infty \int_0^\infty\frac{\phi_x^2}{U}+ \int_0^\infty \int_0^\infty\ \psi_x^2 \bigg)\\
&\leq C \bigg(\int_0^\infty \int_0^\infty\frac{\phi_x^2}{U}+ b^{1-m} \int_0^\infty \int_0^\infty\ W^{1-m}\psi_x^2 \bigg) \leq c_6
\end{aligned}
\end{eqnarray}
where $c_6>0$ is a constant depending on initial value only.

Then the combination of \eqref{L1e1} and \eqref{L1e2}, along with Lemma \ref{con}, gives
\begin{equation*}
\int_0^\infty \frac{\phi_x^2}{U} \to 0 \ \text{as} \ t \to \infty,
\end{equation*}
which thus with the help of H\"{o}lder inequality yields
$$
\int_0^\infty |\phi_x| \leq \left( \int_0^\infty \frac{\phi_x^2}{U}\right)^{1/2} \left( \int_0^\infty U \right)^{1/2} \ \to 0 \ \text{as} \ t \to \infty
$$
due to the fact that $U\geq 0$ is integrable over $(0,\infty)$. This gives the $L^1$ convergence \eqref{L1}. Finally  analogous arguments show the same result for the case $0<m<1$. Then the proof of Theorem \ref{thm-1} is complete.
\end{proof}

\begin{proof}[Proof of Theorem \ref{thm2}]
Since the transformed system \eqref{newuv} and the original Keller-Segel system \eqref{KS} share the same solution component $u$, it remains only to pass the results from $v$ to $w$ to complete the proof of  Theorem \ref{thm2}. By \eqref{decomposition} and Theorem
\ref{thm-1}, we get the regularity of $w_x/w-W_x/W$. We proceed to prove the results for $w-W$. Set $\xi:=w- W$. By
\eqref{trans} and \eqref{decomposition}, \[ W=b e^{-\int_0^xV(y)dy}
\text{ and } w(x,t)=b e^{-\int_0^xv(y,t)dy}=b
e^{-\int_0^x(\psi_x+V)dy}=e^{-\psi}W.\] Thus, $\xi$ satisfies
\begin{equation}\label{3.53}
\begin{split}
\xi_t-\varepsilon\xi_{xx}=UW^m-uw^m=U
W^m(1-e^{-m\psi})-W^m\phi_xe^{-m\psi},
\end{split}\end{equation}
with initial and boundary conditions
\begin{equation*}
 \xi(x,0)=\xi_{0}(x), \ \xi(0,t)=\xi(+\infty,t)=0.
\end{equation*}
By Taylor expansion, since $\|\psi(\cdot,t)\|_{L^\infty}\leq
N(t)\ll1$, it follows
\[|e^{-m\psi}-1|=|m\psi+\sum_{n=2}^{\infty}\frac{(-m)^{n}\psi^{n}}{n!}|\leq C|\psi|, \text{ and } e^{-m\psi}\leq C.\]
Multiplying \eqref{3.53} by $\xi$ and using Young's inequality, we
have
\begin{equation*}
\begin{split}\frac{1}{2}\frac{d}{dt}\int_0^\infty \xi^{2}+\varepsilon\int_0^\infty \xi_{x}^{2}
&=\ii W^m[U(1-e^{-m\psi})-\phi_xe^{-m\psi}]\xi\\
&\leq h\ii  W^{m-1}U\xi^2+\frac{C}{2h}\ii\left(U W^{m+1}\psi^2+\frac{ W^{m+1}\phi_x^2}{U}\right)\\
&\leq\frac{2h\beta b^{m-1}}{r^2}\ii\xi_{x}^{2}+C\left(\ii
U\psi^2+\ii\frac{\phi_x^2}{U}\right),
\end{split}\end{equation*}
where $h$ is a small constant, and we have used Young's inequality
in the first inequality, and \eqref{3.25}, Lemma \ref{hardy} and $
W^{m+1}\leq b^{m+1}$ in the second inequality. Integrating this
inequality in $t$, taking $h=\frac{\varepsilon r^2}{4\beta
b^{m-1}}=\frac{(\chi+m-1)^2}{16(\chi+m+1)}$, and using Lemmas
\ref{lem-large1} and \ref{lem-small1}, we have
\begin{equation}\label{3.54}
\begin{split}
\int_0^\infty \xi^{2}+\varepsilon\int_0^t\int_0^\infty
\xi_{x}^{2}\leq\int_0^\infty \xi_0^{2}
+C(\|\phi_{0}\|^2_{w_i}+\|\psi_{0}\|^2_{w_j}).
\end{split}\end{equation}
Here $w_i=\frac{1}{U}$, $w_j= W^{1-m}$ if $m\geq1$, and $w_i=\frac{
W^{m-1}}{U}$, $w_j=1$ if $0\leq m<1$.

To estimate the first order derivative of $\xi$, we multiply
\eqref{3.53} by $\xi_{xx}$ to get
\begin{equation*}
\begin{split}\frac{1}{2}\frac{d}{dt}\int_0^\infty \xi_x^{2}+\varepsilon\int_0^\infty \xi_{xx}^{2}
&=\ii W^m[U(1-e^{-m\psi})-\phi_xe^{-m\psi}]\xi_{xx}\\
&\leq \frac{\varepsilon}{2}\ii \xi_{xx}^2+\frac{C}{2\varepsilon}\ii\left(U^2 W^{2m}\psi^2+ W^{2m}\phi_x^2\right)\\
&\leq\frac{\varepsilon}{2}\ii \xi_{xx}^2+C\left(\ii U\psi^2+\ii
\phi_x^2\right),
\end{split}\end{equation*}
where we have used $ W^{2m}U^2\leq\frac{b^{2m}\theta^4}{4\beta^2}$
and $ W^{2m}\leq b^{2m}$. Thus, integrating this inequality in $t$
and using Lemmas \ref{lem-large1} and \ref{lem-small1}, we have
\begin{equation}\label{3.55}
\begin{split}
\int_0^\infty \xi_x^{2}+\varepsilon\int_0^t\int_0^\infty
\xi_{xx}^{2}\leq\int_0^\infty \xi_{0x}^{2}
+C(\|\phi_{0}\|^2_{w_i}+\|\psi_{0}\|^2_{w_j}).
\end{split}\end{equation}
By \eqref{3.54} and \eqref{3.55}, one can see that
$\|\xi_x\|\rightarrow0$ as $t\rightarrow\infty$. Then by
Cauchy-Schwarz inequality, we get
\[\xi^2(x,t)=2\int_0^x\xi\xi_x\leq2\left(\ii\xi^2\right)^{\frac{1}{2}}\left(\ii\xi_x^2\right)^{\frac{1}{2}}\leq C\|\xi_x\|.\]
Hence,
\[\sup\limits_{x\in\R_+}|\xi(x,t)|\leq C\|\xi_x\|\rightarrow0 \text{ as }t\rightarrow\infty\]
which completes the proof of Theorem \ref{thm2}.
\end{proof}

\section*{Acknowledgements}
JAC was partially supported by the EPSRC grant EP/P031587/1.
JL's work is partially supported by the National Science
Foundation of China (No. 11571066). He is also grateful for the
hospitality of Hong Kong Polytechnic University where part of this
work was done. The research of ZW is supported by the Hong Kong
RGC GRF grant No. PolyU 153032/15P (Project ID P0005368).

\setlength{\bibsep}{0.5ex}
\bibliography{rf}

\bibliographystyle{plain}

\end{document}